\newcommand{\dbtilde}[1]{\accentset{\approx}{#1}}
\newcommand{\ZZ}{\overset \longrightarrow{ Z_{\mathfrak p, \mathfrak q}^\mu}}
\newcommand{\abs}[1]{\left \lvert#1 \right \rvert}
\newcommand{\norm}[1]{\left \lVert #1 \right \rVert}
\newcommand{\T}[1]{\textup{#1}}
\newcommand{\id}{\textup{id}}
\newcommand{\ad}{\textup{ad}}
\newcommand{\diag}{\textup{diag}}
\newcommand{\e}{\varepsilon}
\newcommand{\C}{\mathbb C}
\newcommand{\N}{\mathbb N}
\newcommand{\K}{\T{K}}
\newcommand{\cG}{\mathcal G}
\newcommand{\cR}{\mathcal R}
\newcommand{\cO}{\mathcal O}
\newcommand{\cZ}{\mathcal Z}
\newcommand{\cF}{\mathcal F}
\newcommand{\cstar}{\mathrm C^\ast}
\newcommand{\Star}{{}^\ast}
\newcommand{\bb}[1]{\mathbb #1}
\newcommand{\mf}[1]{\mathfrak #1}
\newtheorem{theorem}{Theorem}[section]
\newtheorem{lemma}[theorem]{Lemma}
\newtheorem{prop}[theorem]{Proposition}
\newtheorem{cor}[theorem]{Corollary}
\theoremstyle{definition}
\newtheorem{defn}[theorem]{Definition}
\newtheorem{example}[theorem]{Example}
\newtheorem{notation}[theorem]{Notation}
\newtheorem{convention}[theorem]{Convention}
\newtheorem{remark}[theorem]{Remark}
\newtheorem{note}[theorem]{}
\numberwithin{equation}{section}
\title{The Jiang-Su algebra is strongly self-absorbing revisited}
\author{Andr{\'e} Schemaitat}
\thanks{Supported by the Deutsche Forschungsgemeinschaft (DFG) under Germany’s Excellence Strategy - EXC 2044-390685587  Mathematics M{\"u}nster:  Dynamics - Geometry - Structure and by SFB 878.}
\begin{document}

\begin{abstract}
	We give a shorter proof of the fact that the Jiang-Su algebra is strongly self-absorbing. This is achieved by introducing and studying so-called unitarily suspended endomorphisms of generalized dimension drop algebras. Along the way we prove uniqueness and existence results for maps between dimension drop algebras and UHF-algebras.
\end{abstract}

\maketitle

\section{Introduction}

In Elliott's classification program for nuclear $\cstar$-algebras, a particularly prominent role is played by strongly self-absorbing $\cstar$-algebras (see \cite{R}, \cite{WELL}). A separable and unital $\cstar$-algebra $D \ncong \bb C$ is called \textit{strongly self-absorbing} if there exists a $\Star$-isomorphism $D \to D \otimes D$, which is approximately unitarily equivalent to the first factor embedding $\id_D \otimes 1_D$ (\cite{TW}). Strongly self-absorbing $\cstar$-algebras have approximately inner flip and hence are simple and nuclear (\cite{ER}). The list of known strongly self-absorbing $\cstar$-algebras is quite short and consists of the Cuntz-algebras $\cO_2, \cO_\infty$, UHF-algebras of infinite type, tensor products of UHF-algebras of infinite type with $\cO_\infty$ and the Jiang-Su algebra $\cZ$.

\par The Jiang-Su algebra $\cZ$ has been introduced by Jiang and Su in their remarkable paper \cite{JS}. For reasons we explain below, it is today one of the most natural and important objects in the classification theory of $\cstar$-algebras. Inspired by the work of Connes on the uniqueness of the hyperfinite II$_1$-factor $\cR$, Jiang and Su already observed that $\cZ$ is strongly self-absorbing (\cite[Proposition 8.3, Corollary 8.8]{JS}). Before we discuss their proof of this fact in more detail, let us briefly put $\cZ$ into more context.

\par For the classification of $\cstar$-algebras, by what is today known as the Elliott invariant, $\cZ$-stability is of fundamental importance. The Elliott invariant of a unital $\cstar$-algebra $A$ is given by the six-tupel 
$$
	(\K_0(A),\K_0(A)_+,[1_A]_0, \K_1(A), T(A), r_A).
$$
The map $r_A$ is a natural pairing between K-theory and the trace simplex $T(A)$. Since $\cZ$ has a unique trace and is KK-equivalent to the complex numbers $\bb C$, in fact it has the same Elliott invariant as $\bb C$, one might think of $\cZ$ as an infinite dimensional version of the complex numbers.  Under a natural restriction on the K-groups of a simple $\cstar$-algebra $A$, the Elliott invariant of $A$ and $A \otimes \cZ$ are isomorphic. This suggests that classification is in general only possible up to $\cZ$-stability (cf.~\cite[Definition 2.6]{WELL}).

\par Intimately related is the Toms-Winter conjecture (cf.~\cite[Conjecture 9.3]{WZDIMNUC}), which predicts that the following  regularity conditions for a (unital) separable, simple,  nuclear and infinite dimensional $\cstar$-algebra $A$ are equivalent:
\begin{enumerate}[label=(\roman*)]
	\item $A$ has finite nuclear dimension,
	\item $A$ is $\cZ$-stable (i.e. $A \cong A \otimes \cZ$),
	\item $A$ has strict comparison.
\end{enumerate} 
The implication (ii) $\Rightarrow$ (iii) is proven in  \cite[Corollary 4.6]{RSTABLE}, where the strong self-absorption of $\cZ$ is used in an essential way. Just recently, in \cite{CETWW}, the nuclear dimension of a $\cstar$-algebra like $A$ has been computed to be zero or one in the $\cZ$-stable case and infinite otherwise. Together with a result by Winter (\cite[Corollary 6.3]{WPURE}) this proves the equivalence between (ii) and (i). By previous results of many hands and decades of work, one of the most outstanding results in  the classification of $\cstar$-algebras follows: unital, separable, simple, nuclear and $\cZ$-stable $\cstar$-algebras satisfying the UCT are classified by the Elliott invariant (\cite[Corollary D]{CETWW}). In particular unital, separable, simple and nuclear $\cstar$-algebras satisfying the UCT are classified up to $\cZ$-stability. The UCT is Rosenberg and Schochet's universal coefficient theorem (\cite{UCT}), which roughly says that the  KK-theory can be computed in terms of K-theory. A $\cstar$-algebra satisfies the UCT precisely if it is KK-equivalent to a commutative $\cstar$-algebra.

\par Getting back to the strong self-absorption of $\cZ$, let us discuss the proof of Jiang and Su. In \cite[Section 2]{JS} the algebra $\cZ$ is constructed as an inductive limit of dimension drop algebras $Z_{p_n,q_n}$ with $p_n,q_n$ coprime and tending to infinity, where $Z_{p,q}$ is the $\cstar$-algebra of continuous functions $f \colon [0,1] \to M_p \otimes M_q$ such that $f(0) \in M_p \otimes 1_q$ and $f(1) \in 1_p \otimes M_q$. Fundamental to their proof that $\cZ$ is strongly self-absorbing  is a classification machinery for $\Star$-homomorphisms between dimension drop algebras (\cite[Corollary 5.6, Theorem 6.2]{JS}). These results rely on a careful analysis of maps between K-homology induced by morphisms between dimension drop algebras. 
\par Although the notion of a strongly self-absorbing $\cstar$-algebra was introduced first in \cite{TW}, Jiang and Su already proved two abstract properties of $\cZ$, showing that $\cZ$ is strongly self-absorbing:
\begin{enumerate}
	\item $\cZ$ has approximately inner half flip, i.e.~the first and the second factor embedding of $\cZ$ into $\cZ \otimes \cZ$ are approximately unitarily equivalent,
	\item there exists a $\Star$-homomorphism $\varphi \colon \cZ \otimes \cZ \to \cZ$ such that 
	$\varphi \circ (\id_\cZ \otimes 1_\cZ)$ is approximately inner.
\end{enumerate}
The first property is proven in \cite[Proposition 8.3]{JS}, only using  basic properties of the construction of $\cZ$. The second property is proven in \cite[Proposition 8.5]{JS} and relies heavily on the classification machinery just mentioned. Jiang and Su first show that there exists a $\Star$-homomorphism $\psi \colon \cZ \otimes \cZ \to B$, where $B$ is a simple and unital inductive limit of dimension drop algebras having the same Elliott invariant as $\cZ$. Now their existence result (\cite[Theorem 6.2]{JS}) provides a unital $\Star$-homomorphism $B \to \cZ$. By composing with $\psi$ one gets the desired map $\varphi$. Using that any unital $\Star$-endomorphism of $\cZ$ is approximately inner (\cite[Theorem 7.6]{JS}), it follows that $\varphi \circ (\id_\cZ \otimes 1_\cZ)$ is approximately inner. Here  the  uniqueness result for maps between dimension drop algebras is used once more.

\par The goal of this paper is to prove that the Jiang-Su algebra is strongly self-absorbing in an as self-contained and elementary  as possible way. To do so, we use a different picture of $\cZ$. Today, there are many descriptions and characterizations of $\cZ$, for example as universal $\cstar$-algebra (\cite{JW}) or as the initial object in the category of strongly self-absorbing $\cstar$-algebras (\cite{WSSA}). For us  however,  a construction of $\cZ$ as an inductive limit of generalized dimension drop algebras will be most suitable. A generalized dimension drop algebra $Z_{\mf p, \mf q}$ is defined just as before, but now with $M_p$ and $M_q$ replaced by UHF-algebras $M_\mf p$ respectively $M_\mf q$, associated to supernatural numbers $\mf p$ and $\mf q$. More precisely,  R{\o}rdam and Winter show in   \cite[Theorem 3.4]{RW} that for all coprime supernatural numbers of infinite type $\mf p$ and $\mf q$, there exists a trace collapsing\footnote{Trace collapsing means that $\tau \circ \mu = \tau' \circ \mu$, for all tracial states $\tau$ and $\tau'$ on $Z_{\mf p, \mf q}$.} $\Star$-homomorphism $\mu \colon Z_{\mf p, \mf q} \to Z_{\mf p, \mf q}$ and that the stationary inductive limit of $Z_{\mf p, \mf q}$ along $\mu$ is isomorphic to $\cZ$, i.e. 
\[
	\xymatrix{
	 Z_{\mf p, \mf q}  \ar[r]^-\mu &  Z_{\mf p, \mf q} \ar[r]^-\mu &  Z_{\mf p, \mf q}  \ar[r]^-\mu &  \cdots \ar[r] &  \cZ
	}.
\]
For classification, this picture of $\cZ$ has already been proven to be very useful, most notably in  \cite{WELL}. Also the main theorem of \cite[Theorem 29.5]{GLN} relies on this picture of $\cZ$. However, showing that the limit of $Z_{\mf p, \mf q}$ along $\mu$  is strongly self-absorbing still requires comparing it to the original construction of Jiang and Su.

\par Using the picture of R{\o}rdam and Winter and ideas of Winter (cf.~\cite[Section 4, Definition 4.2]{WELL}),  we introduce so-called \textit{unitarily suspended} $\Star$-endomorphisms of generalized dimension drop algebras (Definition \ref{defn:unitarily-suspeded}). Built into the definition is a unitary path, which in some sense untwists the trace collapsing $\Star$-endomorphism. This  allows to run an approximate intertwining that produces an isomorphism $\cZ \to \cZ \otimes \cZ$, which is approximately unitarily equivalent to the first factor embedding. Showing that such  unitarily suspended $\Star$-endomorphisms exist and running the approximate intertwining then provides a new way to show that $\cZ$ is strongly self-absorbing (Theorem \ref{th:SSA}, \ref{thm:structure}). This  also confirms that the difficulty of showing that $\cZ$ is strongly self-absorbing lies between that for UHF-algebras of infinite type and $\mathcal O_2$ or $\mathcal O_\infty$ (cf.~\cite[{}5.2]{JW}).

	\par Finally, I want to highlight that in Theorem \ref{th:SSA} no classification theory beyond the classification of UHF-algebras is used. In particular, we do not rely on the classification of morphisms between dimension drop algebras or K-homological results. This is possible by proving existence and uniqueness results for the very specific class of maps between generalized prime dimension drop algebras and certain UHF-algebras.
	
\section*{Acknowledgment}

This work is part of my doctoral research. I thank my supervisor Wilhelm Winter for suggesting the topic and for the many enlightening discussions. Furthermore, I thank James Gabe, Christopher Schafhauser and especially Stuart White for their very helpful feedback.

\section{Preliminaries}

In this section we recall some basic facts about c.p.c. order zero maps, generalized dimension drop algebras, strongly self-absorbing $\cstar$-algebras and ultrapowers. See \cite{Order-Zero}, \cite{RW} and \cite{TW} for more.

\subsection{Order zero maps}
	\label{def:cpc-order-zero}
	Let $A$ and $B$ be $\cstar$-algebras.  We say that a linear and completely positive contractive (c.p.c.) map $\varphi \colon A \to B$ is \textit{order zero} if $\varphi(x)\varphi(y) = 0$, whenever $xy = 0$. In this case, there exists a $\Star$-homomorphism  $\pi_\varphi \colon A \to B^{**}$, called the \textit{supporting $\Star$-homomorphism}, such that $\varphi(\cdot) = h_\varphi \pi_\varphi(\cdot) = \pi_\varphi(\cdot)h_\varphi$, where $h_\varphi$ is a positive contraction in the double dual $B^{**}$. If $A$ is unital, then $h_\varphi = \varphi(1_A)$.  Moreover, a c.p.c. order zero map $\varphi \colon A \to B$  corresponds bijectively to a $\Star$-homomorphism $\overline \varphi \colon C_0(0,1] \otimes A \to B$, where $\overline \varphi(\iota \otimes x) = \varphi(x)$, for $x \in A$. By $\iota$ we denote the canonical generator of $C_0(0,1]$. Similarly, if $\phi \colon C_0(0,1] \otimes A \to B$ is a $\Star$-homomorphism, we denote by $\overline{\phi} \colon A \to B$ the associated c.p.c. order zero map, given by $\overline \phi (a) = \phi(\iota \otimes a)$.

\subsection{Generalized dimension drop algebras}
	\label{def:dimension-drop}
	For (possibly finite) supernatural numbers $\mf p$ and $\mf q$ we define
$$
		Z_{\mf p, \mf q} \coloneqq \left \{f \in C([0,1], M_\mathfrak p \otimes M_\mathfrak q) : \begin{array}{l} 	
			f(0) \in M_\mathfrak p \otimes 1_\mathfrak q , \\
			f(1) \in 1_\mathfrak p \otimes M_\mathfrak q 
			\end{array}
			\right  \}.
	$$ 
	By $M_\mf p$ and $M_\mf q$ we denote the UHF-algebras associated to the respective supernatural numbers. We call $Z_{\mf p, \mf q}$ a \textit{generalized dimension drop} algebra. Note that $Z_{\mf p, \mf q}$ is canonically generated by  two commuting cones over $M_{\mf p}$ respectively $M_{\mf q}$, which we will denote by 
	$$
		\grave \imath \colon  C_0[0,1) \otimes M_\mf p \hookrightarrow Z_{\mf p, \mf q} \quad  \text{and} \quad 
		\acute \imath \colon C_0(0,1] \otimes M_\mf q \hookrightarrow Z_{\mf p, \mf q}.
$$
	 Whenever  $\varphi \colon Z_{\mf p, \mf q} \to A$ is a $\Star$-homomorphism, we write
	$$
		\grave \varphi \coloneqq  \varphi \circ \grave \iota  \quad \text{and} \quad \acute \varphi \coloneqq \varphi \circ \acute \iota.
	$$
	A unital $\Star$-homomorphism $\varphi \colon Z_{\mf p, \mf q} \to A$ is called \textit{standard}, if $\tau \circ \varphi = \tau_{\T{Leb}}$, for all $\tau \in T(A)$ (cf.\,\cite[Theorem 2.1]{RSTABLE}). Here $\tau_{\T{Leb}}$ is the trace on $Z_{\mf p, \mf q}$ induced by the Lebesgue measure.
\par Furthermore, if $\psi \colon Z_{\mf p , \mf q} \to Z_{\mf p, \mf q}$ is a $\Star$-homomorphism we denote for $s \in [0,1]$ the map
$$
	\psi_s \colon Z_{\mf p, \mf q} \to M_\mf p \otimes M_\mf q : f \mapsto \psi(f)(s).
$$

\subsection{Strongly self-absorbing $\cstar$-algebras.}
	\label{def:ssa}
 	A unital and separable $\cstar$-algebra $D \ncong \C$ is called \textit{strongly self-absorbing} if there exists a $\Star$-isomorphism $\varphi \colon D \to D \otimes D$ and a sequence of unitaries $(u_n)_{n=1}^\infty$ in $ D \otimes D$, such that 
 	$$
 		\varphi(x) = \lim_{n \to \infty} u_n(x \otimes 1_D)u_n^* \qquad ( x \in D) .
 	$$	
 	\par The most basic examples of strongly self-absorbing $\cstar$-algebras are UHF-algebras of infinite type, i.e. those UHF-algebras $M_\mf  n$, such that $\mf n^2 = \mf n$ and $\mf n \neq 1$. In this case the proof of strong self-absorption reduces to basic linear algebra.
 	
\subsection{Ultrapowers}
Let $\omega$ be a fixed free ultrafilter on $\N$. We denote the \textit{uniform ultrapower} of $A$ by 
$$
 A_\omega \coloneqq \ell^\infty(A) / \{ (x_n)_{n=1}^\infty : \lim_{n \to \omega} \norm {x_n} = 0 \}.
$$  
 The equivalence class of a sequence $(x_n)_{n =1}^\infty \in \ell^\infty(A)$ will be denoted by $\big [ (x_n)_{n=1}^\infty \big ] \in A_\omega$.  By $\iota_\omega$ we denote the diagonal inclusion $A \hookrightarrow A_\omega$. Often it will be  convenient to identify $A$ in this way  with a subalgebra of $A_\omega$.

\section{Uniqueness}

The main goal of this section is to prove an asymptotic uniqueness result for standard $\Star$-homomorphisms from $Z_{\mf p, \mf q }$ into $M_\mf p \otimes M_\mf q$ (Theorem \ref{th:uniqueness}). This is done by first proving an approximate uniqueness result for these maps (Lemma \ref{lem:aue-dimension-drop}) and a so-called Basic Homotopy Lemma (Lemma \ref{lem:BHL-dimension-drop}). The key  ingredients to this section are a version of the Basic Homotopy Lemma for positive contractions (\cite[Lemma 5.1]{BEEK}) and certain unitaries associated to order zero maps, which allow to twist non-central elements of the domain into an extra tensor factor of the codomain (see \ref{rotation-unitaries}).

\begin{defn}
	\label{def:equivalences}
	Let $A$ and $B$ be $\cstar$-algebras with $A$ separable and $B$ unital. For $\Star$-homomorphisms $\varphi, \psi \colon A \to B$ we define the following equivalence relations:
	\begin{enumerate}[label=(\roman*)]
		\item \textit{Unitary equivalence} $\sim_{\T u}$ : There exists a unitary $u \in B$ with 
		$$
			u\varphi(a)u^* = \psi(a) \qquad (a \in A).
		$$
		\item \textit{Approximate unitary equivalence} $\approx_{\T u}$ : There exists a sequence $(u_n)_{n=1}^\infty$ of unitaries in $B$ with
		 $$
			\lim_{n \to \infty} \norm{u_n \varphi(a) u_n^* - \psi(a)} = 0 \qquad (a \in A).		
		$$
		 \item \textit{Strong asymptotic unitary equivalence} $\sim_{\T{asu}}$ : There exists a unitary path $(u_t)_{t \in [0,1)}$ in $B$ such that $u_0 = 1_B$ and
		$$
			\lim_{t \to 1} \norm{u_t \varphi(a) u_t^* - \psi(a)}  = 0 \qquad (a \in A).
		$$ 
		\item \textit{Murray-von Neumann equivalence} $\sim$ :  There exists a contraction $v \in B$ with 
		$$
			v\varphi(a)v^* = \psi(a),  \quad  v^*\psi(a)v = \varphi(a) \qquad (a \in A).
		$$
		\item \textit{Approximate Murray-von Neumann equivalence} $\approx$ : There exists a sequence $(v_n)_{n=1}^\infty$ of contractions in $B$ with 
		\begin{align*}
			\lim_{n \to \infty} \norm{v_n \varphi(a)v_n^* - \psi(a)} = 0  \qquad (a \in A), \\
			 \lim_{n \to \infty} \norm{v_n^*\psi(a)v_n - \varphi(a)} = 0 \qquad (a \in A).
		\end{align*}
	\end{enumerate}
	These equivalence relations also make sense for c.p.c. order zero maps, as they are identified with $\Star$-homomorphisms on the corresponding cone (see \ref{def:cpc-order-zero}). The same applies to positive contractions, which correspond to c.p.c. order zero maps on $\bb C$.
\end{defn}

\begin{remark}
	\label{rem:ultrapower}
	 (\cite[Lemma 6.2.5]{R}) Let $A$ and $B$ be $\cstar$-algebras with $A$ separable and $B$ unital. For $\Star$-homomorphisms $\varphi,\psi \colon A \to B$ we have 
	$$
		\varphi \approx_{\T u} \psi  \iff  \iota_\omega \circ \varphi \sim_{\T u} \iota_\omega \circ \psi \iff \iota_\omega \circ \varphi \approx_{\T u} \iota_\omega \circ \psi.
	$$
	Furthermore, if $\varphi, \psi \colon A \to B_\omega$ are $\Star$-homomorphisms, a reindexing argument shows that 
	$$
		\varphi \approx_{\T u} \psi \quad  \iff \quad \varphi \sim_{\T u} \psi.
	$$
\end{remark}

The next lemma is inspired by the classification of $\Star$-homomorphism from finite dimensional $\cstar$-algebras into certain unital $\cstar$-algebras (cf.~\cite[Lemma 1.3.1]{R}).

\begin{lemma}
	\label{lem:cpc-order-zero-e11}
	Let $A$ be a unital $\cstar$-algebra, $p \in \N$ and assume that  $\varphi, \psi \colon M_p \to A$ are c.p.c. order zero maps, such that $\varphi(e_{11}) \approx \psi(e_{11})$. Then $\varphi \approx \psi$. If in addition $A$ has stable rank one, i.e. the set of invertible elements is dense in $A$, then $\varphi \approx_{\T u} \psi$.
\end{lemma}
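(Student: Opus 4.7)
The approach is to adapt the classical intertwiner formula $V = \sum_i \psi(e_{i1}) v \varphi(e_{1i})$ — which works verbatim for $\Star$-homomorphisms when $v$ is a partial isometry matching the support projections of $\varphi(e_{11})$ and $\psi(e_{11})$ — to the c.p.c.\,order zero setting via an order-zero functional calculus cutoff. The main obstacle is that, without such a cutoff, the natural formula produces $\psi(e_{k1}) v_n \varphi(e_{11})^3 v_n^* \psi(e_{1l})$ in place of $\psi(e_{kl})$: the cubic power of $\varphi(e_{11})$ arises because each of the three factors $\varphi(e_{1i}), \varphi(e_{kl}), \varphi(e_{j1})$ contributes a copy of $h_\varphi$, and this cubic power cannot be reduced to a single copy of $\psi(e_{11})$ using only the hypothesis $v_n \varphi(e_{11}) v_n^* \to \psi(e_{11})$. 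The cutoff realigns the powers at the cost of an $\epsilon$-error that is absorbed by a diagonal argument.

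Fix contractions $v_n \in A$ witnessing the hypothesis, and pick a continuous cutoff $f_\epsilon \colon [0,1] \to [0,1]$ with $f_\epsilon(0) = 0$ and $|f_\epsilon(t)^2 t - t| \leq \epsilon$ on $[0,1]$; concretely $f_\epsilon(t) = \min(t/\epsilon, 1)$ works. Order-zero functional calculus produces c.p.c.\,order zero maps $f_\epsilon(\varphi), f_\epsilon(\psi) \colon M_p \to A$ satisfying $f_\epsilon(\varphi)(e_{ij}) = f_\epsilon(h_\varphi) \pi_\varphi(e_{ij}) \in A$, and similarly for $\psi$. Define
\[
V_{n,\epsilon} := \sum_{i=1}^p f_\epsilon(\psi)(e_{i1}) \, v_n \, f_\epsilon(\varphi)(e_{1i}) \in A.
\]
The identity $\sum_i f_\epsilon(\psi)(e_{i1}) f_\epsilon(\psi)(e_{1i}) = f_\epsilon(\psi)(1_{M_p})^2 \leq 1$ and its $\varphi$-analogue, combined with $\|v_n\| \leq 1$, give $\|V_{n,\epsilon}\| \leq 1$ via the standard row/column estimate.

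Using $\varphi(x) = h_\varphi \pi_\varphi(x)$ with $h_\varphi$ commuting with $\pi_\varphi(M_p)$, and the collapsing identity $e_{1i} e_{kl} e_{j1} = \delta_{ik} \delta_{lj} e_{11}$, the double sum reduces to
\[
V_{n,\epsilon} \varphi(e_{kl}) V_{n,\epsilon}^* = f_\epsilon(\psi)(e_{k1}) \, v_n \, g_\epsilon(\varphi)(e_{11}) \, v_n^* \, f_\epsilon(\psi)(e_{1l}),
\]
where $g_\epsilon(\varphi)(e_{11}) := f_\epsilon(h_\varphi)^2 \varphi(e_{11}) \in A$ satisfies $\|g_\epsilon(\varphi)(e_{11}) - \varphi(e_{11})\| \leq \epsilon$ by the estimate on $f_\epsilon$. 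Combined with $v_n \varphi(e_{11}) v_n^* \to \psi(e_{11})$, the middle segment converges to $\psi(e_{11})$ as $n \to \infty$ and $\epsilon \to 0$. The analogous simplification on the outer layer gives $f_\epsilon(\psi)(e_{k1}) \psi(e_{11}) f_\epsilon(\psi)(e_{1l}) = g_\epsilon(\psi)(e_{kl})$ with $\|g_\epsilon(\psi)(e_{kl}) - \psi(e_{kl})\| \leq \epsilon$. A diagonal $\epsilon_n \to 0$ chosen slowly relative to the speed of convergence in the hypothesis therefore produces $V_{n,\epsilon_n} \varphi(e_{kl}) V_{n,\epsilon_n}^* \to \psi(e_{kl})$; a symmetric computation delivers $V_{n,\epsilon_n}^* \psi(e_{kl}) V_{n,\epsilon_n} \to \varphi(e_{kl})$, and hence $\varphi \approx \psi$.

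For the stable-rank-one addendum I would invoke standard cancellation: stable rank one upgrades $\varphi(e_{11}) \approx \psi(e_{11})$ to $\varphi(e_{11}) \approx_{\T u} \psi(e_{11})$, so the $v_n$ above may be chosen unitary. The resulting contractions $V_{n,\epsilon_n}$ then satisfy $V_{n,\epsilon_n}^* V_{n,\epsilon_n} \cdot \varphi(a) \cdot V_{n,\epsilon_n}^* V_{n,\epsilon_n} \to \varphi(a)$, forcing $V_{n,\epsilon_n}^* V_{n,\epsilon_n}$ to act approximately as the identity on the hereditary subalgebra generated by $\varphi(M_p)$; a further application of stable rank one (perturbing $V_{n,\epsilon_n}$ to nearby invertibles in $A$ and taking their unitary parts) produces unitaries $w_n \in A$ with $w_n \varphi(a) w_n^* \to \psi(a)$, giving $\varphi \approx_{\T u} \psi$.
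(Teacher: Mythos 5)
Your argument for the approximate Murray--von Neumann equivalence $\varphi \approx \psi$ is correct and is, in substance, a quantitative version of the paper's ultrapower argument. The paper passes to $A_\omega$, uses the sequential local units $e_\varphi = \big[(h_\varphi^{1/n})_n\big]$ and $e_\psi = \big[(h_\psi^{1/n})_n\big]$ to swallow the stray factors of $h_\varphi$, $h_\psi$, and then the intertwiner $u = \sum_i \pi_\psi(e_{i1})\,e_\psi\,v\,e_\varphi\,\pi_\varphi(e_{1i})$ conjugates $\varphi$ onto $\psi$ exactly inside $A_\omega$; your cutoff $f_\epsilon$ plays the role of those local units, at the price of the $\epsilon$-error and a diagonal argument. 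Same underlying idea, different packaging.

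The stable rank one addendum has a genuine gap. With $v_n$ unitary, the same computation you performed for $V_{n,\epsilon}\varphi(e_{kl})V_{n,\epsilon}^*$ yields
\[
V_{n,\epsilon}^*V_{n,\epsilon} \approx f_\epsilon(h_\varphi)^4\,\pi_\varphi(1_p).
\]
This is close to $1$ after compression to $\overline{\varphi(M_p)A\varphi(M_p)}$, but it is \emph{not} globally close to $1_A$. Indeed whenever $0$ lies in the spectrum of $h_\varphi$ (as in every application in the paper, where $h_\varphi$ is a Lebesgue contraction), one has $\norm{1_A - f_\epsilon(h_\varphi)^4\pi_\varphi(1_p)} = 1$. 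Consequently $V_{n,\epsilon_n}$ is not within small distance of any unitary, and the proposed repair --- perturbing $V_{n,\epsilon_n}$ to nearby invertibles and taking unitary parts --- does not work: if $T$ is invertible with $\norm{T - V}$ small, its unitary part $T\abs{T}^{-1}$ need not be close to $V$, because $\abs{T}$ need not be close to $1$. The paper does not attempt any of this; it simply cites [G, Proposition 3.13], which is precisely the assertion that, in a $\cstar$-algebra of stable rank one, approximate Murray--von Neumann equivalence of c.p.c.\ order zero maps upgrades to approximate unitary equivalence. A proof of that citation needs an idea your sketch does not supply, roughly a supplementary partial intertwiner on the orthogonal complement of the support of $\varphi(1_p)$, produced using stable rank one to match the complements.
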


\begin{proof}
	Assume $\varphi(e_{11}) \approx  \psi(e_{11})$. By a standard argument,  there exists a contraction $v \in A_\omega$ with 
	$$
		v\varphi(e_{11})v^* = \psi(e_{11}), \quad v^*\psi(e_{11})v = \varphi(e_{11}).
	$$
	Consider the elements  $e_\varphi \coloneqq  \big [ (h_\varphi^{\frac 1 n})_{n=1}^\infty \big ] $ and $e_\psi \coloneqq \big  [(h_\psi^{\frac 1 n})_{n=1}^\infty \big ]$ in $A_\omega$, where $h_\varphi$ and $h_\psi$ are as in \ref{def:cpc-order-zero}. The elements $e_\varphi$ and $e_\psi$ act as unit on the image of $\varphi$ respectively $\psi$. Considering $A_\omega \subseteq (A^{**})_\omega$, we furthermore have that $e_\varphi \pi_\varphi (x)$ and $e_\psi \pi_\psi(x)$ are elements in $A_\omega$, for every $x \in M_p$. One then defines 
	$$
		u \coloneqq \sum_{i=1}^p \pi_\psi(e_{i1}) e_\psi v e_\varphi \pi_\varphi(e_{1i}) \in A_\omega
	$$ 
	and computes that 
	$$
		u \varphi(e_{ij}) u^* = \psi(e_{ij}), \quad u^*\psi(e_{ij})u = \varphi(e_{ij}) \qquad ( i,j = 1,2, \cdots, p).
	$$	
	This entails that $\varphi \approx \psi$. The last statement follows from \cite[Proposition 3.13]{G}.
\end{proof}

\begin{defn}
	\label{def:Lebesgue-Contraction}
	Let $A$ be unital $\cstar$-algebra with $T(A) \neq \emptyset$. A positive contraction $h$ in $A$ is called a \textit{Lebesgue contraction} if 
	$$
		\tau(f(h)) = \int_0^1 f(t) \ dt \qquad (f \in C_0(0,1], \ \tau \in T(A)).
	$$	
\end{defn}

By appealing to more sophisticated tools like the Cuntz semigroup, it  follows for example from \cite[Lemma 2.1]{TWW} that the  next lemma holds for a more general codomain, including ultrapowers of UHF-algebras. However, the proof here uses \cite{SK}, which provides a self-contained proof for the case of UHF-algebras.

\begin{lemma}
		\label{lem:!-cpc-order-zero}
		Let $\mf p$ be a supernatural number and let $\varphi, \psi \colon M_\mf p \to U$ be two c.p.c. order zero maps, where $U$ is a $\T{UHF}$-algebra. Assume that $h_\varphi$ and $h_\psi$ are both Lebesgue contractions. Then $\varphi \approx_{\T u} \psi$.
\end{lemma}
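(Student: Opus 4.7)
The plan is to match spectral distributions of the images of minimal projections in the unique trace of $U$, and then invoke Lemma \ref{lem:cpc-order-zero-e11} combined with an approximation along the matrix subalgebras of $M_\mf p$. Write $M_\mf p = \overline{\bigcup_n M_{p_n}}$ with $p_n \in \N$ an increasing sequence of integers. Fix $n$, set $p = p_n$, and let $\{e_{ij}\}_{i,j=1}^{p}$ be matrix units for $M_{p_n}$. I first claim that $\varphi(e_{11})$ and $\psi(e_{11})$ have the same spectral distribution in the unique trace $\tau$ of $U$.

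For the claim, since $h_\varphi$ and $\pi_\varphi(e_{11})$ commute in $U^{**}$ and $\pi_\varphi(e_{11})$ is a projection, $f(\varphi(e_{11})) = f(h_\varphi)\pi_\varphi(e_{11})$ for every $f \in C_0(0,1]$. The minimal projections $e_{11}, \ldots, e_{pp}$ are pairwise conjugate in $M_{p_n}$, so their images under $\pi_\varphi$ are pairwise conjugate in $\pi_\varphi(M_{p_n}) \subseteq U^{**}$ by unitaries that commute with $h_\varphi$. The normal extension of $\tau$ to $U^{**}$ is tracial, so
\[
\tau(f(\varphi(e_{11}))) = \frac{1}{p}\tau\bigl(f(h_\varphi)\pi_\varphi(1_{M_\mf p})\bigr) = \frac{1}{p}\tau(f(h_\varphi)) = \frac{1}{p}\int_0^1 f(t)\,dt,
\]
where the middle equality uses $f(0) = 0$ together with the fact that $\pi_\varphi(1_{M_\mf p})$ is the support projection of $h_\varphi$, and the last equality is the Lebesgue hypothesis. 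The identical computation yields the same answer for $\psi$.

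By \cite{SK}, positive elements of a UHF-algebra with identical spectral distribution in the unique trace are approximately unitarily equivalent, so $\varphi(e_{11}) \approx_{\T u} \psi(e_{11})$ in $U$, and hence $\varphi(e_{11}) \approx \psi(e_{11})$. Since $U$ is AF it has stable rank one, so Lemma \ref{lem:cpc-order-zero-e11} promotes this to $\varphi|_{M_{p_n}} \approx_{\T u} \psi|_{M_{p_n}}$ for every $n$. A standard diagonal choice of unitaries — for increasing finite subsets $F_n \subseteq M_{p_n}$ with dense union in $M_\mf p$, pick $u_n \in U$ so that $\|u_n \varphi(a) u_n^* - \psi(a)\| < 1/n$ for $a \in F_n$ — then delivers $\varphi \approx_{\T u} \psi$. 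I expect the only real obstacle to be the trace computation above: one must carefully pass through $U^{**}$ to use the structure theorem for c.p.c.\ order zero maps, and only then recover a scalar via the normal tracial extension and the Lebesgue hypothesis.
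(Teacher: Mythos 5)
Your proof is correct and follows essentially the same route as the paper's: compute the spectral distribution of $\varphi(e_{11})$ from the Lebesgue hypothesis to conclude it is $\frac 1 p$ of Lebesgue measure, invoke \cite{SK} (real rank zero plus strong comparison by the unique trace) to get $\varphi(e_{11}) \approx_{\T u} \psi(e_{11})$, promote via Lemma \ref{lem:cpc-order-zero-e11} and stable rank one, and finish with an approximation along the finite matrix subalgebras. The only stylistic divergence is in the trace computation: you pass to $U^{**}$, write $f(\varphi(e_{11})) = f(h_\varphi)\pi_\varphi(e_{11})$, conjugate by the partial isometries $\pi_\varphi(u_{i1})$ which commute with $h_\varphi$, and use the normal extension of $\tau_U$ together with the fact that $\pi_\varphi(1)$ is the support projection of $h_\varphi$; the paper instead stays inside $U$ by explicitly producing $x_i = h_\varphi^{1/2}\pi_\varphi(e_{i1}) \in U$ with $x_i^*x_i = \varphi(e_{11})$, $x_ix_i^* = \varphi(e_{ii})$, and computes the trace without mentioning the double dual. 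Both reach $\tau_U(f(\varphi(e_{11}))) = \frac 1 p \int_0^1 f(t)\,dt$; the paper's version is marginally more elementary in that it avoids normal extensions and support projections, while yours makes the role of the structure theorem for c.p.c.\ order zero maps more transparent.
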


\begin{proof}
	By an easy approximation argument one may assume that  $\varphi, \psi \colon M_p \to U$ are two standard c.p.c. order zero maps, where $p \in \N$. Note that the positive contractions $\{\varphi(e_{ii})\}_{i=1}^p$ are mutually orthogonal and equivalent positive elements, in the sense that there exist $x_i \in U$ with $\varphi(e_{11}) = x_i^*x_i$ and $x_ix_i^* = \varphi(e_{ii})$. Denote by $\tau_U$ the unique trace on $U$. Using that the $\varphi(e_{ii})$ are mutually equivalent, it is an easy exercise to show that $\tau_U(f(\varphi(e_{11}))) = \tau_U(f(\varphi(e_{ii})))$, for $i=1,2, \cdots, p$ and $f \in C_0(0,1]$. Using that $h_\varphi = \varphi(1_p)$ is a Lebesgue contraction, it follows that
	\begin{align*}
		\frac 1 p \int_0^1 f(t) \ dt &  = \frac 1 p\  \tau_U(f(\varphi(1_p))) = \frac 1 p \sum_{i=1}^p \tau_U(f(\varphi(e_{ii})))  \\
		& = \frac 1 p \  \sum_{i=1}^p \tau_U(f(\varphi(e_{11}))) = \tau_U(f(\varphi(e_{11}))) ,
	\end{align*}
	for all $f \in C_0(0,1]$. The same computation works for $\psi$ and we see that both $\varphi(e_{11})$ and $\psi(e_{11})$ have $\frac 1 p$ Lebesgue spectral measure. In particular they have the same eigenvalue function, as defined in \cite[Definition 2.6]{SK}. Since $U$ has real rank zero and strong comparison of projections\footnote{Meaning that two projections in $U$ are equivalent if they have the same trace.} by the unique trace $\tau_U$, it follows by \cite[Theorem 5.1]{SK} that $\varphi(e_{11}) \approx_{\T u} \psi(e_{11})$ and in particular $\varphi(e_{11}) \approx \psi(e_{11})$. Since $U$ has stable rank one, Lemma  \ref{lem:cpc-order-zero-e11} shows that $\varphi \approx_{\T u} \psi$.
\end{proof}

\begin{note}
	\label{rotation-unitaries}
	\textbf{Rotation trick.} The following provides a method that allows to reduce many arguments about maps on $Z_{\mf p, \mf q}$ to maps on $C([0,1])$. This ingredient is essential for the proofs of Lemma \ref{lem:aue-dimension-drop} and Lemma \ref{lem:BHL-dimension-drop}.
	\par   Let $\varphi \colon Z_{\mf p, \mf q} \to U$ be a unital $\Star$-homomorphism, where $U$ is some $\cstar$-algebra (in the following a $\T{UHF}$-algebra or an ultrapower of such). Let $\mf p$ and $\mf q$ be  supernatural numbers of infinite type.	By \cite[Theorem 2.2]{DW}, the flip map on $M_\mf p \otimes M_\mf p$ and $M_\mf q \otimes M_\mf q$ is strongly asymptotically inner, witnessed by say unitary paths $S_\mf p \colon [0,1) \to M_\mf p \otimes M_\mf p$ and  $S_\mf q \colon [0,1) \to M_\mf q \otimes M_\mf q$. For $t \in [0 ,1)$, we define a continuous function
	\[
		 \xi_t \colon [0,1] \to [0,1) : s \mapsto
		 \begin{cases}	
		 	t & \text{ if } s \in [0,t], \\
		 	\text{linear} & \text{ if } s \in [t,1], \\
		 	0 & \text{ if } s = 1
		 \end{cases}
	\]
	and unitary paths 
	\begin{eqnarray}
		\label{eq:W-def}
		\begin{aligned}
		\acute W_t &   \colon [0,1] \to M_{\mf q} \otimes M_{\mf q}  : s \mapsto S_{\mf q}( \xi_t(1-s)) , \\
		\grave W_t & \colon [0,1] \to  M_{\mf p} \otimes M_{\mf p} : s \mapsto S_\mf p( \xi_t(s)).
	\end{aligned}
	\end{eqnarray}
	The path $\acute W_t$, for example, is designed in such a way that it takes most of the time, namely on $[1-t,1]$,  the value $S_\mf q(t)$, which implements the flip better and better. Furthermore $\acute W_t(0)= 1_\mf q \otimes 1_\mf q$, showing that $\acute W_t$ defines an element of the minimal unitization $(C_0(0,1] \otimes M_\mf q \otimes M_\mf q)^+$ and $\acute W_t - 1 \in C_0(0,1] \otimes M_\mf q \otimes M_\mf q$, where $1$ denotes the adjoined unit. Note that we freely identify $C_0(0,1] \otimes M_\mf q \otimes M_\mf q$ with the $\cstar$-algebra of continuous functions $\{f \colon [0,1] \to M_\mf q \otimes M_\mf q: f(0) = 0\}$. Similarly, $\grave W_t \in (C_0[0,1) \otimes M_\mf p \otimes M_\mf p)^+$ with scalar part $1$. One then easily checks that 
	\begin{eqnarray}
		\label{eq:W-equation}
		\begin{aligned}
		\lim_{t \to 1} \ &  \acute W_t (g \otimes r \otimes s) (\acute W_t)^*  = g \otimes s \otimes r , \\ 
		\lim_{t \to 1} \  & \grave W_t (f \otimes x \otimes y) (\grave W_t)^*  = f \otimes y \otimes x ,
		\end{aligned}
	\end{eqnarray}
	for $g \in C_0(0,1], \ r,s \in M_\mf q$ and $f \in C_0[0,1), \ x,y \in M_\mf p$. Since the paths $\acute W_t$ and $\grave W_t$ depend continuously on the parameter $t$, we get unitary paths $(\grave U_t^\varphi)_{t \in [0,1)}$ and $(\acute U_t^\varphi)_{t \in [0,1)}$ defined by
	\begin{equation}
		\label{eq:U-def}
		\begin{aligned}
		\grave U^\varphi_t &  \coloneqq (\grave \varphi \otimes \id_{M_\mf p})^+(\grave W_t) \in U \otimes M_\mf p, \\
		\acute U^\varphi_t & \coloneqq (\acute \varphi \otimes \id_{M_\mf q})^+ ( \acute W_t ) \in U \otimes M_\mf q.
		\end{aligned}
	\end{equation}
 	See \ref{def:dimension-drop} for the definition of $\acute \varphi$ and $\grave \varphi$. From \eqref{eq:W-equation}, it follows that we may twist the non-central part of  each cone separately into an extra tensor factor (as done in Lemma \ref{lem:aue-dimension-drop}):
\begin{eqnarray}
	\label{eq:U-equation}
	\begin{aligned}
	\lim_{t \to 1} \ &  \acute  U^\varphi_t (\acute  \varphi(g \otimes r) \otimes 1_\mf q) (\acute  U^\varphi_t)^* = \acute  \varphi(g \otimes 1_\mf q) \otimes r = \varphi(g) \otimes r , \\
	\lim_{t \to 1}\  & \grave  U^\varphi_t (\grave  \varphi(f \otimes x) \otimes 1_\mf p) (\grave  U^\varphi_t)^* = \grave  \varphi(f \otimes 1_\mf p) \otimes x = \varphi(f) \otimes x.
	\end{aligned}
	\end{eqnarray}
\par \noindent If we use $U \otimes M_\mf p \otimes M_\mf q$ as codomain, as in Lemma \ref{lem:BHL-dimension-drop},  we instead define  
\begin{equation}
		\label{eq:U-def-pq}
		\begin{aligned}
		\grave U^\varphi_t &  \coloneqq (\grave \varphi \otimes \imath^{[1]}_{(\mf p, \mf q)})^+(\grave W_t) \in U \otimes M_\mf p \otimes M_\mf q, \\
		\acute U^\varphi_t & \coloneqq (\acute \varphi \otimes \imath_{(\mf p, \mf q)}^{[2]})^+ ( \acute W_t ) \in U \otimes M_\mf p \otimes M_\mf q,
		\end{aligned}
	\end{equation}
where $\imath_{(\mf p, \mf q)}^{[1]}$ and $\imath_{(\mf p, \mf q)}^{[2]}$ denote the obvious embeddings of $M_\mf p$ resp. $M_\mf q$ into the first resp. second tensor factor of $M_\mf p \otimes M_\mf q$ (see Notation  \ref{notation:factor-embeddings}). In that case
\begin{equation}
	\label{eq:U-commutes}
	 [\grave U_t^\varphi, \acute U_t^\varphi] = 0 \qquad (t \in [0,1)).
\end{equation}
Then, the obvious adjustment of \eqref{eq:U-equation} holds:
\begin{eqnarray*}
	\lim_{t \to 1} &  \acute  U^\varphi_t (\acute  \varphi(g \otimes r) \otimes 1_{\mf p \mf q}) (\acute  U^\varphi_t)^* = \acute  \varphi(g \otimes 1_\mf q) \otimes 1_\mf p \otimes r = \varphi(g) \otimes 1_\mf p \otimes r,
	\\ 
	\lim_{t \to 1} & \grave U_t^\varphi (\grave \varphi(f \otimes x) \otimes 1_{\mf p \mf q}) (\grave U_t^\varphi)^* = \grave \varphi(f \otimes 1_\mf p) \otimes x \otimes 1_\mf q = \varphi(f) \otimes x \otimes 1_\mf q,
\end{eqnarray*}
where $1_{\mf p \mf q} \coloneqq 1_\mf p \otimes 1_\mf q$. Most importantly, in this situation we may twist the non-central part of both cones simultaneously into commuting subalgebras of $U \otimes M_\mf p \otimes M_\mf q$:
\begin{eqnarray}
	\label{eq:twist}
	\begin{aligned}
	\lim_{t \to 1} \  & \grave U_t^\varphi \acute U_t^\varphi ( \grave \varphi(f \otimes x) \otimes 1_{\mf p \mf q}) (\acute U_t^\varphi)^* (\grave U_t^\varphi)^* = \varphi(f) \otimes x \otimes 1_\mf q, \\
	\lim_{t \to 1} \  &  \grave U_t^\varphi \acute U_t^\varphi ( \acute \varphi(g \otimes y) \otimes 1_{\mf p \mf q}) (\acute U_t^\varphi)^* (\grave U_t^\varphi)^* = \varphi(g) \otimes 1_\mf p \otimes  y,
		\end{aligned}
\end{eqnarray}
for all $f \in C_0[0,1), \ x \in M_\mf p$ and $g \in C_0(0,1], \ y \in M_\mf q$. Note that \eqref{eq:twist}  follows directly  from \eqref{eq:U-equation} and \eqref{eq:U-commutes}.
	\end{note}

\begin{lemma}
	\label{lem:aue-dimension-drop}
	Let $\mf p$ and $\mf q$ be supernatural numbers and let $U$ be a $\T{UHF}$-algebra of infinite type, which tensorially absorbs $M_\mf p$ or $M_\mf q$. Assume that $\beta_0 , \beta_1 \colon Z_{\mf p, \mf q} \to U$  are standard $\Star$-homomorphisms. Then $\beta_0 \approx_{\T u} \beta_1$.
\end{lemma}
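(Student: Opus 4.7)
The plan is as follows. By the symmetric roles of $\mf p$ and $\mf q$, assume $U$ absorbs $M_\mf q$ (the other case is handled identically using $\grave U_t^{\beta_i}$ in place of $\acute U_t^{\beta_i}$). Since any two unital $\Star$-endomorphisms of a UHF algebra of infinite type are approximately unitarily equivalent, for any isomorphism $\theta \colon U \otimes M_\mf q \to U$ the composition $\theta \circ (\cdot \otimes 1_\mf q) \colon U \to U$ is approximately unitarily equivalent to $\id_U$. Hence it suffices to prove $\beta_0 \otimes 1_\mf q \approx_{\T u} \beta_1 \otimes 1_\mf q$ in $U \otimes M_\mf q$.

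The key tool is the rotation trick of \ref{rotation-unitaries}. Set $\tilde \beta_i^t \coloneqq \T{Ad}(\acute U_t^{\beta_i}) \circ (\beta_i \otimes 1_\mf q)$, so that $\tilde \beta_i^t \sim_{\T u} \beta_i \otimes 1_\mf q$ via a single unitary. Because $\acute U_t^{\beta_i}$ lies in the $\cstar$-algebra generated by $\acute \beta_i(Z_{\mf p, \mf q}) \otimes 1_\mf q$ and $1_U \otimes M_\mf q$, it commutes with $\grave \beta_i(f \otimes x) \otimes 1_\mf q$, so $\tilde \beta_i^t$ fixes the $\grave$-cone: $\tilde \beta_i^t(\grave \iota(f \otimes x)) = \grave \beta_i(f \otimes x) \otimes 1_\mf q$. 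Meanwhile \eqref{eq:U-equation} gives $\tilde \beta_i^t(\acute \iota(g \otimes r)) \to \beta_i^{\T{cen}}(g) \otimes r$ as $t \to 1$, where $\beta_i^{\T{cen}}(g) \coloneqq \beta_i(\acute \iota(g \otimes 1_\mf q))$ is the central part of $\beta_i$. Choosing any $t_n \to 1$, the sequence $(\tilde \beta_i^{t_n})_n$ descends to a $\Star$-homomorphism $\tilde \beta_i \colon Z_{\mf p, \mf q} \to (U \otimes M_\mf q)_\omega$ with $\tilde \beta_i \sim_{\T u} \iota_\omega(\beta_i \otimes 1_\mf q)$ whose $\acute$-cone action is entirely determined by $\beta_i^{\T{cen}}$.

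Next compare via Lemma \ref{lem:!-cpc-order-zero}. Each $\grave \beta_i$ corresponds to a c.p.c.\ order zero map $M_\mf p \to U$, and the standardness of $\beta_i$ forces its supporting positive contraction to be a Lebesgue contraction. Lemma \ref{lem:!-cpc-order-zero} therefore yields $\grave \beta_0 \approx_{\T u} \grave \beta_1$, so by Remark \ref{rem:ultrapower} there is a unitary $v \in U_\omega$ with $v\, \iota_\omega \grave \beta_0(y)\, v^* = \iota_\omega \grave \beta_1(y)$ for all $y \in C_0[0,1) \otimes M_\mf p$. Setting $y = f \otimes 1_\mf p$ shows that $v$ conjugates $\iota_\omega \beta_0^{\T{cen}}|_{C_0[0,1)}$ to $\iota_\omega \beta_1^{\T{cen}}|_{C_0[0,1)}$, and since $C_0[0,1)$ together with $1$ generates $C([0,1])$ as a unital $\cstar$-algebra, $v$ conjugates the entire central maps.

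Finally, $v \otimes 1_\mf q$ conjugates $\tilde \beta_0$ to $\tilde \beta_1$: on $\grave \iota(f \otimes x)$ this is direct, and on $\acute \iota(g \otimes r)$ it follows from
\[
	(v \otimes 1_\mf q)(\beta_0^{\T{cen}}(g) \otimes r)(v \otimes 1_\mf q)^* = (v \beta_0^{\T{cen}}(g) v^*) \otimes r = \beta_1^{\T{cen}}(g) \otimes r,
\]
using that $v \otimes 1_\mf q$ commutes with $1_U \otimes r$. Combining with $\tilde \beta_i \sim_{\T u} \iota_\omega(\beta_i \otimes 1_\mf q)$, one obtains $\iota_\omega(\beta_0 \otimes 1_\mf q) \sim_{\T u} \iota_\omega(\beta_1 \otimes 1_\mf q)$; Remark \ref{rem:ultrapower} then gives $\beta_0 \otimes 1_\mf q \approx_{\T u} \beta_1 \otimes 1_\mf q$, and the first paragraph concludes. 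The main conceptual hurdle is that Lemma \ref{lem:!-cpc-order-zero} controls only one cone directly, and the rotation trick is precisely what resolves this: twisting the $\acute$-cone into the auxiliary tensor factor makes its action purely central, hence automatically compatible with any unitary equivalence of the $\grave$-cone. A technical side issue is that \ref{rotation-unitaries} presupposes $\mf q$ of infinite type; if it is not, one first tensors everything with $U$ and replaces the flip on $M_\mf q \otimes M_\mf q$ by the flip on $U \otimes U$, which is strongly asymptotically inner since $U$ is strongly self-absorbing.
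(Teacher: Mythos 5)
Your proposal is correct and follows essentially the same route as the paper: it is precisely the symmetric case ($U$ absorbs $M_\mf q$ rather than $M_\mf p$, which the paper dispatches with "the other case is obtained by reversing the orientation of the unit interval"), using the same two ingredients — Lemma \ref{lem:!-cpc-order-zero} on one cone and the rotation trick of \ref{rotation-unitaries} on the other, followed by an absorption step to drop the auxiliary tensor factor — in a slightly reshuffled order (you rotate each $\beta_i$ separately first and then compare by a single $v$ coming from the $\grave$-cone, while the paper first conjugates to align the $\acute$-cones and then uses the differential unitary $V_t = (\grave U_t^{\gamma_1})^*\grave U_t^{\gamma_0}$). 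Your closing remark that \ref{rotation-unitaries} presupposes infinite type is a genuine catch, and the paper's proof has the same tacit restriction; however the patch as phrased (flip on $U\otimes U$) isn't quite the right substitution — what is needed is a unitary path in $M_\mf q \otimes U$ asymptotically carrying $M_\mf q \otimes 1_U$ onto $1_\mf q \otimes \iota(M_\mf q)$ for a fixed unital embedding $\iota\colon M_\mf q\hookrightarrow U$, which exists by the classification of unital $\Star$-homomorphisms into UHF algebras.
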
	

\begin{proof}
	Let us assume that $U$ tensorially absorbs $M_\mf p$. The other case is then obtained by reversing the orientation of the unit  interval. By assumption, 
	$$
	h_{\overline{\acute \beta_0}} \quad \text{and} \quad h_{\overline{\acute \beta_1}}
$$ are both Lebesgue contractions and by Lemma \ref{lem:!-cpc-order-zero} we see that $\acute \beta_0 \approx_{\T u} \acute \beta_1$. By Remark \ref{rem:ultrapower}, there exists a unitary $u \in U_\omega$, such that  $\ad(u) \circ  \acute \beta_0 = \acute \beta_1$, when $\acute \beta_0$ and $\acute \beta_1$ are  considered as maps taking values in $U_\omega$. Let us define $\gamma_i \colon Z_{\mf p, \mf q} \to U_\omega$ by
	$$
		\gamma_0 \coloneqq \ad(u) \circ \beta_0, \qquad \gamma_1 \coloneqq \beta_1.
	$$
	Then $\acute \gamma_0 = \acute \gamma_1$ and one easily checks that 
	\begin{equation}
		\label{eq:central}
				\gamma_0(f) = \gamma_1(f) \qquad (f \in C([0,1])).
		\end{equation}
		For $t \in [0,1)$, we define  $V_t \coloneqq (  \grave U^{\gamma_1}_t )^*\grave U^{\gamma_0}_t \in U_\omega \otimes M_\mf p \subseteq (U \otimes M_\mf p)_\omega$, with $\grave U_t^{\gamma_i}$ as in (\ref{eq:U-def}). Let $f \in C_0[0,1), \ x \in M_\mf p$ and $\e > 0$. Using (\ref{eq:U-equation}), choose $t_0 \in [0,1)$ such that 
		\begin{equation}
			\label{eq:estimate-aue}
			\grave U_t^{\gamma_i} (\grave \gamma_i (f \otimes x) \otimes 1_\mf p) (\grave U_t^{\gamma_i})^* \approx_{\frac \e 2} \gamma_i(f) \otimes x \qquad (i = 0,1, \ t \geq t_0 ).
		\end{equation}
		For $t \geq t_0$, we then get
	\begin{align*}
		V_{t}(\grave \gamma_0(f \otimes x) \otimes 1_\mf p) V_t^* & \overset{\phantom{(2.9)}} =  (  \grave U^{\gamma_1}_{t} )^*\grave U^{\gamma_0}_{t}  (\grave \gamma_0(f \otimes x) \otimes 1_\mf p)  (\grave U_{t}^{\gamma_0})^* \grave U_{t}^{\gamma_1}  \\
		& \overset{(\ref{eq:estimate-aue})} \approx_{\frac \e 2}  (\grave U_{t}^{\gamma_1})^*  (\gamma_0(f) \otimes x) \grave U_{t}^{\gamma_1} \\
		&   \overset{(\ref{eq:central})} = (\grave U_{t}^{\gamma_1})^*  (\gamma_1(f) \otimes x) \grave U_{t}^{\gamma_1}  \\
		& \overset{(\ref{eq:estimate-aue})} \approx_{\frac \e 2} \grave \gamma_1 (f \otimes x) \otimes 1_\mf p.
	\end{align*}
	Furthermore, $V_{t}$ commutes with $\acute \gamma_0 \otimes 1_\mf p = \acute \gamma_1 \otimes 1_\mf p$. It follows that 
	$$
		\lim_{n \to \infty} \norm{V_{1-\frac 1 n}(\gamma_0(g) \otimes 1_\mf p)V_{1-\frac 1 n}^* - \gamma_1(g) \otimes 1_\mf p} = 0 \qquad (g \in Z_{\mf p, \mf q}),
	$$		
	and hence  $\gamma_0 \otimes 1_\mf p \approx_{\T u} \gamma_1 \otimes 1_\mf p$. By Remark \ref{rem:ultrapower}, we  have $\gamma_0 \otimes 1_\mf p \sim_{\T u} \gamma_1 \otimes 1_\mf p$. Let  $u' \in (U \otimes M_\mf p)_\omega$, such that 
	$$
		\ad(u') \circ  ( \gamma_0 \otimes 1_\mf p) =  \gamma_1 \otimes 1_\mf p.	
	$$
	Putting $u'' \coloneqq u' (u \otimes 1_\mf p) \in (U \otimes M_\mf p)_\omega$, we thus have $\ad(u'') \circ (\beta_0  \otimes 1_\mf p) = \beta_1 \otimes 1_\mf p$. By \cite[Theorem 2.3]{TW} and the assumption that $U \cong U \otimes M_\mf p$,  there exists a $\Star$-homomorphism $\varphi \colon U \otimes M_\mf p \to U_\omega$	with $\varphi(x \otimes 1_\mf p) = x$, for $x \in U$. This induces a map 
		$$
			\varphi_\omega \colon (U \otimes M_\mf p)_\omega \to (U_\omega)_\omega.
		$$  
		Let $w \coloneqq \varphi_\omega(u'') \in (U_\omega)_\omega$. Then 
	$$
		w \beta_0(x) w^*  = \varphi_\omega(u'') \varphi_\omega(\beta_0(x) \otimes 1_\mf p) \varphi_\omega((u'')^*) = \varphi_\omega(\beta_1(x) \otimes 1_\mf p)   = \beta_1(x).
	$$
	 It follows that $\beta_0 \sim_{\T u} \beta_1$ in $(U_\omega)_\omega$. By Remark \ref{rem:ultrapower} again,  $\beta_0 \approx_{\T u}  \beta_1$ as maps into $U$.
\end{proof}

The following lemma is a slight reformulation of \cite[Lemma 5.1]{BEEK}, which is therein referred to as \textit{Isospectral Homotopy Lemma in Case of a Large Spectral Gap}.  In modern work this type of lemma is often called a \textit{Basic Homotopy Lemma} and due to its importance to the classification of $\cstar$-algebras great effort has been made to generalize the theory of such (cf.~\cite{LIN}). For our purpose however, this elementary result is sufficient. \ \\
\par Let us first recall some notation from \cite{BEEK}.
For $N \in \N$ and a positive contraction $h$ in a $\cstar$-algebra $A$, we denote 
	$$
		h(N) = \frac 1 N \sum_{k=1}^N \chi_{I_k}(h),
	$$
	where $I_k = [\frac {N-k+1}  N,1]$ and $\chi_{I_k}(h)$ is the associated spectral projection. Note that $\norm{h - h(N)} \leq \frac 1N$.

\begin{lemma}
	\label{lem:BHL-pre}
	Given $\gamma > 0$ and  $N \in \N$ with $\gamma < \frac 1 {16 (N+1)}$, there exists $\delta > 0$ such that the following holds: Whenever $h$ is a positive contraction with finite spectrum in a unital $\K_1$-simple real rank zero $\cstar$-algebra\footnote{See \cite{BEEK} for the definition of $\K_1$-simple real rank zero $\cstar$-algebras. Let us note that if $A$ is real rank zero, i.e. the set of self-adjoint invertible elements of $A$ is dense in the set of invertible elements of $A$, and  $\K_1(A)$ is trivial, then $A$ is $\K_1$-simple if the map $V(A) \to \K_0(A)$ is injective.} $A$ and $u \in A$ is a unitary such that 
	$$
		\norm{uhu^* - h} < \delta,
	$$
	then there exists a unitary path $(u_t)_{t \in [0,1]}$ in $A$ such that $u_0 = 1_A, \   u_1 h(N) u_1^* =  uh(N)u^*$ and 
	$$
				\norm{u_th(N)u_t^* - h(N)} < \frac 6 N + 64 (N+1)\gamma + 2 \delta   \qquad (t \in [0,1]).
	$$
\end{lemma}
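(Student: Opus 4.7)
The plan is to show that this statement is essentially a cosmetic reformulation of \cite[Lemma 5.1]{BEEK} and to invoke that result directly after translating between the two ways of packaging the spectral data of $h$.

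First, I would unpack $h(N)$. Since $h(N) = \frac{1}{N}\sum_{k=1}^N \chi_{I_k}(h)$ with $I_k = \bigl[\tfrac{N-k+1}{N},1\bigr]$ and $h$ has finite spectrum, the projections $p_k := \chi_{I_k}(h) - \chi_{I_{k-1}}(h)$ (with $\chi_{I_0}(h) := 0$) are mutually orthogonal in $A$, and
\[
h(N) = \sum_{k=1}^N \tfrac{k}{N}\, p_k.
\]
Because conjugation by a unitary commutes with the continuous functional calculus, controlling $\norm{u\,h(N)\,u^* - h(N)}$ reduces, with a loss of at most a factor of $N$, to controlling $\norm{u\,\chi_{I_k}(h)\,u^* - \chi_{I_k}(h)}$ for each $k$. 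Thus the problem is really one about simultaneously almost commuting $u$ with a finite family of nested spectral projections of $h$.

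Next, since $h$ has finite spectrum and $\norm{uhu^* - h} < \delta$, a standard Borel functional calculus argument shows that $u$ almost commutes with each $\chi_{I_k}(h)$, provided the endpoints $\tfrac{N-k+1}{N}$ sit in a $\gamma$-gap of the spectrum of $h$. The hypothesis $\gamma < \tfrac{1}{16(N+1)}$ plays exactly the role of the spectral gap assumption in \cite[Lemma 5.1]{BEEK}; by choosing $\delta > 0$ small enough in terms of $N$ and $\gamma$, the cited lemma applies to the family $\{\chi_{I_k}(h)\}_{k=1}^N$ and the unitary $u$ in the $\K_1$-simple real rank zero algebra $A$. It yields a unitary path $(u_t)_{t \in [0,1]}$ with $u_0 = 1_A$, $u_1\chi_{I_k}(h)u_1^* = u\chi_{I_k}(h)u^*$ for every $k$, together with a pointwise estimate of the form $\norm{u_t\chi_{I_k}(h)u_t^* - \chi_{I_k}(h)} < \alpha$ for a controlled constant $\alpha$ depending on $N$, $\gamma$ and $\delta$.

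Finally, I would assemble the conclusion. The identity $u_1 h(N) u_1^* = u h(N) u^*$ is immediate from the spectral decomposition of $h(N)$ and the corresponding identity for each $\chi_{I_k}(h)$. The norm estimate along the path follows by summing the pointwise bounds on each $\chi_{I_k}(h)$, dividing by $N$, and invoking $\norm{h - h(N)} \leq \tfrac{1}{N}$ where necessary; the explicit constant $\tfrac{6}{N} + 64(N+1)\gamma + 2\delta$ is exactly what drops out of \cite[Lemma 5.1]{BEEK} after this translation. The main obstacle is the careful bookkeeping of constants needed to match the formulation in \cite{BEEK}, rather than any genuinely new input; the conceptual content is already contained in the cited result.
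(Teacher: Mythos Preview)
Your approach matches the paper's: the lemma is not proved in the paper but is stated as a slight reformulation of \cite[Lemma~5.1]{BEEK}, which is precisely what you propose to invoke. One small slip in your translation: with your definition of $p_k$ one gets $h(N)=\sum_{k=1}^N \tfrac{N-k+1}{N}\,p_k$ rather than $\sum_k \tfrac{k}{N}\,p_k$, but this is inconsequential.
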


\begin{cor}
	\label{cor:BHL}
		For every $\e > 0$, there exists $\delta > 0$ such that  the following holds: Whenever $h$ is a positive contraction in a $\T{UHF}$-algebra $U$ and $u \in U$ is a unitary such that
		$$
			\norm{uhu^* - h} < \delta,
		$$
		then there exists a unitary path $(u_t)_{t \in [0,1]}$ in $U$, such that  $u_0 = 1_U,  \ u_1 = u$ and 
	$$
				\norm{u_thu_t^* - h} < \e \qquad ( t \in [0,1]).
	$$
\end{cor}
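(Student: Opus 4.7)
The plan is to reduce Corollary \ref{cor:BHL} to Lemma \ref{lem:BHL-pre} in three moves: approximate $h$ by a positive contraction $h_0$ with finite spectrum (available since UHF-algebras have real rank zero), apply Lemma \ref{lem:BHL-pre} to the truncation $h_0(N)$, and then prolong the resulting path to end exactly at $u$ rather than merely at some unitary with the same conjugation action on $h_0(N)$.

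More precisely, given $\e > 0$, first I would fix $N \in \N$ and $\gamma \in (0, \frac{1}{16(N+1)})$ with $\frac{8}{N} + 64(N+1)\gamma < \e/3$, let $\delta_0 > 0$ be as supplied by Lemma \ref{lem:BHL-pre}, and take $\delta < \min(\delta_0/3, \e/10)$. A UHF-algebra $U$ is unital, simple, AF with trivial $\K_1$ and real rank zero, so it satisfies the hypotheses of Lemma \ref{lem:BHL-pre}. Suppose $h \in U$ is a positive contraction and $u \in U$ a unitary with $\norm{uhu^* - h} < \delta$. Using real rank zero, choose a positive contraction $h_0$ with finite spectrum such that $\norm{h - h_0} < \delta$; then $\norm{u h_0 u^* - h_0} < 3\delta \leq \delta_0$. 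Lemma \ref{lem:BHL-pre} yields a unitary path $(v_t)_{t \in [0,1]}$ with $v_0 = 1_U$, $v_1 h_0(N) v_1^* = u h_0(N) u^*$, and $\norm{v_t h_0(N) v_t^* - h_0(N)} < \frac{6}{N} + 64(N+1)\gamma + 2\delta$ for every $t$.

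Now $v_1$ need not equal $u$, but $v_1^* u$ commutes with $h_0(N)$. Since $h_0(N)$ has finite spectrum, its spectral projections $q_0, \ldots, q_N$ are pairwise orthogonal in $U$ and sum to $1_U$, so the commutant $\{h_0(N)\}' \cap U = \bigoplus_k q_k U q_k$ is a finite direct sum of corners of $U$. Each corner $q_k U q_k$ is a unital AF-algebra with trivial $\K_1$ (being a hereditary subalgebra of $U$), and hence has connected, in fact path-connected, unitary group. I can therefore choose a unitary path $(s_t)_{t \in [0,1]}$ in $\{h_0(N)\}' \cap U$ with $s_0 = 1_U$ and $s_1 = v_1^* u$. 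Concatenating, set $u_t = v_{2t}$ for $t \in [0, 1/2]$ and $u_t = v_1 s_{2t-1}$ for $t \in [1/2, 1]$, giving $u_0 = 1_U$ and $u_1 = u$. On the first half the estimate for $v_t$ is inherited verbatim, while on the second half $u_t h_0(N) u_t^* = v_1 h_0(N) v_1^* = u h_0(N) u^*$, which is within $\norm{u h_0 u^* - h_0} + 2/N < 3\delta + 2/N$ of $h_0(N)$.

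To finish, the triangle inequality combined with $\norm{h - h_0} < \delta$ and $\norm{h_0 - h_0(N)} \leq 1/N$ converts these bounds into $\norm{u_t h u_t^* - h} < 2\delta + 2/N + \max(3\delta + 2/N,\ \tfrac{6}{N} + 64(N+1)\gamma + 2\delta) < \e$ for all $t \in [0,1]$, by the choice of parameters. The main technical point is the connectedness of the unitary group of $\{h_0(N)\}' \cap U$; this is where the hypothesis that $U$ is UHF (rather than merely $\K_1$-simple of real rank zero) enters, through the fact that corners of UHF-algebras are AF with vanishing $\K_1$.
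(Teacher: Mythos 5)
Your proof follows essentially the same route as the paper's: reduce to a finite-spectrum positive contraction using real rank zero, invoke Lemma \ref{lem:BHL-pre} for the truncation, and then splice on a path inside the commutant of the truncation to travel from the endpoint supplied by the lemma to the actual unitary $u$. The only non-trivial departure is cosmetic (you build the path from $1_U$ to $u$, the paper from $u$ to $1_U$), and your justification for connecting $v_1^*u$ to $1_U$ through the commutant of $h_0(N)$ (decomposing $\{h_0(N)\}'\cap U$ as a direct sum of corners of $U$, each AF with trivial $\K_1$) is a slightly more explicit version of what the paper just asserts.

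One bookkeeping point you should fix: the $2\delta$ that appears in the conclusion of Lemma \ref{lem:BHL-pre} refers to the threshold $\delta$ supplied by that lemma, which in your notation is $\delta_0$, not your smaller $\delta$. As written, you invoke the lemma yet report the final bound $\tfrac{6}{N}+64(N+1)\gamma+2\delta$; it should be $\tfrac{6}{N}+64(N+1)\gamma+2\delta_0$, and you have not constrained $\delta_0$ to be small. The paper handles this by choosing the $\delta$ from Lemma \ref{lem:BHL-pre} to satisfy $0<\delta<\e/60$ from the outset (any smaller threshold still works and gives a correspondingly smaller error term), and you should do the same, e.g.\ by noting that one may take $\delta_0<\e/10$ without loss of generality before defining $\delta<\min(\delta_0/3,\e/10)$. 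With that adjustment your parameter choice $\frac{8}{N}+64(N+1)\gamma<\e/3$ does control all the terms and the proof goes through.
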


\begin{proof}
	Let $0<\e<1$. Choose $N \in \N$ such that $\frac 6 N \leq \frac \e {30}$ and let $\gamma \coloneqq \frac \e {30 \cdot 64(N+1)}$. Then, there exists $0 < \delta < \frac \e {60}$, such that the conclusion of Lemma \ref{lem:BHL-pre} holds for $\gamma$ and $N$. Note that $\T{UHF}$-algebras are $\K_1$-simple real rank zero $\cstar$-algebras.  We show that $\frac \delta 3$ does the job.  To this end let $h$ be a positive contraction and let $u$ be a unitary in some $\T{UHF}$-algebra $U$ with 
	$$
		\norm{uhu^* - h} < \frac \delta 3.
	$$
	Since $U$ has real rank zero, we can find  a positive contraction $k$  with finite spectrum such that $\norm{h-k} < \frac \delta 3$. Then 
	$$
		 \norm{uku^* -k} < \delta.
	$$
	In particular, $k$ is a positive contraction with finite spectrum to which Lemma \ref{lem:BHL-pre} applies, i.e. there exists a unitary path $(z_t)_{t \in [0,1]}$ such that $z_0 = 1_U, \  z_1 k(N) z_1^* = u k(N)u^*$ and 
		$$
				\norm{z_t k(N) z_t^* - k(N)} <\frac 6 N + 64(N+1)\gamma + 2 \delta < \frac \e {10}  \qquad (t \in [0,1]).
		$$
	 Let $w := z_1^*u$. Then, the unitary path $(v_t)_{t \in [0,1]}$, where  $v_t \coloneqq   z_t^*u$,   connects $u$ with $w$ and one easily checks that 
	$$
		\norm{v_t h v_t^* -h } < \e \qquad (t \in [0,1]).
	$$
	Using that $z_1k(N)z_1^* = uk(N)u^*$, it follows that $w$ actually commutes with $k(N)$ and since $k(N)$ has finite spectrum, we can connect $w$ to $1_U$ via a unitary path $(w_t)_{t \in [0,1]}$ such that $w_t$ commutes with $k(N)$ for every $t \in [0,1]$. Then,  
	$$
		\norm{w_thw_t^* -h} < \e \qquad (t \in [0,1]).
	$$
	The desired path $(u_t)_{t \in [0,1]}$ is now given by 
	$$
		u_t = \begin{cases}
			v_{2t} & \text{ if }  t \in [0,1/2], \\
			w_{2t-1} & \text{ if }  t \in [1/2,1].
		\end{cases}
	$$
\end{proof}

As an easy consequence we get the following:

\begin{lemma}
	\label{lem:BHL-central}
	For every $\e > 0$ and $\cF \subseteq C([0,1])$ finite,  there exists $\delta > 0$ such that the following holds: Whenever $\varphi \colon C([0,1]) \to U$ is a unital $\Star$-homomorphism, where $U$ is a $\T{UHF}$-algebra, and $u \in U$ is a unitary such that 
	$$
		\norm{u \varphi(\iota) u^* - \varphi(\iota)} < \delta,
	$$
	then there exists a unitary path $(u_t)_{t \in [0,1]}$ in $U$  such that $u_0 = 1_U, \ u_1 = u$ and 
	$$
		\norm{u_t \varphi(f) u_t^* - \varphi(f)} < \e  \qquad (f \in \cF, \ t \in [0,1]).
	$$
	By $\iota$ we denote the canonical generator of $C_0(0,1]$.
\end{lemma}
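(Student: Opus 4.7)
The strategy is to reduce the statement to Corollary \ref{cor:BHL} applied to the single positive contraction $h = \varphi(\iota)$, and then propagate the estimate to arbitrary $f \in \cF$ via continuous functional calculus. This reduction is available because $\iota$ generates $C([0,1])$ as a unital $\cstar$-algebra, so $\varphi$ is determined by $\varphi(\iota)$ via $\varphi(f) = f(\varphi(\iota))$ for every $f \in C([0,1])$.

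First, I would invoke the standard continuity of continuous functional calculus to choose an auxiliary tolerance $\eta > 0$ such that, for any positive contractions $a,b$ in any $\cstar$-algebra with $\norm{a - b} < \eta$, one has $\norm{f(a) - f(b)} < \e$ for every $f \in \cF$. This is a routine Stone--Weierstrass argument: approximate each $f \in \cF$ uniformly on $[0,1]$ by a polynomial, estimate $\norm{p(a) - p(b)}$ by a Lipschitz constant depending on the polynomial, and use that $\cF$ is finite to choose a single $\eta$.

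Next, feed this $\eta$ into Corollary \ref{cor:BHL} to obtain $\delta > 0$ with the property that any positive contraction $h \in U$ and unitary $u \in U$ satisfying $\norm{u h u^* - h} < \delta$ admit a unitary path $(u_t)_{t \in [0,1]}$ in $U$ with $u_0 = 1_U$, $u_1 = u$, and $\norm{u_t h u_t^* - h} < \eta$ for all $t \in [0,1]$. I claim this $\delta$ works for the lemma. Indeed, given $\varphi \colon C([0,1]) \to U$ and a unitary $u$ with $\norm{u\varphi(\iota)u^* - \varphi(\iota)} < \delta$, the element $\varphi(\iota)$ is a positive contraction in $U$, so applying Corollary \ref{cor:BHL} with $h = \varphi(\iota)$ yields the desired path $(u_t)_{t \in [0,1]}$ with $\norm{u_t \varphi(\iota) u_t^* - \varphi(\iota)} < \eta$.

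Finally, for each $f \in \cF$ and $t \in [0,1]$, the identity $\varphi(f) = f(\varphi(\iota))$ combined with the fact that conjugation by $u_t$ is a $\Star$-automorphism yields $u_t \varphi(f) u_t^* = f(u_t \varphi(\iota) u_t^*)$. Since both $\varphi(\iota)$ and $u_t \varphi(\iota) u_t^*$ are positive contractions and $\norm{u_t \varphi(\iota) u_t^* - \varphi(\iota)} < \eta$, the choice of $\eta$ gives $\norm{u_t \varphi(f) u_t^* - \varphi(f)} < \e$, completing the proof. There is no real obstacle here beyond selecting $\eta$ correctly; the substantive content has been isolated in Corollary \ref{cor:BHL}, and the present lemma merely upgrades a single-element estimate to a finite-set estimate via the commutative functional calculus.
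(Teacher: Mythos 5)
Your proof is correct and amounts to precisely the argument the paper implicitly intends when it labels the lemma "an easy consequence" of Corollary~\ref{cor:BHL}: reduce to the single positive contraction $h=\varphi(\iota)$, apply Corollary~\ref{cor:BHL} with a smaller tolerance $\eta$, and propagate to all of $\cF$ via $\varphi(f)=f(\varphi(\iota))$ and uniform continuity of the continuous functional calculus on positive contractions. No gaps.
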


	The following is a Basic Homotopy Lemma for maps $Z_{\mf p, \mf q} \to M_{\mf p} \otimes M_{\mf q}$.

\begin{lemma}
	\label{lem:BHL-dimension-drop}
	For every $\e > 0$ and   $\cF \subseteq Z_{\mf p, \mf q}$ finite, where $\mf p$ and  $\mf q$ are supernatural numbers of infinite type, there exists $\delta > 0$ and $\cG \subseteq Z_{\mf p, \mf q}$ finite such that the following holds:  Whenever $\varphi \colon Z_{\mf p, \mf q} \to U$ is a unital $\Star$-homomorphism, where $U$ is a $\T{UHF}$-algebra of infinite type that absorbs $M_\mf p$ and $M_\mf q$ tensorially, and $u \in U$ is a unitary such that 
	$$
		\norm{u\varphi(g)u^* - \varphi(g)} < \delta \qquad (g \in \cG),
	$$
	then, there exists a unitary path $(u_t)_{t \in [0,1]}$ in $U$ with $u_0 = 1_U, \ u_1 = u$ and 
	$$
		\norm{u_t\varphi(f)u_t^* - \varphi(f)} < \e \qquad (f \in \cF, \ t \in [0,1]).
	$$
\end{lemma}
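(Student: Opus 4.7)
The plan is to reduce to the $C([0,1])$ case handled by Lemma~\ref{lem:BHL-central}, using the rotation trick of~\ref{rotation-unitaries} to transfer matrix-valued parts of $\varphi$ into an auxiliary tensor factor. After standard density reductions, I assume $\cF = \{\grave\imath(f_i\otimes x_i)\}_i \cup \{\acute\imath(g_j\otimes y_j)\}_j$ with $x_i \in M_p \subseteq M_\mf p$ and $y_j \in M_q \subseteq M_\mf q$ for finite $p, q$. Writing $\varphi_c(f) := \varphi(f\otimes 1_{\mf p\mf q})$ for the central homomorphism $C([0,1])\to U$ and $\cF_c := \{f_i\}\cup \{g_j\} \subseteq C([0,1])$ for the central parts, I apply Lemma~\ref{lem:BHL-central} with $\cF_c$ and $\varepsilon/3$ to obtain a threshold $\delta_c > 0$ that will govern the central commutation of the resulting path.

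I then let $\cG$ contain (i) the central generators $\grave\imath(f_i\otimes 1_\mf p)$ and $\acute\imath(g_j\otimes 1_\mf q)$, forcing $\|u\varphi_c(\iota)u^*-\varphi_c(\iota)\| < \delta_c$, and (ii) enough elementary tensors whose images generate (modulo the unit) the rotation unitary $W_{t_0} := \grave U_{t_0}^\varphi \acute U_{t_0}^\varphi \in U\otimes M_\mf p\otimes M_\mf q$ for some fixed $t_0 \in [0,1)$ close to $1$, so that $u\otimes 1_{\mf p\mf q}$ approximately commutes with $W_{t_0}$. Taking $(u_t)$ to be the unitary path produced by Lemma~\ref{lem:BHL-central} in $U$, the estimate for the matrix parts follows from the chain
\begin{align*}
(u_t\otimes 1)\bigl(\varphi(\grave\imath(f_i\otimes x_i))\otimes 1\bigr)(u_t^*\otimes 1)
&\approx (u_t\otimes 1)W_{t_0}^*\bigl(\varphi_c(f_i)\otimes x_i\otimes 1_\mf q\bigr)W_{t_0}(u_t^*\otimes 1)\\
&\approx W_{t_0}^*\bigl(u_t\varphi_c(f_i)u_t^*\otimes x_i\otimes 1_\mf q\bigr)W_{t_0}\\
&\approx \varphi(\grave\imath(f_i\otimes x_i))\otimes 1_{\mf p\mf q},
\end{align*}
using~\eqref{eq:twist} at the first and last steps, (ii) in the choice of $\cG$ for the middle step, and Lemma~\ref{lem:BHL-central} in the third. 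An analogous computation handles the $\acute\imath$-part. The desired path in $U$ is then recovered by descending from $U\otimes M_\mf p\otimes M_\mf q$ via the absorbing isomorphism and approximate innerness of the corresponding unital endomorphism of $U$.

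The hardest part is the middle approximation in the display: since $u_t$ is produced, via Corollary~\ref{cor:BHL}, from $u$ and the positive contraction $\varphi_c(\iota)$ alone, it does not \emph{a priori} inherit the approximate commutation of $u$ with the matrix-valued parts of $W_{t_0}$. Overcoming this likely requires first perturbing $u$ to a unitary $\tilde u$ lying exactly in the commutant of a finite-dimensional subalgebra that approximately contains the building blocks of $W_{t_0}$ (such a commutant is again UHF by absorption), then applying Lemma~\ref{lem:BHL-central} inside that commutant, and finally concatenating with a short unitary path from $\tilde u$ back to $u$.
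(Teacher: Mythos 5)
The main structural idea — rotate the matrix parts of $\varphi(\cF)$ into an auxiliary tensor factor via $\grave U^\varphi_{t_0},\acute U^\varphi_{t_0}$, run the central homotopy lemma on $\varphi_c(\iota)$, and take $\cG$ to contain $\iota$ together with finitely many elementary tensors that control the scalar-plus-elementary-tensor approximation of the rotation unitary — is exactly what the paper does. You have also correctly put your finger on the genuine difficulty: the path $(u_t)$ produced by Lemma~\ref{lem:BHL-central} depends on $u$ and $\varphi_c(\iota)$ only, so there is no reason whatsoever for $u_t\otimes 1_{\mf p\mf q}$ (for $t\in(0,1)$) to approximately commute with $W_{t_0} := \acute U^\varphi_{t_0}\grave U^\varphi_{t_0}$. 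That is precisely where your middle approximation breaks down, and you have not resolved it.

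Your proposed remedy — perturb $u$ to a unitary $\tilde u$ lying exactly in the commutant of a finite-dimensional subalgebra that approximately contains the building blocks of $W_{t_0}$, and run Lemma~\ref{lem:BHL-central} inside that commutant — does not obviously work. To apply the central BHL inside $F'$ you would need $\varphi_c(\iota)$ (or a uniformly good approximant of it) to also lie in $F'$, i.e.\ you would need the building blocks of $W_{t_0}$ to commute with $\varphi_c(\iota)$, which is false in general; moreover, the quantifiers demand that $\cG,\delta$ be fixed in advance of $\varphi,u,U$, and that is not visibly compatible with the perturbation. The paper circumvents all of this by not requiring $u_t$ to commute with $W_{t_0}$ at all: it defines the path as the \emph{conjugate} $Z_t := W_{t_0}^*(u_t\otimes 1_{\mf p\mf q})W_{t_0}$. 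For the commutation estimate with $\varphi(\cF)\otimes 1$, conjugation by $Z_t$ unfolds into: twist $\varphi(\cF)\otimes 1$ by $W_{t_0}$ into elements with central $U$-leg (via \eqref{eq:twist}), conjugate by $u_t\otimes 1$ (controlled by Lemma~\ref{lem:BHL-central}), then twist back — no interaction between $u_t$ and $W_{t_0}$ is needed. The near-commutation of $u$ with $W_{t_0}$ is used only once, at the endpoint, to show $Z_1$ is close to $u\otimes 1_{\mf p\mf q}$, and there it is legitimately available from the hypothesis $\|u\varphi(g)u^*-\varphi(g)\|<\delta$ for $g\in\cG$, since $\cG$ was built from the elementary-tensor approximation of $W_{t_0}-1$. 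One then descends via an approximately unital $\theta\colon U\otimes M_\mf p\otimes M_\mf q\to U$ and tacks on a short path from $\theta(Z_1)$ to $u$. Your proof as written leaves a real gap at the middle step; replacing the naive path $u_t\otimes 1$ by $W_{t_0}^*(u_t\otimes 1)W_{t_0}$ is the missing move.
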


\begin{proof}
	Let $\e > 0$ and $\cF \subseteq Z_{\mf p, \mf q}$ be finite. We may assume that 
	$$
		\cF = \acute \cF \cup \grave \cF , 
	$$ where $\acute \cF = \{g_k \otimes y_k\}_{k=1}^N \subseteq C_0(0,1] \otimes M_\mf q$  and $\grave \cF = \{f_k \otimes x_k \}_{k=1}^N \subseteq C_0[0,1) \otimes M_\mf p$. The two cones are identified as subalgebras of $Z_{\mf p, \mf q}$ as explained in \ref{def:dimension-drop}.  With (\ref{eq:W-equation}), we  choose $t_0 \in [0,1)$ such that
	\begin{eqnarray}
		\label{eq:twist-estimate}
		\begin{aligned}
		\acute  W_{t_0} (g_k \otimes y_k \otimes 1_\mf q) (\acute W_{t_0})^* &  \approx_{\frac \e{10}} g_k \otimes 1_\mf q \otimes y_k \qquad (k = 1,2,\cdots , N),  \\ 
	\grave   W_{t_0} (f_k \otimes x_k \otimes 1_\mf p) (\grave W_{t_0})^*  & \approx_{\frac \e{10}} f_k \otimes 1_\mf p \otimes x_k \qquad (k = 1,2,\cdots , N) .
	\end{aligned}
	\end{eqnarray}
	The unitaries $\acute W_{t_0}$ and $\grave W_{t_0}$ are defined in \eqref{eq:W-def}. As observed in (\ref{eq:W-def}) the scalar part of $\acute W_{t_0}$ and $\grave W_{t_0}$ is $1$, so that we may approximate 
	\begin{eqnarray}
		\label{eq:W-approx-BHL}
			\begin{aligned}
		\grave  W_{t_0} -  1 \approx_{\frac \e {100}} \sum_{i=1}^K h_i \otimes a_i \otimes b_i , \\  
		\acute  W_{t_0} -1 \approx_{\frac \e {100}} \sum_{i=1}^K k_i \otimes c_i \otimes d_i, 
			\end{aligned}
	\end{eqnarray}
	where $h_i \in C_0[0,1), \ a_i,b_i  \in M_\mf p$ and $k_i \in C_0(0,1], \ c_i,d_i \in M_\mf q$. Furthermore, we clearly may assume that the elements $a_i,b_i,c_i,d_i$ are normalized. We then define 
	$$
		\cG \coloneqq \{h_i \otimes a_i\}_{i=1}^K \cup \{k_i \otimes c_i\}_{i=1}^K \cup \{\iota \} \subseteq Z_{\mf p, \mf q},
	$$
	 where $\iota$ is the canonical generator of $C_0(0,1]$. Let $0<\delta < \frac \e {100K}$ be as in Lemma \ref{lem:BHL-central},  with $\e$ replaced by $\frac \e {100}$ and $\cF$ replaced by $\{f_k\}_{k=1}^N \cup \{g_k\}_{k=1}^N$. Let us check that this choice of $\cG$ and $\delta$ works.
	\par To this end assume that $\varphi \colon Z_{\mf p, \mf q} \to U$ is a unital $\Star$-homomorphism, where $U \cong U \otimes M_\mf p \otimes M_\mf q$, and $u \in U$ is a unitary such that 
	\begin{equation}
		\label{eq:u-commutes-G}
		\norm{u\varphi(g)u^* - \varphi(g)} < \delta \qquad ( g \in \cG).
	\end{equation}
	Since $\iota \in \cG$, it follows from the choice of $\delta$, that there exists a unitary path $(u_t)_{t \in [0,1]}$ in $U$, such that  $u_0 = 1_U, \ u_1 = u$ and 
	\begin{equation}
		\label{eq:central-estimate}
		\norm{u_t\varphi(f)u_t^* - \varphi(f)} < \frac \e {100} \qquad (f \in \{f_k\}_{k=1}^N \cup \{g_k\}_{k=1}^N ).
	\end{equation}		
	Let us define  a unitary path 
	$$
	 Z_t \coloneqq (\acute U_{t_0}^\varphi)^* (\grave U_{t_0}^\varphi)^*  (u_t \otimes 1_{\mf p \mf q}) \acute U_{t_0}^{\varphi} \grave U_{t_0}^{\varphi}, \qquad (t \in [0,1]),
	$$
	with $\grave U_{t_0}^\varphi$ and $\acute U_{t_0}^\varphi$ as in (\ref{eq:U-def-pq}). We show that $Z_t$ commutes sufficiently well with $\varphi( \cF)$. Let us check this for $\acute \cF$. The computation for $\grave \cF$ is analogous. For $t \in [0,1]$  and $k = 1,2,\cdots, N$ we get:
	\begin{align*}
	 & 	\ad(Z_t) \circ (\acute \varphi (g_k \otimes y_k) \otimes 1_{\mf p \mf q }) \\ 
	 	\overset{\phantom{(\ref{eq:twist}),(\ref{eq:twist-estimate}) \quad }} {=_{\phantom{\frac \e {100}}}}  &  (\acute U_{t_0}^\varphi)^* (\grave U_{t_0}^\varphi)^*  (u_t \otimes 1_{\mf p \mf q}) \acute U_{t_0}^{\varphi} \grave U_{t_0}^{\varphi} (\acute \varphi (g_k \otimes y_k) \otimes 1_{\mf p \mf q})  (\grave U_{t_0}^{\varphi})^* (\acute U_{t_0}^{\varphi})^* (u_t^* \otimes 1_{\mf p \mf q}) \grave U_{t_0}^{\varphi} \acute U_{t_0}^{\varphi} \\
	 	 \overset{(\ref{eq:twist}),(\ref{eq:twist-estimate})\quad }{ \approx_{\frac \e {100}}} 	&  (\acute U_{t_0}^{\varphi})^* (\grave U_{t_0}^{\varphi})^*  (u_t\otimes 1_{\mf p \mf q}) (\varphi(g_k) \otimes 1_\mf p \otimes y_k) (u_t^* \otimes 1_{\mf p \mf q}) \grave U_{t_0}^{\varphi} \acute U_{t_0}^{\varphi} \\
	 	 \overset{\phantom{(2)}(\ref{eq:central-estimate}) \phantom{(2.18)}}{ \approx_{\frac \e {100}} } &  (\acute U_{t_0}^{\varphi})^* (\grave U_{t_0}^{\varphi})^*  (\varphi (g_k) \otimes 1_\mf p \otimes y_k)   \grave U_{t_0}^{\varphi} \acute U_{t_0}^{\varphi} \\
	 	 \overset{(\ref{eq:twist}),(\ref{eq:twist-estimate}) \quad }{ \approx_{\frac \e {100}}} & \acute \varphi(g_k \otimes y_k) \otimes 1_{\mf p \mf q}.
	 \end{align*}
	It follows that  
	\begin{equation}
		\label{eq:almost-commuting-unitary-path}
		\norm{Z_t \varphi(f) Z_t^* - \varphi(f)} < \frac \e {30} \qquad (f \in \cF , \ t \in [0,1]).
	\end{equation}		
	Next, we show that $Z_1$ is close to $u \otimes 1_{\mf p \mf q}$. To this end, observe that 
	\begin{eqnarray}
		\label{eq:almost-u}	
			\begin{aligned}
		\norm{(\grave U_{t_0}^\varphi)^* (u \otimes 1_{\mf p \mf q}) \grave U_{t_0}^\varphi - u \otimes 1_{\mf p \mf q} } < \frac \e {30}  , \\  \norm{(\acute  U_{t_0}^\varphi)^* (u \otimes 1_{\mf p \mf q}) \acute U_{t_0}^\varphi - u  \otimes 1_{\mf p \mf q}} < \frac \e {30}.
			\end{aligned}
	\end{eqnarray}
		Let us prove that (\ref{eq:almost-u}) holds for $\grave U_{t_0}^\varphi$. The computation for $\acute U_{t_0}^\varphi$ is similar. Since the image of the scalar part is central it suffices to show that (recall the definition of $\grave U_{t_0}^\varphi$ from (\ref{eq:U-def-pq})): 
		$$
			(u \otimes 1_{\mf p \mf q}) \cdot (\grave \varphi \otimes \imath_{(\mf p, \mf q)}^{[1]})^+(\grave W_{t_0} -1) \approx_{\frac \e {30}} (\grave \varphi \otimes\imath_{(\mf p, \mf q)}^{[1]})^+(\grave W_{t_0} -1) \cdot (u \otimes 1_{\mf p \mf q}).
		$$
		The following calculation shows that this is indeed true:
		\begin{align*}
			(u \otimes 1_{\mf p \mf q}) \cdot (\grave \varphi \otimes \imath_{(\mf p, \mf q)}^{[1]})^+(\grave W_{t_0} -1) &  
			 \overset{(\ref{eq:W-approx-BHL}) \quad}{ \approx_{\frac \e {100}}} ( u \otimes 1_{\mf p \mf q})  \cdot  (\grave \varphi \otimes \imath_{(\mf p, \mf q)}^{[1]})^+ \left ( \sum_{i=1}^K    h_i \otimes a_i \otimes b_i \right )   \\
			 & \overset{\phantom{(2.11) \quad }}{=_{\phantom{\frac \e {100}}}}  \sum_{i=1}^K  u \grave \varphi(h_i \otimes a_i)   \otimes b_i \otimes 1_{\mf q}      \\
			&  \overset{(\ref{eq:u-commutes-G}) \quad }{  \approx_{K \delta }} \sum_{i=1}^K    \grave \varphi(h_i \otimes a_i) u   \otimes b_i \otimes 1_{\mf q}    \\
			& \overset{\phantom{(2.11) \quad }}{=_{\phantom{\frac \e {100}}}} (\grave \varphi \otimes \imath_{(\mf p, \mf q)}^{[1]})^+ \left ( \sum_{i=1}^K h_i \otimes a_i \otimes b_i \right ) \cdot (u \otimes 1_{\mf p \mf q}) \\
		& 	 \overset{(\ref{eq:W-approx-BHL}) \quad}{  \approx_{\frac \e {100}}}    (\grave \varphi \otimes \imath_{(\mf p, \mf q)}^{[1]})^+(\grave W_{t_0} -1) \cdot ( u \otimes 1_{\mf p \mf q}).
		\end{align*}
		Since $K\delta < \frac \e {100}$, the claim follows. We are now ready to construct the desired unitary path in $U$. By \cite[Remark 2.7]{TW}, there exists a unital $\Star$-homomorphism $\theta \colon U \otimes M_\mf p \otimes M_\mf q \to U$ such that 
	$$
		\theta(x \otimes 1_{\mf p \mf q}) \approx_{\frac \e {10}} x \qquad ( x \in \varphi(\cF) \cup \{u \}).
	$$
	Let $(W_t)_{t \in [0,1]}$ be given by  $W_t \coloneqq \theta(Z_t)$. Then,
	\begin{align*}
		W_t \varphi(f) W_t^* & \approx_{\frac \e {10}} \theta(Z_t (\varphi(f) \otimes 1_{\mf p \mf q}) Z_t^*)  \\
	& \approx_{\frac \e {30} } \theta(\varphi(f) \otimes 1_{\mf p \mf q})  \\
	& \approx_{\frac \e {10}} \varphi(f),
	\end{align*}
	for all $f \in \cF$ and  $t \in [0,1]$. Next, by (\ref{eq:almost-u}), it follows that   $\norm{Z_1 - u \otimes 1_{\mf p \mf q}} < \frac \e {15}$, so that $W_1 = \theta(Z_1) \approx_{\frac \e {15}} \theta(u \otimes 1_{\mf p \mf q}) \approx_{\frac \e {10}} u$.  By a standard functional calculus argument, we may extend $(W_t)_{t \in [0,1]}$ to a unitary path $(W_t)_{t \in [0,2]}$ such that $W_2 = u$ and $\norm{W_t - W_1} < \frac \e 6$, for $t \in [1,2]$. Then 
	$$
		\norm{W_t \varphi(f) W_t^* - \varphi(f)} < \e \qquad (t \in [0,2], \  f \in \cF ).
	$$
	This finishes the proof.
\end{proof}

\begin{theorem}
	\label{th:uniqueness}
	Let $\mf p$ and $\mf q$ be supernatural numbers of infinite type. Assume that $\varphi, \psi  \colon Z_{\mf p, \mf q} \to U$ are standard $\Star$-homomorphisms, where $U$ is a $\T{UHF}$-algebra of infinite type that absorbs $M_\mf p$ and $M_\mf q$ tensorially. Then  $\varphi \sim_{\T{asu}} \psi$.
\end{theorem}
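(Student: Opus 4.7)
The plan is to combine the approximate unitary equivalence from Lemma \ref{lem:aue-dimension-drop} with the Basic Homotopy Lemma (Lemma \ref{lem:BHL-dimension-drop}) in a one-sided approximate intertwining argument that produces the required continuous unitary path $(u_t)_{t \in [0,1)}$ with $u_0 = 1_U$. The idea is to promote a discrete sequence of approximating unitaries to a continuous path by inserting, between consecutive terms, a homotopy that nearly commutes with $\psi$ on a prescribed finite set.

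To set this up, fix an increasing sequence of finite sets $\cF_n \subseteq Z_{\mf p, \mf q}$ with dense union. For each $n \geq 1$, apply Lemma \ref{lem:BHL-dimension-drop} to $\e = 2^{-n}$ and $\cF_n$ to obtain $\delta_n \in (0, 2^{-n})$ and a finite set $\cG_n \subseteq Z_{\mf p, \mf q}$, and enlarge $\cF_{n+1}$ if necessary so that $\cG_n \cup \cF_n \subseteq \cF_{n+1}$. Using Lemma \ref{lem:aue-dimension-drop}, select unitaries $v_n \in U$ with
\[
\|v_n \varphi(a) v_n^* - \psi(a)\| < \delta_n/3 \qquad (a \in \cF_n).
\]
For $a \in \cG_n \subseteq \cF_{n+1}$ the unitary $w_n := v_{n+1} v_n^*$ then satisfies
\[
\|w_n \psi(a) w_n^* - \psi(a)\| \leq \|v_{n+1} \varphi(a) v_{n+1}^* - \psi(a)\| + \|v_n \varphi(a) v_n^* - \psi(a)\| < \delta_n,
\]
so Lemma \ref{lem:BHL-dimension-drop}, applied to $\psi$, yields a unitary path $(z_t^{(n)})_{t \in [0,1]}$ in $U$ with $z_0^{(n)} = 1_U$, $z_1^{(n)} = w_n$, and $\|z_t^{(n)} \psi(a) (z_t^{(n)})^* - \psi(a)\| < 2^{-n}$ for $a \in \cF_n$, $t \in [0,1]$. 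Setting $\gamma_n(t) := z_t^{(n)} v_n$ gives a continuous unitary path from $v_n$ to $v_{n+1}$ satisfying $\|\gamma_n(t) \varphi(a) \gamma_n(t)^* - \psi(a)\| < 2 \cdot 2^{-n}$ for all $a \in \cF_n$ and $t \in [0,1]$.

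The desired path is obtained by concatenating an arbitrary continuous unitary path from $1_U$ to $v_1$ on $[0, 1/2]$ (using that the unitary group of a $\T{UHF}$-algebra is connected) with reparametrizations of the $\gamma_n$ on the intervals $[1 - 2^{-n}, 1 - 2^{-n-1}]$ for $n \geq 1$; continuity at the matching endpoints is automatic since $\gamma_n(1) = v_{n+1} = \gamma_{n+1}(0)$. A straightforward density argument using $\bigcup_n \cF_n = Z_{\mf p, \mf q}$ then upgrades convergence on $\bigcup_n \cF_n$ to convergence on all of $Z_{\mf p, \mf q}$, establishing $\varphi \sim_{\T{asu}} \psi$. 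The main technical point is simply the correct coordination of the quadruples $(\cF_n, \delta_n, \cG_n, v_n)$: once $\cG_n \subseteq \cF_{n+1}$ and $v_n$ is sufficiently good on $\cG_n$, the output of one instance of the Basic Homotopy Lemma feeds cleanly into the next link of the chain, and no further ingredients beyond Lemmas \ref{lem:aue-dimension-drop} and \ref{lem:BHL-dimension-drop} are needed.
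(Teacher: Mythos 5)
Your proposal follows exactly the strategy the paper intends: the paper's proof of this theorem is deliberately terse, saying only that it follows from Lemma~\ref{lem:aue-dimension-drop} and Lemma~\ref{lem:BHL-dimension-drop} ``by a standard argument'' modeled on \cite[Theorem~2.2]{DW}, and you have spelled that argument out. There is, however, a small but genuine gap in your coordination of the quadruples $(\cF_n,\delta_n,\cG_n,v_n)$. Your key estimate
\[
\norm{w_n \psi(a) w_n^* - \psi(a)} \leq \norm{v_{n+1}\varphi(a)v_{n+1}^*-\psi(a)} + \norm{v_n\varphi(a)v_n^*-\psi(a)} < \delta_n \qquad (a \in \cG_n)
\]
requires both summands to be small for $a \in \cG_n$. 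The first is fine: $\cG_n \subseteq \cF_{n+1}$, so it is bounded by $\delta_{n+1}/3$. The second has only been established for $a \in \cF_n$, and you do not arrange $\cG_n \subseteq \cF_n$; indeed $\cG_n$ is produced \emph{from} $\cF_n$ by the Basic Homotopy Lemma and in general contains new elements, so as written the second summand is uncontrolled on $\cG_n$. The fix is easy and in the spirit of what you already wrote: $\cG_n$ is determined before you choose $v_n$, so simply invoke Lemma~\ref{lem:aue-dimension-drop} to deliver $v_n$ satisfying $\norm{v_n\varphi(a)v_n^*-\psi(a)} < \delta_n/3$ for all $a \in \cF_n \cup \cG_n$, and insist the $\delta_n$ be decreasing (harmless, since shrinking $\delta_n$ only strengthens the hypothesis of Lemma~\ref{lem:BHL-dimension-drop}). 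With this correction your argument---bridging $v_n$ to $v_{n+1}$ by $\gamma_n(t) = z^{(n)}_t v_n$, reparametrizing onto $[1-2^{-n}, 1-2^{-n-1}]$, and closing by density---goes through verbatim and establishes $\varphi \sim_{\T{asu}} \psi$.
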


\begin{proof}
	By Lemma \ref{lem:aue-dimension-drop} we know that $\varphi \approx_{\T u} \psi$. Furthermore we have established a Basic Homotopy Lemma (Lemma \ref{lem:BHL-dimension-drop}) for $\varphi$ and $\psi$. By a standard argument, these two ingredients combined show that $\varphi  \sim_{\T{asu}} \psi$. The proof is exactly the same as in \cite[Theorem 2.2]{DW}. See also the proof of Theorem \ref{th:aue-endo} for a similar argument.
\end{proof}

\begin{theorem}
	\label{th:aue-endo}
	Let $\mf p$ and $\mf q$ be supernatural numbers of infinite type. Assume that $\varphi, \psi \colon Z_{\mf p, \mf q} \to Z_{\mf p, \mf q}$ are standard $\Star$-homomorphisms. Then $\varphi \approx_{\T u} \psi$.
\end{theorem}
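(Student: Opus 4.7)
Fix a finite $\cF \subseteq Z_{\mf p, \mf q}$ and $\e > 0$. The idea is to produce, pointwise on $[0,1]$, witnesses of $\varphi_s \approx_{\T u} \psi_s$ in UHF codomains via the uniqueness lemmas already proved, and then to stitch them into a single continuous unitary in $Z_{\mf p, \mf q}$ by means of Lemma \ref{lem:BHL-dimension-drop}.

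For each $s \in [0,1]$ the evaluation $\varphi_s$ (resp.\ $\psi_s$) is a standard unital $\Star$-homomorphism from $Z_{\mf p, \mf q}$ into a UHF algebra $U_s$ of infinite type: $U_s = M_\mf p \otimes M_\mf q$ for $s \in (0,1)$, while $U_0 = M_\mf p$ and $U_1 = M_\mf q$. Each $U_s$ tensorially absorbs $M_\mf p$ or $M_\mf q$, so Lemma \ref{lem:aue-dimension-drop} yields $\varphi_s \approx_{\T u} \psi_s$ in $U_s$. Let $\delta$ and $\cG \subseteq Z_{\mf p, \mf q}$ be those produced by Lemma \ref{lem:BHL-dimension-drop} for $(\e/3, \cF)$ and shrink $\delta$ so that $\delta \le \e/3$. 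By uniform continuity, choose a partition $0 = s_0 < s_1 < \cdots < s_n = 1$ with $n \ge 2$ so fine that $\varphi_s, \psi_s$ vary on each $[s_i, s_{i+1}]$ by less than $\delta/10$ on $\cF \cup \cG$, and pick unitaries $u_i \in U_{s_i}$ with $\|u_i \varphi_{s_i}(g) u_i^* - \psi_{s_i}(g)\| < \delta/10$ for $g \in \cF \cup \cG$. Dilate the endpoints as $u_0 \otimes 1_\mf q$ and $1_\mf p \otimes u_n$, so that every $u_i$ can be viewed as a unitary in $U := M_\mf p \otimes M_\mf q$.

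On each $[s_i, s_{i+1}]$ select an interior base point $s^*_i \in (0,1) \cap [s_i, s_{i+1}]$; then $\varphi_{s^*_i} \colon Z_{\mf p, \mf q} \to U$ satisfies the hypotheses of Lemma \ref{lem:BHL-dimension-drop}, and a triangle inequality using the intertwining bounds for $u_i, u_{i+1}$ together with continuity shows that $v_i := u_{i+1}^* u_i$ approximately commutes with $\varphi_{s^*_i}(\cG)$ within $\delta$. Lemma \ref{lem:BHL-dimension-drop} then yields a unitary path $(v_i^t)_{t \in [0,1]}$ in $U$ from $1_U$ to $v_i$ with $\|v_i^t \varphi_{s^*_i}(f) (v_i^t)^* - \varphi_{s^*_i}(f)\| < \e/3$ for $f \in \cF$. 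Set $w(s) := u_{i+1} v_i^{t(s)}$ on $[s_i, s_{i+1}]$ with $t$ affinely mapping $s_i \mapsto 1$ and $s_{i+1} \mapsto 0$. The resulting $w \colon [0,1] \to U$ is a continuous path of unitaries with $w(s_i) = u_i$, and a routine triangle estimate yields $\|w(s) \varphi_s(f) w(s)^* - \psi_s(f)\| < \e$ for $f \in \cF$ uniformly in $s$. The prescribed boundary values $w(0) = u_0 \otimes 1_\mf q \in M_\mf p \otimes 1_\mf q$ and $w(1) = 1_\mf p \otimes u_n \in 1_\mf p \otimes M_\mf q$ place $w$ in $Z_{\mf p, \mf q}$.

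The main obstacle is the tension between the $Z_{\mf p, \mf q}$-boundary conditions at $s = 0, 1$---where the relevant codomains $M_\mf p, M_\mf q$ absorb only one of the two UHF factors---and Lemma \ref{lem:BHL-dimension-drop}, which requires a UHF codomain absorbing both $M_\mf p$ and $M_\mf q$. This is bridged by dilating the boundary unitaries into $M_\mf p \otimes M_\mf q$ and applying Lemma \ref{lem:BHL-dimension-drop} only at interior base points $s^*_i \in (0,1)$; the correct boundary behavior of $w$ is then inherited automatically from the choice of $u_0$ and $u_n$.
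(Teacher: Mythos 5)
Your argument is correct and is essentially the proof the paper gives: run Lemma \ref{lem:aue-dimension-drop} along a fine partition to obtain pointwise intertwiners, bridge the transition unitaries $u_{i+1}^* u_i$ via the Basic Homotopy Lemma \ref{lem:BHL-dimension-drop}, and stitch the resulting paths into a single unitary in $Z_{\mf p, \mf q}$ respecting the endpoint conditions. The only (inessential) deviation is your use of interior base points $s_i^*$ for the Homotopy Lemma --- the paper applies it directly at $\varphi_{i/N}$ with codomain $M_\mf p \otimes M_\mf q$, which is fine since that algebra still absorbs both factors even though $\varphi_0$ and $\varphi_1$ land in proper subalgebras --- while your endpoint applications of Lemma \ref{lem:aue-dimension-drop} with codomains $M_\mf p$ and $M_\mf q$ give a slightly more explicit justification for $u_0 \in M_\mf p \otimes 1_\mf q$ and $u_n \in 1_\mf p \otimes M_\mf q$ than the paper's bare \emph{we may assume}.
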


\begin{proof}
	Let $\cF \subseteq Z_{\mf p, \mf q}$ be finite and $\e > 0$. Let $\cG \subseteq Z_{\mf p, \mf q}$ be finite and $0 < \delta < \e $, such that the conclusion of Lemma \ref{lem:BHL-dimension-drop} holds for $\cF$ and $\e$ replaced by $\frac \e {10}$.  Let $N \in \N$ such that 
	$$
		\abs{s-t} \leq  \frac 1 N \quad \Rightarrow \quad \norm{\gamma_s(f) - \gamma_t(f)} < \frac \delta {10} \qquad (f \in \cF \cup \cG, \ \gamma \in \{\varphi,\psi\} ).
	$$
	 Since $\varphi_s, \psi_s \colon Z_{\mf p, \mf q} \to M_\mf p \otimes M_\mf q$ are standard for each $s \in [0,1]$, we know from Lemma \ref{lem:aue-dimension-drop} that 
	$$
		\varphi_{\frac i N } \approx_{\T u} \psi_{\frac i N} \qquad (i = 0,1,2,\cdots,N-1,N).
	$$
	It follows that there exist unitaries $u_0,u_1,\cdots, u_{N-1},u_N \in M_\mf p \otimes M_\mf q$, such that 
	$$
		\norm{u_i \varphi_{\frac i N}(g) u_i^*  - \psi_{\frac i N}(g)} < \frac \delta 5 \qquad (g \in \cG).
	$$
	We may assume that $u_0 \in M_\mf p \otimes 1_\mf q$ and $u_N \in 1_\mf p \otimes M_\mf q$. Now, observe that for $i = 0,1,2,\cdots, N-1$, we have 
	$$
		\norm{u_{i+1}^*u_i \varphi_{\frac i N}(g) u_i^* u_{i+1} - \varphi_{\frac i N}(g)} < \delta \qquad (g \in \cG).
	$$
	By the choice of $\cG$ and $\delta$, it follows that there exist unitary paths $(u_{i,t})_{t \in [0,1]}$ for $i=0,1,\cdots,N-1$, with $u_{i,0} = 1_{\mf p \mf q}, \ u_{i,1} = u_{i+1}^*u_i$ and such that:
	$$
		\norm{u_{i,t} \varphi_{\frac i N}(f) u_{i,t}^* - \varphi_{\frac i N}(f)} < \frac \e {10} \qquad (t \in [0,1], \ f \in \cF).
	$$
	This defines a unitary $u$ in $Z_{\mf p, \mf q}$ by 
	$$
		u_t \coloneqq u_i u_{i,Nt-i}^* \quad \text{ for } \quad  t \in \left [ \frac i N , \frac {i+1} N \right ].
	$$
	For $f \in \cF $ and $t \in [i/N, (i+1)/N]$, we get 
	\begin{align*}
		u_t \varphi_t(f) u_t^* & \approx_{\frac \delta {10}}  	u_t \varphi_{\frac iN}(f) u_t^*   = u_ i u_{i,Nt-i}^* \varphi_{\frac iN}(f) u_{i,Nt-i} u_i^* \\
		& \approx_{\frac \e {10} } u_i \varphi_{\frac i N}(f) u_i^*  \approx_{\frac \delta 5 } \psi_{\frac i N}(f) \\
		& \approx_{\frac \delta {10}} \psi_t (f).
	\end{align*}
	As this holds on every interval $[i/N, (i+1)/N]$, it follows that 
	$$
		\norm{u\varphi(f)u^* - \psi(f)} < \e \qquad (f \in \cF).
	$$
	This proves that $\varphi \approx_{\T u} \psi$.
\end{proof}

\section{Existence}

In this section we show how to construct standard $\Star$-homomorphisms from generalized dimension drop algebras into certain UHF-algebras. Furthermore, using our asymptotic uniqueness result (Theorem \ref{th:uniqueness}), we prove the existence of so-called \textit{unitarily suspended} $\Star$-homomorphisms between generalized dimension drop algebras (Theorem \ref{th:existence-unitarily-suspended}). They will be the key ingredient in proving the main theorem of this paper (Theorem \ref{th:SSA}).
\ \\
\begin{convention} Let $x \in M_n$ and $y \in M_m$, where $n,m \in \N$.  Whenever we identify $M_n \otimes M_m$ with $M_{nm}$,  we do this via 
		$$
			x \otimes y = (x \cdot y_{ij})_{i,j=1}^m.
		$$
		For $p \in \N$, define the UHF-algebra $M_{p^\infty}$ to be the inductive limit of 
	$$
		M_{p^n} \to M_{p^{n+1}} : x \mapsto  1_p \otimes x. 
	$$
\end{convention}

\begin{lemma}
	\label{lem:lebesgue-contractions}
	 Let $A$ be a unital $\cstar$-algebra. Assume $0 < \alpha \leq \beta \leq 1$. If $h \in A$ is a Lebesgue contraction (see Definition \ref{def:Lebesgue-Contraction}), then the element
	$$
		h(\alpha,\beta) \coloneqq \alpha 1_A + (\beta-\alpha) h
	$$
	has spectrum $[\alpha,\beta]$ and 
	$$
		\tau(f(h(\alpha,\beta))) = \frac 1 {\beta-\alpha} \int_\alpha^\beta f(t) \ dt \qquad \big ( f \in C_0(0,1] , \ \tau \in T(A) \big ) .
	$$
\end{lemma}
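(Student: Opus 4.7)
The plan is to proceed in three short steps: identify the spectrum of $h$ itself, transfer this to $h(\alpha,\beta)$ via the continuous functional calculus, and then deduce the trace identity by a linear change of variables.

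First I would establish that $\mathrm{sp}(h) = [0,1]$. The inclusion $\mathrm{sp}(h) \subseteq [0,1]$ is immediate since $h$ is a positive contraction. For the reverse, if $\mathrm{sp}(h)$ were a proper closed subset of $[0,1]$ then its complement in $[0,1]$ would contain a nonempty open interval $(a,b)$ with $a > 0$; picking a nonzero nonnegative $f \in C_0(0,1]$ supported in $(a,b)$ gives $f(h) = 0$ but $\int_0^1 f(t)\, dt > 0$, contradicting the Lebesgue contraction property applied to any $\tau \in T(A)$. Writing $h(\alpha,\beta) = g(h)$ with $g(t) = \alpha + (\beta-\alpha)t$, the spectral mapping theorem then yields $\mathrm{sp}(h(\alpha,\beta)) = g([0,1]) = [\alpha,\beta]$.

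For the trace formula I would first extend the defining identity of a Lebesgue contraction from $C_0(0,1]$ to all of $C([0,1])$: given $\phi \in C([0,1])$, the function $\phi - \phi(0)$ lies in $C_0(0,1]$, so using $\tau(1_A) = 1$,
$$
\tau(\phi(h)) = \tau\bigl((\phi - \phi(0))(h)\bigr) + \phi(0) = \int_0^1 \bigl(\phi(t) - \phi(0)\bigr)\, dt + \phi(0) = \int_0^1 \phi(t)\, dt.
$$
For $f \in C_0(0,1]$, which is continuous on $\mathrm{sp}(h(\alpha,\beta)) = [\alpha,\beta] \subset (0,1]$, we have $f(h(\alpha,\beta)) = (f \circ g)(h)$. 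Applying the extended identity to $\phi = f \circ g$ and substituting $s = \alpha + (\beta-\alpha)t$ yields
$$
\tau(f(h(\alpha,\beta))) = \int_0^1 f\bigl(\alpha + (\beta-\alpha)t\bigr)\, dt = \frac{1}{\beta - \alpha}\int_\alpha^\beta f(s)\, ds.
$$

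There is no genuine obstacle here; the lemma is essentially the spectral mapping theorem followed by a linear substitution, together with the minor observation that the Lebesgue-contraction formula automatically extends from $C_0(0,1]$ to $C([0,1])$ because $\tau$ is unital.
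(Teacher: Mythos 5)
Your proof is correct, and it is more explicit than the paper's own argument, which simply verifies the trace identity for polynomials via the Binomial Theorem and then invokes an approximation argument (leaving the spectrum claim as an unspoken corollary). You instead identify $\mathrm{sp}(h) = [0,1]$ directly from the Lebesgue-contraction property, pass to $h(\alpha,\beta)$ via the spectral mapping theorem, and obtain the trace formula by composing with the affine map $g$ and substituting. The two routes have complementary virtues: the paper's polynomial computation is self-contained and avoids invoking the spectral mapping theorem, while your version makes the role of the spectrum transparent and isolates the minor but genuinely necessary point that the defining identity of a Lebesgue contraction extends from $C_0(0,1]$ to $C([0,1])$ because tracial states are unital --- a step the paper glosses over. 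One small remark: your claim that a proper closed $\mathrm{sp}(h) \subsetneq [0,1]$ leaves a complementary open interval $(a,b)$ with $a>0$ is fine, but it is worth noting in passing that even if the gap touches $0$ (i.e.\ the complement is $[0,\varepsilon)$) one can still shrink to a subinterval $(a,b)$ with $a>0$, so the case analysis is genuinely exhaustive.
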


\begin{proof}
	Using the Binomial Theorem, one easily checks that the the conclusion of the lemma holds for polynomials. By an approximation argument, the statement follows.
\end{proof}

\begin{remark}
	\label{rem:cone-universal}
	(\cite[{}1.2.3]{WICOV}) Let $p \geq 2$ be a natural number. For $i,j = 2,\cdots,p$ we consider the relations
	\[
		\norm{x_i} \leq 1, \ x_ix_j = 0 \ (i \neq j), \ x_i^*x_j = \delta_{i,j} \cdot x_2^*x_2. 
			\tag{$\mathcal R_p$}
	\]
	Then 
	$$
		C_0(0,1] \otimes M_p \cong \cstar(x_2,x_3,\cdots,x_p \mid \mathcal R_p).
	$$	
	In particular, if $x_2,x_3,\cdots,x_p$ are elements in some $\cstar$-algebra $A$ satisfying $(\mathcal R_p)$, then the assignment 
	$$
		\varphi (e_{i1}) = x_i^*x_i \qquad ( i=2,3,\cdots,p)
	$$
	defines a c.p.c. order zero map $\varphi \colon M_p \to A$. Indeed, the elements $ \{ \iota^\frac 1 2  \otimes e_{i1} \}_{i=2}^p$ satisfy the relations $(\mathcal R_p)$ and hence  $\overline \varphi : C_0(0,1] \otimes M_p \to A : \iota^\frac 12 \otimes e_{i1} \mapsto x_i$ defines a $\Star$-homomorphism. Note that 
	$$
		\varphi(e_{11}) = \varphi(e_{i1}^* e_{i1}) = x_i^*x_i = x_2^*x_2 \qquad (i = 2,3,\cdots,p). 
	$$
	Furthermore, for $i,j =2,3,\cdots,p$ we get 
	$$
		\varphi(e_{ij}) = \varphi(e_{i1}e_{j1}^*) = x_ix_j^*.
	$$
\end{remark}	

\begin{lemma}
	\label{lem:universal-elements}
	Let $p,q \geq 2$ be natural numbers. Then $M_{q^\infty}$ contains elements $x_2,x_3,\cdots,x_p$ satisfying $(\cR_p)$  such that $x_2^*x_2$ has $\frac 1 p$ Lebesgue spectral measure.
\end{lemma}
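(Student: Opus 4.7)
The strategy is to build finite-level approximations $x_i^{(n)} \in M_{q^\infty}$ that satisfy $(\mathcal R_p)$ exactly but whose spectral data is only approximately $\frac 1 p$ Lebesgue, and then pass to the limit via an approximate intertwining. For each $n$ write $q^n = pN_n + r_n$ with $0 \le r_n < p$, and in $M_{q^n}$ choose mutually orthogonal rank-$N_n$ projections $e_1^{(n)}, \ldots, e_p^{(n)}$ together with partial isometries $w_i^{(n)}$ realising $e_1^{(n)} \sim e_i^{(n)}$ for $i = 2, \ldots, p$. Identifying $M_{q^\infty} \cong M_{q^n} \otimes M_{q^\infty}$ (both sides being UHF with supernatural number $q^\infty$), fix a Lebesgue contraction $k$ in the second tensor factor; such $k$ is built explicitly by $k = \sum_{m \ge 1} q^{-m}(1^{\otimes(m-1)} \otimes D_q \otimes 1^{\otimes \infty})$ with $D_q = \diag(0, 1, \ldots, q-1) \in M_q$, whose moments $\tau(k^j) = 1/(j+1)$ follow from independence of the $D_q^{(m)}$ under the unique trace together with the base-$q$ expansion of the uniform random variable on $[0,1]$. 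Define $x_i^{(n)} := w_i^{(n)} \otimes k^{1/2}$. A direct computation using $w_i^{(n)} = e_i^{(n)} w_i^{(n)} = w_i^{(n)} e_1^{(n)}$ gives $(w_i^{(n)})^* w_j^{(n)} = \delta_{ij} e_1^{(n)}$ and $w_i^{(n)} w_j^{(n)} = 0$ for $j \ge 2$, so the $x_i^{(n)}$ satisfy $(\mathcal R_p)$ exactly; moreover $(x_2^{(n)})^* x_2^{(n)} = e_1^{(n)} \otimes k$ has trace $\tau(f(\cdot)) = (N_n/q^n) \int_0^1 f$ for $f \in C_0(0,1]$, which tends to $\frac 1 p \int f$ as $n \to \infty$.

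To remove the approximation, view the $x_i^{(n)}$ as generators of c.p.c.\ order zero maps $\varphi^{(n)} \colon M_p \to M_{q^\infty}$. The values $\varphi^{(n)}(e_{11})$ are positive contractions whose spectral measures differ by $O(1/q^n)$. A quantitative version of Lemma \ref{lem:!-cpc-order-zero} applied with $M_\mathfrak{p} = \mathbb{C}$---the error being tracked through the eigenvalue-function comparison of \cite{SK}---yields $\varphi^{(n)}(e_{11}) \approx \varphi^{(n+1)}(e_{11})$ in the approximate Murray-von Neumann sense with error $\epsilon_n \to 0$. Lemma \ref{lem:cpc-order-zero-e11} combined with the stable rank one of $M_{q^\infty}$ upgrades this to $\varphi^{(n)} \approx_{\T u} \varphi^{(n+1)}$ with controlled error, and a standard Elliott-type approximate intertwining produces unitaries $z_n \in M_{q^\infty}$ such that $\ad(z_n \cdots z_1) \circ \varphi^{(n)}$ is Cauchy pointwise on $M_p$. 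The pointwise limit $\varphi \colon M_p \to M_{q^\infty}$ is a c.p.c.\ order zero map, and continuity of the unique trace of $M_{q^\infty}$ forces $\varphi(e_{11})$ to have spectral measure \emph{exactly} $\frac 1 p$ Lebesgue. The desired elements are $x_i := \overline{\varphi}(\iota^{1/2} \otimes e_{i1})$.

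The main obstacle is rendering Lemma \ref{lem:!-cpc-order-zero} quantitatively effective---i.e.\ controlling the approximate unitary equivalence of positive contractions whose spectral measures are both close to, but not exactly, $\frac 1 p$ Lebesgue---which is routine once the eigenvalue-function machinery of \cite{SK} is unwound with bounds. A conceptually cleaner alternative sidesteps the quantitative issue altogether: the sequence $(\varphi^{(n)})$ defines a c.p.c.\ order zero map $\Phi \colon M_p \to (M_{q^\infty})_\omega$ which exactly satisfies $(\mathcal R_p)$ and, with respect to $\tau_\omega$, has $\Phi(e_{11})$ of exactly $\frac 1 p$ Lebesgue spectral measure; projectivity of the cone $C_0(0,1] \otimes M_p$ (or a reindexing exploiting the strong self-absorption of $M_{q^\infty}$) then lifts $\Phi$ back to a $*$-homomorphism into $M_{q^\infty}$ with the same tracial behaviour.
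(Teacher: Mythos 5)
Your plan takes a genuinely different route from the paper. The paper constructs the elements $x_2,\dots,x_p$ \emph{exactly} by an explicit telescoping decomposition inside $M_{q^\infty}\otimes M_{q^\infty}$: after setting up a base-$q$/base-$p$ division scheme ($q\beta_{i-1}=p\alpha_i+\beta_i$, etc.), it stacks rescaled Lebesgue contractions $\tilde h_i$ into mutually orthogonal corners of successive $M_{q^i}$ and sums them, so that the resulting $\bar h=\sum_i\bar h_i$ has $\tfrac1p$ Lebesgue spectral measure on the nose and no limiting argument is needed. You instead build finite-level approximants $\varphi^{(n)}$ satisfying $(\mathcal R_p)$ exactly but with trace distribution only $O(q^{-n})$-close to $\tfrac1p$ Lebesgue, and then pass to a limit.

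Your route (a) is sound in outline but requires genuine additional work that the current text only gestures at: Lemma~\ref{lem:!-cpc-order-zero} and \cite[Theorem 5.1]{SK} as stated apply to contractions with \emph{equal} eigenvalue functions, so you would need a quantitative comparison of the form ``if two positive contractions in a UHF-algebra have eigenvalue functions within $\epsilon$ in supremum norm, then they are approximately Murray--von Neumann equivalent up to $O(\epsilon)$''; that this holds is plausible, but proving it is about as much work as the paper's construction, which avoids the whole issue. The errors $O(q^{-n})$ are summable, so once such a quantitative lemma is in hand the Elliott intertwining does converge, and your trace-continuity argument correctly identifies the limiting spectral measure.

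Your route (b) as written has a gap. Projectivity of $C_0(0,1]\otimes M_p$ does lift $\Phi\colon M_p\to (M_{q^\infty})_\omega$ to a sequence of $\Star$-homomorphisms $\phi_n\colon C_0(0,1]\otimes M_p\to M_{q^\infty}$, but none of these individual lifts need have \emph{exactly} $\tfrac1p$ Lebesgue spectral measure; the condition $\tau_\omega(f(\Phi(e_{11})))=\tfrac1p\int f$ is an asymptotic statement along the ultrafilter, and it does not transfer to any single representative $\phi_n$. Saying the lift has ``the same tracial behaviour'' as $\Phi$ is precisely the point that would need proof, and it is equivalent to the quantitative uniqueness statement that route (a) requires. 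So route (b) does not sidestep the difficulty; it restates it.
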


\begin{proof}
	Let us recursively define two sequences of non-negative integers $(\alpha_i)_{i=1}^\infty$ and $(\beta_i)_{i=0}^\infty$. Set $\beta_0 \coloneqq 1 $ and define $\alpha_i$ and $\beta_i$ for $i\geq 1$ by 
	\begin{equation}
		\label{eq:recursion}
		q \beta_{i-1} = p\alpha_i + \beta_i,
	\end{equation}
	where $\beta_i \in \{0,1,2,\cdots,p-1\}$. A short computation using (\ref{eq:recursion})  shows that the coefficients $(\alpha_i)_{i=1}^\infty$ are chosen such that 
	\begin{equation}
		\label{eq:sum}
		\sum_{i=1}^\infty \frac{p \alpha_i}{q^i} = 1.
	\end{equation}
	For further reference, we denote the partial sums by $S_N$. We define recursively another sequence $(\delta_i)_{i=0}^\infty$  by $\delta_0 = 0$ and for $i \geq 1$ we let 
	\begin{equation}
		\label{eq:recursion-2}
		\delta_i := q ( \delta_{i-1} + p \alpha_i ).
	\end{equation}
	Using (\ref{eq:recursion-2}) one easily checks that 
	\begin{equation}
		\label{eq:recursion-3}
			q^{i+1} = q\beta_i + \delta_i \qquad (i \geq 0).
	\end{equation}
		In particular (\ref{eq:recursion-3}) implies that 
		\begin{equation}
			\label{eq:positive-remainder}
			\delta_{i-1} + p\alpha_i +\beta_i  =  q^i \qquad( i \geq 1).
		\end{equation}
		All this is made in such a way that we can subdivide the matrix algebra $M_{q^i}$ in three orthogonal parts, as indicated in Figure \ref{fig:contractions}.
	\begin{figure}[h]
			\centering
			\tikzset{every picture/.style={line width=0.25pt}} %set default line width to 0.75pt        

\tikzset{every picture/.style={line width=0.25pt}} %set default line width to 0.75pt        

\begin{tikzpicture}[x=0.3pt,y=0.3pt,yscale=-1,xscale=1]
%uncomment if require: \path (0,467); %set diagram left start at 0, and has height of 467

%Straight Lines
\draw    (240,180) -- (390.68,180) ;

%Straight Lines
\draw    (390,30) -- (390,180) ;

%Shape: Rectangle
\draw   (390.8,180) -- (540.68,180) -- (540.68,330) -- (390.8,330) -- cycle ;
%Straight Lines
\draw    (540.68,330) -- (660,330) ;

%Straight Lines
\draw    (540.68,450) -- (540.68,330) ;

%Shape: Brace
\draw   (210.2,30.4) .. controls (205.53,30.36) and (203.18,32.67) .. (203.14,37.34) -- (202.68,95.04) .. controls (202.63,101.71) and (200.27,105.02) .. (195.6,104.99) .. controls (200.27,105.02) and (202.57,108.37) .. (202.52,115.04)(202.54,112.04) -- (202.06,172.74) .. controls (202.02,177.41) and (204.33,179.76) .. (209,179.8) ;
%Shape: Brace
\draw   (210.2,180.2) .. controls (205.53,180.19) and (203.19,182.51) .. (203.18,187.18) -- (203.03,244.58) .. controls (203.02,251.25) and (200.68,254.57) .. (196.01,254.56) .. controls (200.68,254.57) and (203,257.91) .. (202.98,264.58)(202.99,261.58) -- (202.83,321.98) .. controls (202.82,326.65) and (205.14,328.99) .. (209.81,329) ;
%Shape: Brace
\draw   (209.8,329.8) .. controls (205.13,329.81) and (202.81,332.15) .. (202.82,336.82) -- (202.97,379.62) .. controls (202.99,386.29) and (200.67,389.63) .. (196,389.65) .. controls (200.67,389.63) and (203.01,392.95) .. (203.03,399.62)(203.02,396.62) -- (203.17,442.42) .. controls (203.19,447.09) and (205.53,449.41) .. (210.2,449.4) ;
%Shape: Grid
\draw  [draw opacity=0] (390.68,180) -- (540.68,180) -- (540.68,330) -- (390.68,330) -- cycle ; \draw   (390.68,180) -- (390.68,330)(428.68,180) -- (428.68,330)(466.68,180) -- (466.68,330)(504.68,180) -- (504.68,330) ; \draw   (390.68,180) -- (540.68,180)(390.68,218) -- (540.68,218)(390.68,256) -- (540.68,256)(390.68,294) -- (540.68,294) ; \draw    ;
%Shape: Brace
\draw   (378.78,180) .. controls (374.11,179.99) and (371.77,182.31) .. (371.76,186.98) -- (371.56,244.98) .. controls (371.54,251.65) and (369.2,254.97) .. (364.53,254.95) .. controls (369.2,254.97) and (371.52,258.31) .. (371.5,264.98)(371.51,261.98) -- (371.3,322.98) .. controls (371.29,327.65) and (373.61,329.99) .. (378.28,330) ;
%Shape: Brace
\draw   (390.63,336.35) .. controls (390.63,341.02) and (392.96,343.35) .. (397.63,343.35) -- (400.27,343.35) .. controls (406.94,343.35) and (410.27,345.68) .. (410.27,350.35) .. controls (410.27,345.68) and (413.6,343.35) .. (420.27,343.36)(417.27,343.36) -- (422.46,343.36) .. controls (427.13,343.36) and (429.46,341.03) .. (429.46,336.36) ;
%Shape: Rectangle
\draw  [line width=0.5]  (240,30) -- (660,30) -- (660,450) -- (240,450) -- cycle ;
%Shape: Brace
\draw   (119.2,31.4) .. controls (114.53,31.37) and (112.18,33.68) .. (112.15,38.35) -- (110.92,230.16) .. controls (110.87,236.82) and (108.52,240.14) .. (103.85,240.11) .. controls (108.52,240.14) and (110.83,243.48) .. (110.79,250.15)(110.81,247.15) -- (109.55,441.95) .. controls (109.52,446.62) and (111.83,448.97) .. (116.5,449) ;

% Text Node
\draw (315,105.32) node [scale=0.75]   {$0$};
% Text Node
\draw (600,390) node [scale=0.75]   {$0$};
% Text Node
\draw (166,106) node [scale=0.75]   {$\delta _{i-1}$};
% Text Node
\draw (167,256) node [scale=0.75]   {$p\alpha _{i}$};
% Text Node
\draw (169,390) node [scale=0.75]   {$\beta _{i}$};
% Text Node
\draw (345,256) node [scale=0.75]   {$p$};
% Text Node
\draw (408.36,370.4) node  [scale=0.75]  {$M_{\alpha _{i}}( M_{q^{\infty }})$};
% Text Node
\draw (77,240) node  [scale=0.75]  {$q^{i}$};
% Text Node
\draw (410,200) node [scale=0.75]   {$\tilde{h}_{i}$};
\end{tikzpicture}
			\caption{Subdivision of $M_{q^i}$ and the element  $\bar h_i$}
			\label{fig:contractions}
	\end{figure}
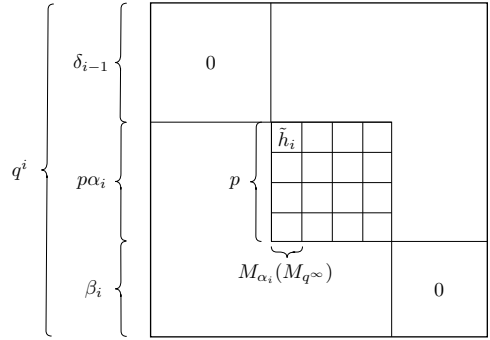
	 Let $h \in M_{q^\infty}$ be a Lebesgue contraction, for example 
		$$
		h \coloneqq  \sum_{i=1}^\infty \left (  \sum_{k=1}^{q-1} \frac k {q^i} 1_q^{\otimes (i-1)} \otimes  e_{kk}^{(q)} \otimes 1_q^{\otimes \infty} \right ) .
	$$	 
	  For $i \in \N$ and $r \in \{1,2,\cdots,\alpha_i\}$ we set 
		$$
			\tilde h_{i,r} \coloneqq h \left  (1-S_{i-1}-\frac {rp}{q^i},1-S_{i-1} - \frac{(r-1)p}{q^i}  \right  )  \in M_{q^\infty} 
		$$
		and 
	$$
		\tilde h_i :=  \diag ( \tilde h_{i,1}, \tilde h_{i,2}, \cdots, \tilde h_{i,\alpha_i} )  \in M_{\alpha_i}(M_{q^\infty}).
	$$
	By Lemma \ref{lem:lebesgue-contractions}, we know 
	$$
		\tau_{M_{q^\infty}}  ( f(\tilde h_{i,r})  ) =  \frac {q^i} p \int_{1-S_{i-1} - \frac{rp}{q^i}}^{1-S_{i-1} - \frac{(r-1)p}{q^i}}f(t) \ dt \qquad (f \in C_0(0,1], \ 1 \leq r \leq \alpha_i )
	$$
	and hence 
	\begin{equation}
		\label{eq:integral}
	(\tau_{M_{q^\infty}} \circ \T{Tr}_{\alpha_i})(f(\tilde h_i)) = \frac {q^i} p \int_{1-S_i}^{1-S_{i-1}} f(t) \ dt \qquad (f \in C_0(0,1]),
	\end{equation}
	where $\T{Tr}_{\alpha_i}$ denotes the sum over the diagonal in $M_{\alpha_i}(M_{q^\infty})$. Next, put $\tilde h_i$ into the upper left corner of $M_p( M_{\alpha_i} ( M_{q^\infty}))$,  which we denote by 
	$$
		e_{11}^{(p)} \otimes \tilde h_i  
	$$
	and set 
	$$
			\bar  h_i \coloneqq 0_{\delta_{i-1}} \oplus (e_{11}^{(p)} \otimes \tilde h_i) \oplus 0_{\beta_i} \in M_{q^i}(M_{q^\infty}) \cong M_{q^i} \otimes M_{q^\infty}.
	$$
	The element $\bar h_i$ is as depicted in Figure  \ref{fig:contractions}. Note that by (\ref{eq:positive-remainder}), the elements $\bar h_i$ live in $M_{q^i}(M_{q^\infty})$. Next, we consider the elements $\bar h_i$ in
	$$
		M_{q^\infty} \otimes M_{q^\infty} = \varinjlim \ ( M_{q^i} \otimes M_{q^\infty}, 1_q \otimes \id_{M_{q^i}} \otimes \id_{M_{q^\infty}}).
	$$ 
	By construction, the $\bar h_i$ are mutually orthogonal positive contractions which are summable  in  $M_{q^\infty} \otimes M_{q^\infty}$. We therefore may define 
	$$
		  \bar h \coloneqq \sum_{i=1}^\infty \bar  h_i \in   M_{q^\infty} \otimes M_{q^\infty}.
	$$
	 We can now check that $\bar h$ has $\frac 1 p$ Lebesgue spectral measure. Denote $T_N \coloneqq \sum_{i=1}^N \bar h_i$. For $f \in C_0(0,1]$ and $N \in \N$ we get:
	\begin{align*}
			\tau_{M_{q^\infty} \otimes M_{q^\infty}}(f(T_N))   & = \sum_{i=1}^N (\tau_{M_{q^i}} \otimes \tau_{M_{q^\infty}})(f(\bar  h_i)) =  \sum_{i=1}^N \frac 1 {q^i} (\tau_{M_{q^\infty}} \circ \T{Tr}_{\alpha_i}) ( f(\tilde h_i)) \\
			&   \overset{(\ref{eq:integral})}  =  \sum_{i=1}^N \frac 1 {q^i}  \frac {q^i} p \int_{1-S_i}^{1-S_{i-1}} f(t) \ dt  =    \frac 1 p   \int_{1-S_N}^1 f(t) \ dt .
	\end{align*}
	By (\ref{eq:sum}) the result follows, if we let $N$ tend to infinity. 
	\par In order to finish the proof, instead of $\bar  h_i$, we look, for $j = 2,3, \cdots , p$, at the elements
	$$
		\bar x_{j,i}  \coloneqq 0_{\delta_{i-1}} \oplus (e_{j1}^{(p)} \otimes (\tilde  h_i)^{\frac 1 2}) \oplus 0_{\beta_i} \in M_{q^i}(M_{q^\infty}) \cong M_{q^i} \otimes M_{q^\infty} 
	$$
	and define 
	$$
		x_j \coloneqq \sum_{i=1}^\infty \bar x_{j,i} \qquad (j =2,3,\cdots,p).
	$$
	The elements $\{x_j\}_{j=2}^p$ then satisfy $(\cR_p)$ and $x_2^*x_2 = \bar h$, which has $\frac 1 p$ Lebesgue spectral measure.
\end{proof}

The reader is invited to run this construction for $p = 2$ and $q =3$, in which case $\alpha_i = \beta_i = 1$, for all $i \in \N$.

\begin{lemma}
	\label{lem:standard-cpc-order-zero}
	Let $p,q \geq 2$ be natural numbers. Then, there exists a c.p.c. order zero map 
	$$
		\varphi \colon M_p \to M_{q^\infty} 
	$$
	such that $h_\varphi$ is a Lebesgue contraction.
\end{lemma}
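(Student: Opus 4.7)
The lemma should follow almost directly by assembling the two preceding results. Here is my plan.

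My first step is to apply Lemma \ref{lem:universal-elements} to produce elements $x_2,\dots,x_p\in M_{q^\infty}$ that satisfy the relations $(\mathcal R_p)$ and such that the positive contraction $h\coloneqq x_2^*x_2$ has $\frac1p$ Lebesgue spectral measure with respect to the unique trace $\tau_{M_{q^\infty}}$. Then, invoking Remark \ref{rem:cone-universal}, these elements determine a c.p.c.\ order zero map $\varphi\colon M_p\to M_{q^\infty}$ via $\varphi(e_{i1})=x_i$ for $i=2,\dots,p$. In particular, $\varphi(e_{11})=x_2^*x_2=h$, and for each $i\geq 2$ one has $\varphi(e_{ii})=x_ix_i^*$.

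The only thing that requires verification is that $h_\varphi$ is a Lebesgue contraction. Since $\varphi$ is unital on the matrix units in the sense that $\varphi(1_{M_p})=\sum_{i=1}^{p}\varphi(e_{ii})=h_\varphi$, and the summands are mutually orthogonal positive contractions (this follows from $e_{ii}e_{jj}=0$ and the order zero property), for any $f\in C_0(0,1]$ we have
$$
f(h_\varphi)=\sum_{i=1}^p f(\varphi(e_{ii})).
$$
I would then note, exactly as in the computation in the proof of Lemma \ref{lem:!-cpc-order-zero}, that the positive elements $\varphi(e_{ii})$ are pairwise equivalent (via the partial isometries $\pi_\varphi(e_{i1})$ in $M_{q^\infty}^{**}$, or concretely via the $x_i$'s), hence all have the same spectral distribution with respect to the unique trace $\tau_{M_{q^\infty}}$. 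In particular
$$
\tau_{M_{q^\infty}}\bigl(f(\varphi(e_{ii}))\bigr)=\tau_{M_{q^\infty}}\bigl(f(\varphi(e_{11}))\bigr)=\tau_{M_{q^\infty}}(f(h))=\tfrac1p\int_0^1 f(t)\,dt,
$$
for each $i=1,\dots,p$, where the last equality uses that $h$ has $\frac1p$ Lebesgue spectral measure. Summing and using orthogonality yields
$$
\tau_{M_{q^\infty}}(f(h_\varphi))=\sum_{i=1}^p\tau_{M_{q^\infty}}\bigl(f(\varphi(e_{ii}))\bigr)=\int_0^1 f(t)\,dt,
$$
which is exactly the defining property of a Lebesgue contraction.

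There is essentially no obstacle here: the whole construction has been absorbed into Lemma \ref{lem:universal-elements}, and Remark \ref{rem:cone-universal} is the standard dictionary between c.p.c.\ order zero maps out of $M_p$ and the universal relations $(\mathcal R_p)$. The only mild care needed is the orthogonality/equivalence bookkeeping for the diagonal compressions $\varphi(e_{ii})$, which is precisely the argument already used in the proof of Lemma \ref{lem:!-cpc-order-zero} and so can simply be cited.
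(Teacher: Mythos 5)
Your argument is correct and follows the paper's intended route: invoke Lemma \ref{lem:universal-elements} to get $x_2,\dots,x_p$ satisfying $(\mathcal R_p)$ with $x_2^*x_2$ of $\frac1p$ Lebesgue spectral measure, then pass to the associated c.p.c.\ order zero map via the universal property in Remark \ref{rem:cone-universal}. You have usefully spelled out the one step the paper glosses over as ``immediate,'' namely that $h_\varphi = \sum_{i=1}^p \varphi(e_{ii})$ is a Lebesgue contraction because the $\varphi(e_{ii}) = x_ix_i^*$ are mutually orthogonal and each is traced identically to $\varphi(e_{11}) = x_2^*x_2$, which is exactly the bookkeeping already carried out in the proof of Lemma \ref{lem:!-cpc-order-zero}. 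One small notational slip: the defining assignment should read $\overline\varphi(\iota^{1/2}\otimes e_{i1}) = x_i$ at the level of the cone, not $\varphi(e_{i1})=x_i$ (which does not quite parse since $e_{i1}$ is not positive); this does not affect the argument, as you correctly use only $\varphi(e_{11})=x_2^*x_2$ and $\varphi(e_{ii})=x_ix_i^*$ thereafter.
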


\begin{proof}
	This is now an immediate consequence of Lemma  \ref{lem:universal-elements} and the universal property of the cone $C_0(0,1] \otimes M_p$, as described in Remark \ref{rem:cone-universal}.
\end{proof}

\begin{lemma}
	\label{lem:commuting-cones}
	Let $p,q \geq 2$ be natural numbers. Then, for $r \in \{p,q\}$, there exist c.p.c. order zero maps $\alpha \colon M_p \to M_{r^\infty}$ and $\beta \colon M_q \to M_{r^\infty}$ such that 
	$$
		\alpha(1_p) + \beta(1_q) = 1, \quad [\alpha(M_p),\beta(M_q)] = \{0\}
	$$
	and such that $h_\alpha,h_\beta$ are Lebesgue contractions.
\end{lemma}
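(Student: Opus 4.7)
The plan is to bootstrap from Lemma \ref{lem:standard-cpc-order-zero} by placing two c.p.c.\ order zero maps into orthogonal tensor factors of $M_{r^\infty} \otimes M_{r^\infty} \cong M_{r^\infty}$, which forces commutation, and then using the simple $h \mapsto 1 - h$ substitution to make the supporting contractions add to $1$. Fix $r \in \{p, q\}$; since $p$ and $q$ play symmetric roles in Lemma \ref{lem:standard-cpc-order-zero}, I will treat only the case $r = q$, the case $r = p$ being obtained by swapping the letters throughout.

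Let $\alpha_0 \colon M_p \to M_{q^\infty}$ be the c.p.c.\ order zero map produced by Lemma \ref{lem:standard-cpc-order-zero}, so that $h_{\alpha_0}$ is a Lebesgue contraction, and fix a unital embedding $\sigma \colon M_q \hookrightarrow M_{q^\infty}$. Inside $M_{q^\infty} \otimes M_{q^\infty} \cong M_{q^\infty}$, I set
\[
	\alpha(x) \coloneqq \alpha_0(x) \otimes 1, \qquad \beta(y) \coloneqq (1 - h_{\alpha_0}) \otimes \sigma(y).
\]
Then $\alpha$ is c.p.c.\ order zero with supporting element $h_\alpha = h_{\alpha_0} \otimes 1$, and $\beta$ is c.p.c.\ order zero because it factors as the product of the positive contraction $(1 - h_{\alpha_0}) \otimes 1$ and the $\Star$-homomorphism $1 \otimes \sigma$, whose ranges commute; in particular $h_\beta = (1 - h_{\alpha_0}) \otimes 1$. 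The identity $h_\alpha + h_\beta = 1$ is immediate, and $\alpha(M_p)$ commutes with $\beta(M_q)$ because $\alpha_0(x)$ commutes with $h_{\alpha_0}$, which is a general feature of c.p.c.\ order zero maps.

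The only remaining point is that $h_\beta$ is again a Lebesgue contraction. Since the unique trace on $M_{q^\infty} \otimes M_{q^\infty}$ factorises as $\tau \otimes \tau$, I compute, for $f \in C_0(0,1]$,
\[
	\tau(f(h_\beta)) = \tau(f(1 - h_{\alpha_0})) = \int_0^1 f(1-t) \, dt = \int_0^1 f(s) \, ds,
\]
having first extended the defining identity for the Lebesgue contraction $h_{\alpha_0}$ from $C_0(0,1]$ to all of $C[0,1]$ by appealing to $\tau(1) = 1 = \int_0^1 1 \, dt$. I do not expect any real obstacle: all the substantive analytic and combinatorial work lives in Lemma \ref{lem:universal-elements}, and this statement amounts to a clean tensor-factor packaging of the single-map construction already in hand.
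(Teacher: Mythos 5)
Your proof is correct and takes essentially the same route as the paper: both start from the single order-zero map produced by Lemma \ref{lem:standard-cpc-order-zero}, place it in a tensor factor of $M_{q^\infty}$, and define $\beta$ using $1-h_\phi$ on the same factor times $M_q$ on the other; the paper simply uses $M_{q^\infty}\otimes M_q$ as codomain where you use $M_{q^\infty}\otimes M_{q^\infty}$ together with an embedding $\sigma$, a cosmetic difference. You also spell out the extension of the Lebesgue-contraction identity from $C_0(0,1]$ to $C[0,1]$ needed to handle $1-h_\phi$, which the paper leaves implicit as "clear"; this is a welcome bit of detail, not a deviation.
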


\begin{proof}
	Let us do the case $r = q$. By Lemma \ref{lem:standard-cpc-order-zero}, there exists a c.p.c. order zero map $ \phi \colon M_p \to M_{q^\infty}$ such that $h_\phi$ is a Lebesgue contraction. We then define c.p.c. order zero maps  
	 \[
	\begin{array}{ll}
		\alpha \colon   M_p \to M_{q^\infty} \otimes M_q : x \mapsto  \phi(x) \otimes 1_q, \\
		\beta \colon  M_q \to M_{q^\infty} \otimes M_q : y \mapsto (1-h_\phi) \otimes y.
	\end{array}
	\]
	Since $h_\phi$ commutes with the image of $\phi$, we see that the images of $\alpha$ and $\beta$ commute. Furthermore, by definition $\alpha(1_p) + \beta(1_q) = 1$ and it is clear that $h_\alpha$ and $h_\beta$ are Lebesgue contractions.
\end{proof}

\begin{cor}
	\label{cor:existence-finite}
	Let $p,q \geq 2$ be coprime natural numbers. Then, for $r \in \{p,q\}$, there exists a standard $\Star$-homomorphism 
	$$
		\varphi \colon Z_{p,q} \to M_{r^\infty}.
	$$
\end{cor}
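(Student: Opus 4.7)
The plan is to apply Lemma \ref{lem:commuting-cones} and then assemble $\varphi$ from the two commuting cones generating $Z_{p,q}$. Concretely, for the chosen $r\in\{p,q\}$, Lemma \ref{lem:commuting-cones} produces commuting c.p.c.\ order zero maps $\alpha\colon M_p\to M_{r^\infty}$ and $\beta\colon M_q\to M_{r^\infty}$ with $\alpha(1_p)+\beta(1_q)=1$ and with $h_\alpha,h_\beta$ both Lebesgue contractions. By \ref{def:cpc-order-zero} these correspond to $\Star$-homomorphisms $\bar\alpha\colon C_0(0,1]\otimes M_p\to M_{r^\infty}$ and $\bar\beta\colon C_0(0,1]\otimes M_q\to M_{r^\infty}$; pre-composing $\bar\alpha$ with the orientation-reversing isomorphism $C_0[0,1)\cong C_0(0,1]$ turns it into a $\Star$-homomorphism out of $C_0[0,1)\otimes M_p$.

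Using the presentation of $Z_{p,q}$ from \ref{def:dimension-drop} as being generated by the two commuting cones $\grave\imath,\acute\imath$ with unit equal to the sum of the two cone units, these two $\Star$-homomorphisms amalgamate into a unique unital $\Star$-homomorphism $\varphi\colon Z_{p,q}\to M_{r^\infty}$ such that $\varphi\circ\grave\imath$ and $\varphi\circ\acute\imath$ are the two maps above. Unitality of $\varphi$ comes from $\alpha(1_p)+\beta(1_q)=1$, and the commutation of the cone images required for the amalgamation is exactly what Lemma \ref{lem:commuting-cones} supplies.

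It remains to verify $\varphi$ is standard. Because $M_{r^\infty}$ has a unique trace $\tau_0$, this reduces to $\tau_0\circ\varphi=\tau_{\T{Leb}}$. Both are bounded tracial functionals, so it suffices to test equality on the dense $\Star$-subalgebra generated by the two cones; commutation of the cones reduces this further to comparing the two functionals on products of the form $\grave\imath(f\otimes x)\,\acute\imath(g\otimes y)$ with $f\in C_0[0,1)$, $g\in C_0(0,1]$, $x\in M_p$, $y\in M_q$. The fibrewise Lebesgue trace gives $\tau_{M_p}(x)\tau_{M_q}(y)\int_0^1 f(s)g(s)\,ds$. On the other side, using $h_\beta=1-h_\alpha$, the image of this product rewrites as $(\tilde f\tilde g)(h_\alpha)\pi_\alpha(x)\pi_\beta(y)$ with $\tilde f(t)=f(1-t)$, $\tilde g(t)=g(1-t)$; since $\pi_\alpha(M_p)$ and $\pi_\beta(M_q)$ commute and $h_\alpha$ has Lebesgue spectral measure, $\tau_0$ of this evaluates to the same number. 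The main bookkeeping obstacle will be aligning the flip $C_0[0,1)\leftrightarrow C_0(0,1]$ with the endpoint conditions of $Z_{p,q}$ (so that the left endpoint condition $f(0)\in M_p\otimes 1_q$ is paired with $\alpha$ rather than $\beta$) and confirming that the support projections of $h_\alpha$ and $h_\beta$ in $(M_{r^\infty})^{**}$ do not contribute spurious mass when integrating against the Lebesgue measure.
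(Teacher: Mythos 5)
Your plan matches the paper's: take the commuting cones from Lemma~\ref{lem:commuting-cones}, assemble them into a unital $\Star$-homomorphism out of $Z_{p,q}$, and then verify standardness. Two steps need more care.

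First, the assembly step. You write that the two $\Star$-homomorphisms ``amalgamate into a unique unital $\Star$-homomorphism $\varphi\colon Z_{p,q}\to M_{r^\infty}$'' because $Z_{p,q}$ is generated by the two commuting cones. Generation is not enough: you are implicitly invoking a \emph{universal property} of $Z_{p,q}$, namely that for coprime $p,q$ it is the universal unital $\cstar$-algebra carrying a pair of commuting c.p.c.\ order zero maps out of $M_p$ and $M_q$ whose cone units sum to $1$. That is a genuine fact, not a formal consequence of the generating description; in particular coprimality enters here. The paper imports precisely this via \cite[Proposition 2.5]{RW}, and you need the same citation (or an independent proof of the universal property) to make the amalgamation legitimate.

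Second, the standardness check. Your reduction of the trace comparison to products $\grave\imath(f\otimes x)\,\acute\imath(g\otimes y)$ is fine, but the final evaluation leans on ``commutation of $\pi_\alpha(M_p)$ and $\pi_\beta(M_q)$ $+$ Lebesgue spectral measure of $h_\alpha$ $\Rightarrow$ $\tau_0$ factorizes,'' and this implication does not hold in general: a trace of a product of commuting elements need not split as a product of traces. It does hold for the specific $\alpha,\beta$ of Lemma~\ref{lem:commuting-cones} because of the explicit tensor decomposition $M_{r^\infty}\cong M_{r^\infty}\otimes M_r$ used there (so $\tau_0$ literally is a tensor product trace), but you would have to unwind that construction rather than argue abstractly from commutation. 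A cleaner route, and the one the paper's terse final sentence is gesturing at, is to observe that $\varphi(\iota)=\acute\varphi(\iota\otimes 1_q)=h_\beta$ is a Lebesgue contraction, and that every trace on $Z_{p,q}$ is of the form $f\mapsto\int_0^1\tau_{pq}(f(s))\,d\nu(s)$ and hence is determined by its restriction to the center $C([0,1])$; thus $\tau_0\circ\varphi$ restricts to Lebesgue measure on the center and therefore equals $\tau_{\T{Leb}}$. That avoids the factorization issue entirely.
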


\begin{proof}
	 Let $r \in \{p,q\}$ and let $\alpha \colon M_p \to M_{r^\infty}$ and $\beta \colon M_q \to M_{r^\infty}$ be as in Lemma \ref{lem:commuting-cones}.  By \cite[Proposition 2.5]{RW} these maps induce a $\Star$-homomorphism $\varphi \colon Z_{p, q} \to M_{r^\infty}$, such that $\grave \varphi = (\overline \alpha)'$ and $\acute \varphi = \overline \beta$, where $(\cdot)'$ means that we have reversed the orientation of the unit interval. Since $\varphi$ is completely determined by $\acute \varphi$ and $\grave \varphi$, it follows that $\varphi$ is standard.
\end{proof}

\begin{convention}
	\label{def:dim-drop-limit}
	 Let $\mf p$ and  $\mf q$ be supernatural numbers. We then write $Z_{\mf p, \mf q}$ as an inductive limit as follows: Fix sequences $(P_n)_{n =1}^\infty , \ (Q_n)_{n =1}^\infty$ converging to $\mf p$ respectively $\mf q$ such that  $P_n$ divides $P_{n+1}$ and $Q_n$ divides  $Q_{n+1}$. This defines unital inclusions $M_{P_n} \subseteq M_{P_{n+1}}, \ M_{Q_n} \subseteq M_{Q_{n+1}}$ and $\gamma_{n,n+1} \colon Z_{P_n,Q_n} \to Z_{P_{n+1},Q_{n+1}}$ such that 
	$$
		Z_{\mf p, \mf q} = \varinjlim (Z_{P_n, Q_n},\gamma_{n,n+1}).
	$$
	Let us denote the limit maps by $\gamma_n \colon Z_{P_n,Q_n} \to Z_{\mf p, \mf q}$. Furthermore, we denote by 
	$$
		\kappa_n \colon Z_{\mf p, \mf q} \to Z_{P_n,Q_n}
	$$
	the induced conditional expectations.
\end{convention}

\begin{lemma}
	\label{lem:standard-supernatural}
	Le $\mf p$ and $\mf q$ be coprime supernatural numbers. Then, for $\mf r \in \{\mf p, \mf q\}$,  there exists a standard $\Star$-homomorphism
	$$
		\varphi \colon   Z_{\mf p, \mf q} \to M_{\mf r}.
	$$
\end{lemma}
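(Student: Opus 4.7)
The plan is a one-sided approximate intertwining built from Corollary \ref{cor:existence-finite} and the uniqueness statement Lemma \ref{lem:aue-dimension-drop}. Write $Z_{\mf p, \mf q} = \varinjlim (Z_{P_n, Q_n}, \gamma_{n,n+1})$ as in Convention \ref{def:dim-drop-limit}, and fix $\mf r = \mf p$ (the case $\mf r = \mf q$ is symmetric). For each $n$, Corollary \ref{cor:existence-finite} applied with $r = P_n$ yields a standard $\Star$-homomorphism $\eta_n \colon Z_{P_n, Q_n} \to M_{P_n^\infty}$. Composing with a unital embedding $M_{P_n^\infty} \hookrightarrow M_{\mf p}$ (available since $\mf p$ is of infinite type and so $P_n^\infty$ divides $\mf p$) gives a standard $\Star$-homomorphism $\tilde\varphi_n \colon Z_{P_n, Q_n} \to M_{\mf p}$.

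The sequence $(\tilde\varphi_n)$ need not be compatible with the $\gamma_{n,n+1}$, so I would straighten it inductively by approximate unitary equivalence. Choose an increasing sequence of finite sets $\cF_n \subseteq Z_{P_n, Q_n}$ with $\gamma_{n,n+1}(\cF_n) \subseteq \cF_{n+1}$ and such that $\bigcup_n \gamma_n(\cF_n)$ is dense in $Z_{\mf p, \mf q}$. Set $\varphi_1 := \tilde\varphi_1$; given a standard $\varphi_n \colon Z_{P_n, Q_n} \to M_{\mf p}$, the composition $\tilde\varphi_{n+1}\circ\gamma_{n,n+1}$ is standard as well, because $\gamma_{n,n+1}$ preserves the Lebesgue trace. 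Since $M_{\mf p}$ is a UHF-algebra of infinite type absorbing $M_{P_n}$, Lemma \ref{lem:aue-dimension-drop} applies to $\varphi_n$ and $\tilde\varphi_{n+1}\circ\gamma_{n,n+1}$, producing a unitary $u_n \in M_{\mf p}$ with
$$
\norm{u_n \tilde\varphi_{n+1}(\gamma_{n,n+1}(f)) u_n^* - \varphi_n(f)} < 2^{-n} \qquad (f \in \cF_n).
$$
Put $\varphi_{n+1} := \ad(u_n) \circ \tilde\varphi_{n+1}$, which is again standard since inner automorphisms preserve traces.

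By construction $\norm{\varphi_{n+1}\circ\gamma_{n,n+1}(f) - \varphi_n(f)} < 2^{-n}$ for $f \in \cF_n$, so for each fixed $m$ and $f \in \cF_m$ the sequence $\varphi_k(\gamma_{m,k}(f))$ is Cauchy in $M_{\mf p}$. The usual one-sided approximate-intertwining argument then defines a unital $\Star$-homomorphism $\varphi \colon Z_{\mf p, \mf q} \to M_{\mf p}$ by $\varphi(\gamma_m(f)) := \lim_{k\to\infty}\varphi_k(\gamma_{m,k}(f))$ on the dense set $\bigcup_m \gamma_m(\cF_m)$, extended by continuity. Standardness of $\varphi$ follows from continuity of tracial states: for $\tau \in T(M_{\mf p})$ and $f \in \cF_m$,
$$
\tau(\varphi(\gamma_m(f))) \;=\; \lim_{k\to\infty}\tau(\varphi_k(\gamma_{m,k}(f))) \;=\; \lim_{k\to\infty}\tau_{\T{Leb}}(\gamma_{m,k}(f)) \;=\; \tau_{\T{Leb}}(\gamma_m(f)),
$$
and this forces $\tau\circ\varphi = \tau_{\T{Leb}}$ by density.

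The main obstacle is the bookkeeping for the approximate intertwining: one must choose the $\cF_n$ so that finite subsets of $Z_{\mf p, \mf q}$ are approximated at a fast enough rate, and verify that the resulting limit map is well-defined and still inherits the Lebesgue trace condition. The infinite-type hypothesis on $\mf p$ is used twice — first to supply the unital embedding $M_{P_n^\infty} \hookrightarrow M_{\mf p}$, and second to invoke Lemma \ref{lem:aue-dimension-drop} in the adjustment step.
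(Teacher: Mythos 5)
Your proposal is correct and follows essentially the same route as the paper: write $Z_{\mf p, \mf q}$ as an inductive limit of $Z_{P_n,Q_n}$, get standard maps into $M_\mf r$ from Corollary~\ref{cor:existence-finite} (composed with a unital embedding of $M_{P_n^\infty}$ or $M_{Q_n^\infty}$ into $M_\mf r$), straighten them by unitaries using Lemma~\ref{lem:aue-dimension-drop}, and pass to the one-sided approximate intertwining limit. You make explicit the infinite-type hypothesis on $\mf r$ needed for the embedding $M_{P_n^\infty}\hookrightarrow M_\mf r$, which the paper's statement leaves implicit, and you spell out the standardness check at the end; otherwise the two arguments coincide.
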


\begin{proof}
	 Let $(\cF_n)_{n =1}^\infty$ be an increasing sequence of finite subsets of $Z_{\mf p, \mf q}$ with dense union, such that  $\cF_n \subseteq Z_{P_n,Q_n}$ and $\gamma_{n,n+1}(\cF_n) \subseteq \cF_{n+1}$. Furthermore, let $\e_n > 0$ with $\sum \e_n < \infty$. By Corollary \ref{cor:existence-finite}, for each $n \in \N$, there exists a standard $\Star$-homomorphism $\varphi_n \colon Z_{P_n,Q_n} \to M_\mf r$. Note that $\varphi_{n+1} \circ \gamma_{n,n+1}$ is also standard. Hence, by Lemma \ref{lem:aue-dimension-drop}, there exists a unitary $u_{n+1}\in M_\mf r$ such that 
	$$
		\norm{\varphi_n(x) - u_{n+1} \varphi_{n+1}( \gamma_{n,n+1}(x))u_{n+1}^*} < \e_n \qquad (x \in \cF_n).
	$$
	Define $\tilde \varphi_1 \coloneqq  \varphi_1$ and for $n \geq 2$ we let 
	$$
		\tilde \varphi_n \colon Z_{P_n,Q_n} \to M_\mf r : x \mapsto u_2u_3 \cdots u_n \varphi_n(x) u_n^* \cdots u_3^* u_2^*.
	$$
	One then checks that for each $x \in Z_{P_n,Q_n}$ the sequence $(\tilde \varphi_k(\gamma_{n,k}(x)))_{k =n}^\infty$ is Cauchy and we may define 
	$$
		\varphi (x) \coloneqq \lim_{k \to \infty} \tilde \varphi_k(\gamma_{n,k}(x)) \qquad (x \in Z_{P_n,Q_n}).
	$$
	This extends to a $\Star$-homomorphism $\varphi \colon Z_{\mf p, \mf q} \to M_\mf r$, which is easily seen to be standard.
\end{proof}

\begin{defn}
	\label{defn:unitarily-suspeded}
	Let $\mf p$ and $\mf q$ be supernatural numbers of infinite type.  We say that a  $\Star$-homomorphism $\mu \colon Z_{\mf p, \mf q} \to Z_{\mf p, \mf q}$ is \textit{unitarily suspended}, if 
		\[
			\mu_t = 
			\left \{
				\begin{array}{ll}
				 \T{ad}(u_t) \circ (\alpha_0 \otimes 1_\mf q)  & \text{ if } t \in [0,1) ,  \\
		 1_\mf p \otimes \alpha_1 & \text{ if } t = 1,
				\end{array}
				\right .
		\]
	where $\alpha_0 \colon Z_{\mf p, \mf q} \to M_\mf p$ and $\alpha_1 \colon Z_{\mf p, \mf q} \to M_\mf q$ are unital $\Star$-homomorphisms and $(u_t)_{t \in [0,1)}$ is a unitary path in $M_\mf p \otimes M_\mf q$ starting at the identity.
\end{defn}

\begin{theorem}
	\label{th:existence-unitarily-suspended}
	Let $\mf p$ and $\mf q$ be coprime supernatural numbers of infinite type. Then, there exists a unitarily suspended  standard $\Star$-homomorphism $\mu \colon Z_{\mf p, \mf q} \to Z_{\mf p, \mf q}$.
\end{theorem}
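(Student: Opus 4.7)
The plan is to assemble $\mu$ directly from two ingredients already proved: the existence of standard maps out of $Z_{\mf p, \mf q}$ into each of the UHF-algebras $M_\mf p$ and $M_\mf q$, and the strong asymptotic uniqueness for standard maps into a common UHF absorber.

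First I would apply Lemma \ref{lem:standard-supernatural} to produce standard unital $\Star$-homomorphisms $\alpha_0 \colon Z_{\mf p, \mf q} \to M_\mf p$ and $\alpha_1 \colon Z_{\mf p, \mf q} \to M_\mf q$. Amplifying yields two unital $\Star$-homomorphisms
\[
\beta_0 \coloneqq \alpha_0 \otimes 1_\mf q, \qquad \beta_1 \coloneqq 1_\mf p \otimes \alpha_1
\]
from $Z_{\mf p, \mf q}$ into $M_\mf p \otimes M_\mf q$. Both are standard, since the unique tracial state of $M_\mf p \otimes M_\mf q$ is $\tau_{M_\mf p} \otimes \tau_{M_\mf q}$ and this collapses against either $\beta_i$ to $\tau_{\T{Leb}}$ by standardness of $\alpha_i$. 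Because $\mf p$ and $\mf q$ are coprime supernatural numbers of infinite type, $U \coloneqq M_\mf p \otimes M_\mf q$ is itself a UHF-algebra of infinite type which tensorially absorbs both $M_\mf p$ and $M_\mf q$ (using $\mf p^2 = \mf p$ and $\mf q^2 = \mf q$). Theorem \ref{th:uniqueness} therefore gives a unitary path $(u_t)_{t \in [0,1)}$ in $U$ with $u_0 = 1_U$ such that
\[
\lim_{t \to 1} \norm{u_t \beta_0(f) u_t^* - \beta_1(f)} = 0 \qquad (f \in Z_{\mf p, \mf q}).
\]

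Next I would glue these fibers into a single map by setting $\mu(f)(t) \coloneqq u_t \beta_0(f) u_t^*$ for $t \in [0,1)$ and $\mu(f)(1) \coloneqq \beta_1(f)$. Continuity on $[0,1)$ is clear from continuity of the path, and continuity at $t = 1$ is exactly the content of the asymptotic equivalence above. At the endpoints, $\mu(f)(0) = \beta_0(f) \in M_\mf p \otimes 1_\mf q$ and $\mu(f)(1) = \beta_1(f) \in 1_\mf p \otimes M_\mf q$, so $\mu(f) \in Z_{\mf p, \mf q}$. Fiberwise $\mu$ is a composition of a $\Star$-homomorphism with an inner automorphism of $M_\mf p \otimes M_\mf q$, and hence $\mu$ is itself a $\Star$-homomorphism. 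By construction, $\mu$ satisfies Definition \ref{defn:unitarily-suspeded} with $(u_t)_{t \in [0,1)}$ as the unitary path. Finally, standardness of $\mu$ follows because any trace on $Z_{\mf p, \mf q}$ has the form $\tau_\nu(g) = \int_0^1 \tau_{M_\mf p \otimes M_\mf q}(g(t)) \, d\nu(t)$, and at every $t \in [0,1]$ the fiber $\mu_t$ is (conjugate to) either $\beta_0$ or $\beta_1$, both of which are trace-collapsing to $\tau_{\T{Leb}}$; hence $\tau_\nu \circ \mu = \tau_{\T{Leb}}$ for every probability measure $\nu$. In summary, the theorem is essentially a direct assembly of Lemma \ref{lem:standard-supernatural} and Theorem \ref{th:uniqueness}, and I expect no real obstacle beyond the routine check that $\mu(f)$ is genuinely continuous at $t = 1$, which is precisely what strong asymptotic unitary equivalence delivers.
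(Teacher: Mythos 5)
Your proposal is correct and follows exactly the same route as the paper: obtain standard maps $\alpha_0 \colon Z_{\mf p,\mf q} \to M_\mf p$ and $\alpha_1 \colon Z_{\mf p,\mf q} \to M_\mf q$ via Lemma \ref{lem:standard-supernatural}, amplify to $\alpha_0\otimes 1_\mf q$ and $1_\mf p\otimes\alpha_1$ into the UHF-algebra $M_\mf p\otimes M_\mf q$, apply Theorem \ref{th:uniqueness} to get a strong asymptotic unitary path, and glue fiberwise. Your additional checks (that $M_\mf p\otimes M_\mf q$ is UHF of infinite type absorbing both factors, and the verification of standardness of $\mu$ via the fiberwise trace decomposition) are details the paper leaves implicit but are the right ones.
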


\begin{proof}
	By Lemma \ref{lem:standard-supernatural}, there exist standard $\Star$-homomorphisms $\alpha_0 \colon Z_{\mf p, \mf q} \to M_\mf p$ and $\alpha_1 \colon Z_{\mf p, \mf q} \to M_\mf q$. By Theorem \ref{th:uniqueness}, we have $\alpha_0 \otimes 1_\mf q \sim_{\T{asu}} 1_\mf p \otimes \alpha_1$, witnessed  by a unitary path $(u_t)_{t \in [0,1)}$ starting at the identity. Define a $\Star$-homomorphism $\mu \colon Z_{\mf p, \mf q} \to Z_{\mf p, \mf q}$ by $\mu_t \coloneqq \T{ad}(u_t) \circ (\alpha_0 \otimes 1_\mf p)$ for $t \in [0,1)$ and $\mu_1 \coloneqq 1_\mf p \otimes \alpha_1$. Then $\mu$ is standard and unitarily suspended in the sense of Definition \ref{defn:unitarily-suspeded}.
\end{proof}

\section{Strongly self-absorbing models}

In this section we show that the stationary inductive limit associated to a unitarily suspended $\Star$-homomorphism is strongly self-absorbing (Theorem \ref{th:SSA}). We end with some general observations, for example that these limits are all isomorphic and define the initial object in the category of strongly self-absorbing $\cstar$-algebras.\\

\begin{notation}
	\label{notation:factor-embeddings}
	Let $n \in \N$ and $\mf p_1, \mf p_2, \cdots, \mf p_n$ be supernatural numbers. Let us define
	$$
		M_{(\mf p_1,\cdots,\mf p_n)} \coloneqq M_{\mf p_1} \otimes M_{\mf p_2} \otimes \cdots \otimes M_{\mf p_n}.
	$$
	If  $i_1,i_2,\cdots,i_k$ are $k$ distinct integers, not necessarily in increasing order,  with $i_k \in \{1,2,\cdots,n\}$, then there is an obvious inclusion 
	$$
		\imath_{(\mf p_1, \cdots, \mf p_n)}^{[i_1,i_2,\cdots,i_k]} \colon M_{(\mf p_{i_1}, \mf p_{i_2}, \cdots, \mf p_{i_k})} \to M_{(\mf p_1,\cdots,\mf p_n)}.
	$$
	Let us furthermore denote 
	$$
		M_{(\mf p_1,\cdots,\mf p_n)}^{[i_1,i_2,\cdots,i_k]} \coloneqq \T{Im} \big (\imath_{(\mf p_1,\cdots,\mf p_n)}^{[i_1,i_2,\cdots,i_k]} \big ).
	$$
	If the $n$-tupel $(\mf p_1, \cdots, \mf p_n)$ in question is clear, we will omit the subscripts and simply write $\imath^{[i_1,\cdots,i_k]}$ and $M^{[i_1,\cdots,i_k]}$.
\end{notation}	

\begin{example}
	The map $\imath_{(\mf p, \mf q)}^{[1]} \colon M_{\mf p} \to M_\mf p \otimes M_\mf q$ for example is the first factor embedding, whereas $\imath_{(\mf p, \mf q)}^{[2]} \colon M_\mf q \to M_\mf p \otimes M_\mf q$ denotes the second factor embedding. Another example is the flip map $M_\mf p \otimes M_\mf p  \to M_\mf p \otimes M_\mf p$,  which is $\imath_{(\mf p, \mf p)}^{[2,1]}$ in our notation.
\end{example}	

\begin{defn}
	\label{def:dim-drop-gen}
	Let $\mf p_1,\cdots, \mf p_n$ and $\mf q_1, \cdots, \mf q_m$ be supernatural numbers.  Let $\mf r \coloneqq (\mf p_1, \mf p_2, \cdots, \mf p_n)$ and $\mf s \coloneqq (\mf q_1, \mf q_2, \cdots, \mf q_m)$. We then define 
	$$
		Z_{\mf r , \mf s} = \left  \{f \in C([0,1], M_{\mf r} \otimes M_{\mf s}) :
			\begin{array}{l}
				f(0) \in M_{\mf r} \otimes 1_\mf s, \\
				f(1) \in 1_\mf r \otimes M_{\mf s}				
			\end{array}
		\right \} .
	$$
\end{defn}

\begin{convention}
	\label{convention:dimension-drop}
	Let $\mf p$ and $\mf q$ be supernatural numbers. To simplify notation a bit, let us define $\mf p^2 \coloneqq (\mf p, \mf p), \ \mf q^2 \coloneqq (\mf q, \mf q)$ and  $\mf r \coloneqq (\mf p^2, \mf q^2, \mf p^2, \mf q^2)$. We then identify $Z_{\mf p^2,\mf q^2} \otimes Z_{\mf p^2,\mf q^2}$ with a $\cstar$-subalgebra of $C([0,1]^2, M_\mf r)$,  where $f \otimes g \in Z_{\mf p^2,\mf q^2} \otimes Z_{\mf p^2,\mf q^2}$ gets identified with the function $(f \otimes g)(s,t) \coloneqq f(s) \otimes g(t)$. Under this identification, one can check that an element $h \in Z_{\mf p^2,\mf q^2} \otimes Z_{\mf p^2,\mf q^2}$  must satisfy:
	\begin{enumerate}[label=$\bullet$]
		\item 	$h(0,0) \in M_\mf r^{[1,2,5,6]}$,
		\item $h(s,0) \in M_\mf r^{[1,2,3,4,5,6]}$,  for $s \in (0,1)$  and 
		\item $h(1,0) \in M_\mf r^{[3,4,5,6]}$.
	\end{enumerate}
	Similar boundary conditions must be fulfilled on the remaining sides of the square.
\end{convention}

\begin{notation}
	\label{notation:mu}
	Let $\mu \colon Z_{\mf p, \mf q} \to Z_{\mf p, \mf q}$ be a unital $\Star$-homomorphism. For $(i,j) \in \{(1,3),(2,4)\}$ we define 
	\[
		\mu^{[i,j]} \colon   Z_{\mf p, \mf q}  \to Z_{\mf p^2, \mf q^2};    \quad  \mu^{[i,j]}_t  \coloneqq  \imath_{(\mf p^2, \mf q^2)}^{[i,j]} \circ \mu_t.
	\]
	Furthermore, we define the following $\Star$-homomorphism:
	\[
		\dbtilde \mu  \colon  Z_{\mf p, \mf q} \otimes Z_{\mf p, \mf q} \to Z_{\mf p^2, \mf q^2};  \quad  \dbtilde \mu_s  (f \otimes g) \coloneqq \mu^{[1,3]}_s(f) \mu^{[2,4]}_s(g) .
	\]
	Note that $\mu_s^{[1,3]}$ and $\mu_s^{[2,4]}$ are commuting $\Star$-homomorphisms.
\end{notation}

\begin{prop}
	\label{prop:intertwining}
	Let $\mf p$ and $\mf q$ be coprime supernatural numbers of infinite type and assume $\mu \colon Z_{\mf p, \mf q} \to Z_{\mf p, \mf q}$ is unitarily suspended. Then 
	$$
	 (\id_{Z_{\mf p^2,\mf q^2}} \otimes 1_{Z_{\mf p^2,\mf q^2}}) \circ \dbtilde \mu \approx_{\T u}  \mu^{[1,3]} \otimes \mu^{[1,3]}.
	$$
	That is, we can make the following diagram commute arbitrary well up to conjugating by a unitary.
	\begin{equation}
		\label{fig:intertwining}
				\begin{aligned}
		\xymatrix{
		Z_{\mf p, \mf q} \otimes Z_{\mf p, \mf q} \ar[rr]^{\mu^{[1,3]}  \otimes \mu^{[1,3]}}  \ar[dr]_{\dbtilde \mu} & & Z_{\mf p^2, \mf q^2} \otimes Z_{\mf p^2, \mf q^2} \\
		& Z_{\mf p^2, \mf q^2}  \ar[ur]_{\id \otimes 1} & 
		}
		\end{aligned}
	\end{equation}
\end{prop}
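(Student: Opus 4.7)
First, I would unfold both maps explicitly. View $Z_{\mf p^2, \mf q^2} \otimes Z_{\mf p^2, \mf q^2}$ as a subalgebra of $C([0,1]^2, M_{\mf r})$ with $\mf r = (\mf p, \mf p, \mf q, \mf q, \mf p, \mf p, \mf q, \mf q)$ and the boundary conditions of Convention \ref{convention:dimension-drop}. A direct calculation on elementary tensors yields, for $(s,t) \in [0,1]^2$,
\begin{align*}
\big((\id \otimes 1) \circ \dbtilde\mu\big)(f \otimes g)(s,t) &= \imath^{[1,3]}(\mu_s(f)) \cdot \imath^{[2,4]}(\mu_s(g)),\\
\big(\mu^{[1,3]} \otimes \mu^{[1,3]}\big)(f \otimes g)(s,t) &= \imath^{[1,3]}(\mu_s(f)) \cdot \imath^{[5,7]}(\mu_t(g)),
\end{align*}
where the superscripts denote the obvious embeddings into $M_{\mf r}$. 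Both sides share the $f$-factor at positions $(1,3)$, so the task reduces to producing, for any finite $\cF \subseteq Z_{\mf p, \mf q} \otimes Z_{\mf p, \mf q}$ and $\e > 0$, a unitary $V \in Z_{\mf p^2, \mf q^2} \otimes Z_{\mf p^2, \mf q^2}$ that moves the $g$-factor from positions $(2,4)$ with parameter $s$ to positions $(5,7)$ with parameter $t$, while approximately commuting with the $f$-factor.

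The construction of $V$ rests on two pillars. The unitary suspension $\mu_s = \ad(u_s) \circ (\alpha_0 \otimes 1_\mf q)$ for $s \in [0,1)$ and $\mu_1 = 1_\mf p \otimes \alpha_1$ pulls the $s$-dependence of $\mu_s(g)$ into the path $u_s$: after conjugating by $\imath^{[2,4]}(u_s^*)$ on the source side and $\imath^{[5,7]}(u_t)$ on the target side, it suffices to relocate the fixed element $\alpha_0(g)$ from position $2$ to position $5$ (and, near $s=1$ or $t=1$, $\alpha_1(g)$ from position $4$ to position $7$). Since $\mf p$ and $\mf q$ are of infinite type, the flips on $M_\mf p \otimes M_\mf p$ and on $M_\mf q \otimes M_\mf q$ are strongly asymptotically inner by \cite[Theorem 2.2]{DW}, providing flip-implementing unitaries at positions $(2,5)$ and $(4,7)$ inside $M_{\mf r}$.

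I would assemble $V$ as a two-parameter analogue of the rotation unitaries $\grave W_t, \acute W_t$ of Note \ref{rotation-unitaries}: a product of the flip unitaries on $M_\mf p \otimes M_\mf p$ and $M_\mf q \otimes M_\mf q$, together with the compensating twists by $\imath^{[2,4]}(u_s)$ and $\imath^{[5,7]}(u_t)$, each damped by cutoff functions of $(s,t)$ modeled on $\xi_t$ so that $V$ lies in the correct boundary subalgebra of $M_{\mf r}$ on every face of $[0,1]^2$. Concretely, the flip on $M_\mf p \otimes M_\mf p$ at positions $(2,5)$ must degenerate to $1$ on $\{s=1\}\cup\{t=1\}$ (where position $2$ or $5$ is forbidden), and the flip on $M_\mf q \otimes M_\mf q$ at $(4,7)$ must degenerate to $1$ on $\{s=0\}\cup\{t=0\}$; these are exactly the regions where the corresponding $g$-factor already vanishes. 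Because every component of $V$ lives at positions disjoint from $(1,3)$, the approximate commutation with the $f$-factor is automatic.

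The main obstacle is the combinatorial bookkeeping at the four faces, four edges and four corners of $[0,1]^2$ — the paths $u_s$, $u_t$ are defined only on $[0,1)$, the replacement by $1_\mf p \otimes \alpha_1$ at the endpoint must be absorbed into the construction, and each of the flips has to be suppressed wherever its source or target position is disallowed — which collectively have to be checked against the boundary data of $Z_{\mf p^2, \mf q^2} \otimes Z_{\mf p^2, \mf q^2}$. Once this is arranged, the same kind of telescoping estimate that underlies \eqref{eq:twist}, combined with a density argument in $Z_{\mf p, \mf q} \otimes Z_{\mf p, \mf q}$, provides the uniform bound $\norm{V\,\Phi(x)\,V^* - \Psi(x)} < \e$ on the finite set $\cF$, establishing $\approx_{\T u}$.
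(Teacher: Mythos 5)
Your reduction to finite $\cF$ and $\e$, the identification of the codomain with functions on the square, and the choice of "interior" unitary $U_{s,t} = \widetilde S\, \imath^{[2,4]}(u_t u_s^*)$ (in your notation, $\imath^{[5,7]}(u_t)\cdot[\text{flip}]\cdot\imath^{[2,4]}(u_s^*)$, which is the same thing after commuting the flip past the twist) all agree with the paper. The gap is in how you handle the boundary.

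Your claim that the $\mf p$-flip at positions $(2,5)$ should \emph{degenerate to the identity} on $\{s=1\}\cup\{t=1\}$ — because ``the corresponding $g$-factor already vanishes there'' — is not correct, and it breaks the construction at the corners. Consider $(s,t)=(1,0)$. On the source side, $\dbtilde\mu(f\otimes g)(1,0)=\imath^{[3]}(\alpha_1(f))\,\imath^{[4]}(\alpha_1(g))$; on the target side, $(\mu^{[1,3]}\otimes\mu^{[1,3]})(f\otimes g)(1,0)=\imath^{[3]}(\alpha_1(f))\,\imath^{[5]}(\alpha_0(g))$. The compensating twist $\imath^{[2,4]}(u_{1^-}^*)$ (approximately) carries $\imath^{[4]}(\alpha_1(g))$ to $\imath^{[2]}(\alpha_0(g))$, which is still in position~$2$; one \emph{still} needs the $\mf p$-flip to land it in position~$5$. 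If both flips have been turned off, $V(1,0)$ is just the twist $\imath^{[2,4]}(u_{1^-}^*)$, which neither satisfies the boundary condition ($V(1,0)$ must lie in $M_\mf r^{[3,4,5,6]}$, but position $2$ appears) nor produces the correct conjugation. The $\mf p$-part of the source indeed vanishes at $s=1$, but the $\mf p$-part of the \emph{target} does not at $t=0$, so the flip cannot simply be suppressed.

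What actually works — and what the paper does — is to keep the flip active but absorb the offending twist into allowed coordinates via the algebraic identity $\ad(\widetilde S_p)\circ \imath^{[2,4]}=\imath^{[5,4]}$. Concretely the paper introduces a short auxiliary path $(\nu_t)_{t\in[0,1]}$, with $\nu_0=\widetilde S_q$, $\nu\equiv 1$ on $[\delta,1-\delta]$, and $\nu_1=\widetilde S_p$, and sets $W_{s,t}=\nu_t U_{s,t}\nu_s$ away from corners (and $\nu_{\min(s,t)}U$, $\nu_{\max(s,t)}U$ in the two corner squares). At $t=0$ one then gets $W_{s,0}=\widetilde S_p\,\bar u_s^*\,\nu_s$, which lies in $M_r^{[2,4,5]}$ for $s<1$ and, because the right-hand $\nu_1=\widetilde S_p$ conjugates the twist to position $4\text{-}5$, in $M_r^{[4,5]}$ at $s=1$. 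There is no moment at which the $\mf p$-flip ``turns off''. A second, smaller difference: you propose to use the asymptotic flip paths $S_\mf p(\cdot), S_\mf q(\cdot)$ from \cite[Theorem 2.2]{DW} with $\xi_t$-type cutoffs; the paper instead first pushes everything into a finite matrix block $M_{P_n}\otimes M_{Q_n}$ (choosing $n$ so the conditional expectation $\kappa_n$ is $\e$-close and so that $v_t$ can be approximated by a path $\bar v_t$ there that is constant near the endpoints), and then uses the \emph{exact} finite-dimensional flips $S_p,S_q$. This removes an entire layer of approximation error and is what makes the identities (a)--(e) in the paper exact rather than approximate. I would suggest adopting that reduction as well, since with asymptotic flips all the boundary identities only hold approximately and the corner bookkeeping becomes considerably worse.
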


\begin{proof}
		\textbf{(I)} Let us first do some setup. Recall from Definition \ref{defn:unitarily-suspeded} that  $\mu \colon Z_{\mf p, \mf q} \to Z_{\mf p, \mf q}$ is given by  
	\begin{equation}
		\label{eq:mu}
		\mu_t = \begin{cases}
			\T{ad}(v_t) \circ (\alpha_0 \otimes 1_{\mf q}) & \text{ if } t \in [0,1), \\
			1_{\mf p} \otimes \alpha_1 & \text{ if } t =1,
		\end{cases}
	\end{equation}
	where $(v_t)_{t \in [0,1)}$ is a unitary path in $M_{\mf p} \otimes M_{\mf q}$ starting at the identity and where $\alpha_0 \colon Z_{\mf p, \mf q} \to M_\mf p$ and $\alpha_1\colon Z_{\mf p, \mf q} \to M_\mf q$ are unital $\Star$-homomorphisms. During the proof we identify the codomain of $\mu^{[1,3]} \otimes \mu^{[1,3]}$ with functions on the square, accordingly to Convention \ref{convention:dimension-drop}. Under this identification, the map $\mu^{[1,3]} \otimes \mu^{[1,3]}$ becomes:
	\begin{equation}	
		\label{eq:identification}
		  (\mu^{[1,3]} \otimes \mu^{[1,3]})(f \otimes g)(s,t) = \imath^{[1,3]}_{\mf r }(\mu_s(f)) \cdot \imath^{[5,7]}_{\mf r }(\mu_t(g)) ,
	\end{equation}
	where $\mf r  = (\mf p^2,\mf q^2, \mf p^2, \mf q^2)$ is as in  Convention \ref{convention:dimension-drop}. Finally, we will need to write $Z_{\mf p, \mf q}$ as an inductive limit $\varinjlim (Z_{P_n,Q_n}, \gamma_{n,n+1})$, as explained in Convention \ref{def:dim-drop-limit}.
	\par \noindent \textbf{(II)}  Let us fix  a normalized and finite set $\cF \subseteq Z_{\mf p, \mf q}$ and let $\e > 0$. The goal is to find a unitary $W \in Z_{\mf p^2, \mf q^2} \otimes Z_{\mf p^2, \mf q^2}$ making the diagram  \eqref{fig:intertwining} commute up to $\e$ on  $\cF \otimes \cF \coloneqq \{f \otimes g : f,g \in \cF\}$. For convenience we may assume that the unitary path $(v_t)_{t \in [0,1)}$ is  constant in a small open neighborhood of zero, i.e.~we may assume there exists $0<\eta < 1$ such that $v_t = 1_{\mf p \mf q}$, for $t \in [0,\eta]$. Let us  fix some $0<\delta < \eta  $, such that 
		\begin{equation}
			\label{eq:uniform-continuity}
			\abs{s-s'} \leq \delta \quad \Rightarrow \quad  \norm{\mu_s(f) - \mu_{s'}(f)} \leq \frac \e {10} \qquad (f \in \cF)
		\end{equation}
		and let $n \in \N$ such that
		\begin{equation}
			\label{eq:cond-exp}
			\norm{\kappa_n(\mu(f)) - \mu (f)} \leq \frac \e {10} \qquad (f \in \cF),
		\end{equation}
		where $\kappa_n$ is the conditional expectation as defined in Convention \ref{def:dim-drop-limit}. By possibly enlarging $n$ we can ensure the existence\footnote{This is done by approximating the unitary  $(v_t)_{t \in [0,1-\delta]} \in C([0,1-\delta],M_\mf p \otimes M_\mf q)$ by a unitary in $C([0,1-\delta],M_{P_n} \otimes M_{Q_n})$. } of a unitary path $(\bar v_t)_{t \in [0,1]}$ such that 
		\begin{equation}
			\label{eq:unitary-path-approx}
			\left [
				\begin{array}{ll}
					\bar v_t = 1_{P_n} \otimes 1_{Q_n} & \text{ if } t \in [0,\delta], \\
					\norm{\bar v_t - v_t} \leq \frac \e {10} & \text{ if } t \in [\delta,1-\delta], \\
					\bar v_t = \bar v_{1-\delta} & \text{ if } t \in [1-\delta,1].
				\end{array}
			\right .
		\end{equation}
		Let us denote  $p \coloneqq P_n,\ q \coloneqq Q_n$ and $r \coloneqq (p,p,q,q,p,p,q,q)$.  We then identify $M_r \subseteq M_\mf r$ as unital subalgebra, via our fixed inclusions\footnote{Remember that $M_r$ and $M_\mf r$ are a tensor product of eight matrix respectively UHF-algebras, see Notation \ref{notation:factor-embeddings}.}. Next, let
		\[
			 S_p \in M_p \otimes M_p \quad \text{ and } \quad   S_q \in M_q \otimes M_q
		\]
		denote the self-adjoint flip unitaries. This means $S_p(x \otimes y)S_p^* = y \otimes x$, for all $x,y \in M_p$ and similarly for $M_q$. Then define 
		\begin{equation}
			\label{eq:flip-unitaries}
			\left [
				\begin{array}{l}
					\widetilde S_p \coloneqq\imath_r^{[2,5]}( S_p), \\
					\widetilde S_q \coloneqq \imath_r^{[4,7]}( S_q), \\
					\widetilde S \coloneqq \widetilde S_p \widetilde S_q = \widetilde S_q \widetilde S_p \in M_r^{[2,4,5,7]}
				\end{array}
			\right .
		\end{equation} and for $t \in [0,1]$ we let 
		\begin{equation}
			\label{eq:unitary-path}
				\begin{array}{l}
				u_t \coloneqq \imath_\mf r^{[2,4]}(v_{\min(t,1-\delta)}), \\ 
				 \bar u_t \coloneqq \imath_r^{[2,4]}(\bar v_t).
				 \end{array}
		\end{equation}
		By \eqref{eq:unitary-path-approx} we see that  $\norm{u_t - \bar u_t} \leq \frac \e {10}$, for all $t \in [0,1]$. We are now ready to define a first approximation to the desired unitary $W$:
		\begin{equation}
			\label{eq:U}
			U \colon [0,1]^2 \to M_r : (s,t) \mapsto \widetilde S \bar u_t \bar u_s^*.
		\end{equation}
		Note that $U$ does not yet define an element of $Z_{\mf p^2, \mf q^2} \otimes Z_{\mf p^2, \mf q^2}$. However,  let   us show that the unitary $U$ makes diagram (\ref{fig:intertwining}) commute approximately, i.e. 
		\begin{equation}
			\label{eq:U-aue}
			U (\dbtilde \mu (f \otimes g) \otimes 1_{Z_{\mf p^2, \mf q^2}})U^* \approx_{\frac \e 2} \mu^{[1,3]}(f) \otimes  \mu^{[1,3]}(g)  \qquad (f,g \in \cF).
		\end{equation}
		First, observe the following identities:
		\begin{enumerate}[label=(\alph*)]
			\item $\ad(\widetilde S_p) \circ  \imath_r^{[2,4]} = \imath_r^{[5,4]}$,
			\item $\ad(\widetilde  S_q) \circ \imath_r^{[5,4]} = \imath_r^{[5,7]}$,
			\item $\ad(\widetilde  S_q) \circ \imath_r^{[2,4]} = \imath_r^{[2,7]}$,
			\item $\ad(\widetilde S) \circ \imath_r^{[2,4]} = \imath_r^{[5,7]}$,
			\item $ u_t  u_s^* \mu_s^{[2,4]}(g)  u_s  u_t^* = \mu_t^{[2,4]}(g)$,  for any $g \in Z_{\mf p, \mf q}$ and $s,t \in [0,1-\delta]$.
	\end{enumerate}
	The identities (a)-(d) are easily proven by looking at elementary tensors, whereas (e) follows from the definition of $\mu$.   We are now able to prove (\ref{eq:U-aue}). Let $f,g \in \cF$ and  $(s,t) \in [0,1]^2$. Filling in the definitions gives
		\begin{multline}
		\label{eq:adU}
		  U_{s,t}  (\dbtilde \mu (f \otimes g) \otimes 1_{Z_{\mf p^2, \mf q^2}})(s,t) U_{s,t}^*   
		  \\ =  \widetilde S  \bar u_t  \bar u_s^* \imath^{[1,3]}_\mf r (\mu_s(f))  \imath^{[2,4]}_\mf r (\mu_s(g))   \bar u_s  \bar u_t^* \widetilde S^*.
		 \end{multline}
	Note that  $\bar u_t$ and $\widetilde S$ commute  with $M_\mf r^{[1,3]}$. For $(s,t) \in [0,1-\delta]^2$, for example, one checks that (\ref{eq:adU}) becomes 
	\begin{align*}
		  &\imath_\mf r^{[1,3]}(\mu_s(f)) \widetilde S \bar   u_t \bar u_s^* \imath_\mf r^{[2,4]}(\mu_s(g)) \bar u_s \bar u_t^* \widetilde S^* \\
		\overset{\phantom{()} (\ref{eq:unitary-path-approx})\phantom{(4.4)} }{ \approx_{\frac {2 \e}{10}}}   & \ \imath_\mf r^{[1,3]}(\mu_s(f)) \widetilde S u_tu_s^* \imath_\mf r^{[2,4]}(\mu_s(g)) u_su_t^* \widetilde S^* \\
		 \overset{\phantom{()} \text{(e)} \phantom{(4.5)}  } { =_{\phantom{\frac {2 \e}{10}}}} \   & \  \imath_\mf r^{[1,3]}(\mu_s(f)) \widetilde S \imath_\mf r^{[2,4]}(\mu_t(g)) \widetilde S^* \\
		 \overset{\text{(d)}, (\ref{eq:cond-exp}) \ }{ \approx_{\frac {2 \e}{10}}} \ &  \  \imath_\mf r^{[1,3]}(\mu_s(f)) \imath_\mf r^{[5,7]}(\mu_t(g)) \\
		  \overset{ \phantom{()} (\ref{eq:identification}) \phantom{(4.4)}  }{ =_{\hphantom{\frac {2\e}{10}}}}  & \  (\mu^{[1,3]}(f) \otimes \mu^{[1,3]}(g))(s,t).
	\end{align*}
	For $(s,t) \in ([1-\delta,1] \times [0,1-\delta]) \cup ([0,1-\delta] \times [1-\delta,1]) \cup [1-\delta,1]^2$ one proceeds similarly and uses in particular (\ref{eq:uniform-continuity}). This  proves the claim (\ref{eq:U-aue}).
	\par \noindent \textbf{(III)} The next step is to perturb $U$ on the boundary. Since $S_p$ and $S_q$ are connected to the identity in $M_p \otimes M_p$ respectively $M_q \otimes M_q$ we can find a unitary path $(\nu_t)_{t \in [0,1]}$ inside $M_r$ such that the following holds:
	\begin{equation}
		\label{eq:nu}
		\left [
			\begin{array}{ll}
				\nu_0 = \widetilde S_q, \\
				\nu_t \in \cstar(\widetilde S_q) \subseteq  M_r^{[4,7]} & \text{ if } t \in [0,\delta], \\
				\nu_t = 1_r & \text{ if } t \in [\delta,1-\delta], \\
				\nu_t \in \cstar(\widetilde S_p) \subseteq  M_r^{[2,5]} & \text{ if } t \in [1-\delta,1], \\
				\nu_1 = \widetilde S_p.
			\end{array}
			\right .
	\end{equation}
	Observe that 
	\begin{enumerate}[label=(\alph*)]
		\setcounter{enumi}{5}
		\item $[\widetilde S,\nu_t] = 0$, for every $t \in [0,1]$,
		\item $[\nu_s, \imath_\mf r^{[i,j]}(\mu_s(f))] \approx_{\frac {\e}{10} } 0$, for all $s \in [0,1], \ (i,j) \in \{(2,4),(5,7)\}$ and $f \in \cF$.
	\end{enumerate}
	For (f), let $t \in [0,\delta]$. Then, $\nu_t \in M_r^{[4,7]}$. Since $\widetilde S_p \in M_r^{[2,5]}$ we see that $\widetilde S\nu_t \widetilde S^* = \widetilde S_q \nu_t \widetilde S_q^*$. Note that  $\widetilde S_q$ commutes with $\nu_t$ for $t \in [0,\delta]$, since the $\cstar$-algebra generated by $\widetilde S_q$ is commutative. For $t \in [\delta,1-\delta]$ nothing is to check and for $t \in [1-\delta,1]$ we argue similarly as before. Let us check (g) for $(i,j) = (2,4)$ and $s \in [0,\delta]$:
	$$
		 \nu_s \imath_\mf r^{[2,4]}(\mu_s(f)) \nu_s^* \overset{(\ref{eq:uniform-continuity})} \approx_{\frac \e {10}} \nu_s \imath_\mf r^{[2,4]}(\mu_0(f)) \nu_s^*  = \imath_\mf r^{[2,4]}(\mu_0(f)).
	$$
	The last equality follows from the fact that $\mu_0(f) = \alpha_0(f) \otimes 1_\mf q \in M_\mf p \otimes 1_\mf q$ so that $\imath_\mf r^{[2,4]}(\mu_0(f)) \in M_{\mf r}^{[2]}$, whereas $\nu_s \in M_{\mf r}^{[4,7]}$ for $s \in [0,\delta]$. A similar observation applies to the case $s \in [1-\delta,1]$. We can now define the desired unitary $W$ by 
	$$
		W_{s,t} := \begin{cases}
			\nu_{\min(s,t)}U_{s,t} & \text{ if } (s,t) \in [0,\delta]^2, \\
			\nu_{\max(s,t)}U_{s,t} & \text{ if } (s,t) \in [1-\delta,1]^2,  \\
			\nu_t U_{s,t} \nu_s & \text{ else} .
		\end{cases}
	$$
	This should be compared to the construction of Jiang and Su in \cite[Proposition 8.3]{JS}. Now, we have to check the following:
	\begin{enumerate}[label=(\roman*)]
		\item 	$W$ is continuous,
		\item $W \in Z_{\mf p^2, \mf q^2} \otimes Z_{\mf p^2, \mf q^2}$, 
		\item $\ad(W) \circ (\dbtilde \mu  \otimes 1_{Z_{\mf p^2, \mf q^2}}) \approx_{(\cF \otimes \cF, \e)} \mu^{[1,3]} \otimes \mu^{[1,3]}$.
	\end{enumerate}
	Without reference, we will use that
	$$
	 	U_{s,t} = \widetilde S \qquad ((s,t) \in [0,\delta]^2 \cup [1-\delta,1]^2).
	$$
	We note that (i) follows easily from (f). In order to show (ii), we have to  verify that $W$ satisfies all boundary conditions, as explained in Convention \ref{convention:dimension-drop}. The computation is not difficult but tedious, so as an example we  show how to handle the case $t=0$, which splits up into the following three cases:
		\begin{enumerate}[label=$\bullet$]
			\item $s \in [0,\delta]$ : $W_{s,t} = \nu_{\min(s,t)}U_{s,t} = \nu_0 \widetilde S = \widetilde S_q \widetilde S = \widetilde  S_p \in M_r^{[2,5]}$,
			\item $s \in [\delta,1]$ : $W_{s,t} = \nu_t U_{s,t} \nu_s  = \nu_0 U_{s,0} \nu_s  = \widetilde S_q (\widetilde S \bar u_s^*) \nu_s  = \widetilde S_p \bar u_s^* \nu_s$, which lives in $M_r^{[2,5]} M_r^{[2,4]} M_r^{[2,5]} \subseteq M_r^{[2,4,5]}$,
			\item $s = 1$ : $W_{s,t} = \widetilde S_p \bar u_s^* \nu_s =  \widetilde  S_p \bar  u_{1-\delta}^* \widetilde S_p 
			 \in \widetilde S_p M_r^{[2,4]} \widetilde S_p \underset{\text{(a)}} \subseteq M_r^{[4,5]}$.
		\end{enumerate}
	It follows that $W$ is a unitary in $Z_{\mf p^2, \mf q^2} \otimes Z_{\mf p^2, \mf q^2}$ and it remains to prove (iii). For $(s,t) \in [0,\delta]^2$ and $f,g \in \cF$ we compute  
	 \begin{align*}
	 	W_{s,t} (\dbtilde \mu (f \otimes g) \otimes 1_{Z_{\mf p^2,\mf q^2}})(s,t) W_{s,t}^*  &   \overset{\phantom{(f),(4.4)}}{ =_{\phantom{\frac \e {100}}} }   \nu_{\min(s,t)} \widetilde S \imath_\mf r^{[1,3]}(\mu_s(f)) \imath_\mf r^{[2,4]}(\mu_s(g)) \nu_{\min(s,t)}^* \widetilde S^*  \\
	 	&   \overset{\text{(f)},(\ref{eq:nu})}{=_{\phantom{\frac \e {00}}}}    \widetilde S  \imath_\mf r^{[1,3]}(\mu_s(f)) \nu_{\min(s,t)}\imath_\mf r^{[2,4]}(\mu_s(g)) \nu_{\min(s,t)}^*\widetilde S^* \\
	   & \overset{ \  \text{(g)} \quad \  } {\approx_{\frac {\e} {10}}} \ \widetilde S  \imath_\mf r^{[1,3]}(\mu_s(f)) \imath_\mf r^{[2,4]}(\mu_s(g)) \widetilde S^* \\
	   	  & \overset{\phantom{(f),(4.4)}}{ =_{\phantom{\frac \e {100}}} }   U_{s,t} (\dbtilde \mu(f \otimes g) \otimes 1_{Z_{\mf p^2, \mf q^2}})(s,t) U_{s,t}^*  \\
	   	  &  \overset{(\ref{eq:U-aue}) \ \ }{  \approx_{\frac \e 2}}  (\mu^{[1,3]}(f) \otimes \mu^{[1,3]}(g))(s,t).
	\end{align*}
	The computation for   $(s,t) \in [1-\delta,1]^2$ is similar. For the remaining part, we get 
	\begin{align*}
		W_{s,t} (\dbtilde \mu (f \otimes g) \otimes 1_{Z_{\mf p^2,\mf q^2}})(s,t)W_{s,t}^* & \overset{\phantom{(g) \quad }}{=_{\phantom{\frac {2\e} {10}}} } \nu_t U_{s,t} \nu_s \imath_\mf r^{[1,3]}(\mu_s(f))\imath_\mf r^{[2,4]}(\mu_s(g))\nu_s^*U_{s,t}^*\nu_t^* \\
	& \ \overset{\text{(g)} \quad }{	\approx_{\frac {2\e }{10}}} \nu_t U_{s,t}  \imath_\mf r^{[1,3]}(\mu_s(f)) \imath_\mf r^{[2,4]}(\mu_s(g))U_{s,t}^*\nu_t^* \\
		& \overset{(\ref{eq:U-aue}) \ \ }{  \approx_{\frac \e 2}} \nu_t \imath_\mf r^{[1,3]}(\mu_s(f))\imath_\mf r^{[5,7]}(\mu_t(g)) \nu_t^* \\
	 & \ \overset{\text{(g)} \quad }{	 \approx_{\frac {2\e} {10}}} \imath_\mf r^{[1,3]}(\mu_s(f)) \imath_\mf r^{[5,7]}(\mu_t(g)) \\
	 &  \ \overset{(\ref{eq:identification}) \ \ }{ =_{\phantom{\frac {2\e} {10}}}} (\mu^{[1,3]}(f) \otimes \mu^{[1,3]}(g))(s,t).
	\end{align*}
	This finishes the proof.
\end{proof}

\begin{defn}
	\label{def:APQ}
	Let $\mf p$ and $\mf q$ be coprime supernatural numbers of infinite type. By Theorem \ref{th:existence-unitarily-suspended}, there exists a unitarily suspended standard $\Star$-homomorphism $\mu \colon Z_{\mf p, \mf q} \to Z_{\mf p, \mf q}$, which we fix for the rest of this section. Define $$
		\bar \mu \colon Z_{\mf p, \mf q} \to Z_{\mf p, \mf q}; \quad \bar \mu \coloneqq  \Phi \circ \mu^{[1,3]}, 
	$$ 
	where $\Phi \colon Z_{\mf p^2, \mf q^2} \to Z_{\mf p, \mf q}$ is the canonical isomorphism, induced  fiberwise by isomorphisms $M_{\mf p^2} \to M_\mf p$ and $M_{\mf q^2} \to M_\mf q$.  We then define 
	$$
		\ZZ \coloneqq \varinjlim (Z_{\mf p,\mf q}, \bar  \mu).
	$$
	That is, $\ZZ$ is the stationary inductive limit of $Z_{\mf p, \mf q}$ along $\bar \mu$.
\end{defn}

\begin{theorem}
	\label{th:SSA}
		The $\cstar$-algebra $\ZZ$ is strongly self-absorbing. Furthermore, for any sequence $(\varphi_n)_{n=1}^\infty$ of standard $\Star$-endomorphisms of $Z_{\mf p, \mf q}$ we have 
		$$
		\ZZ  \cong	\varinjlim (Z_{\mf p, \mf q}, \varphi_n) .
		$$
\end{theorem}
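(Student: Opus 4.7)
The plan is to run an Elliott two-sided approximate intertwining between the inductive systems $(Z_{\mf p, \mf q}, \bar\mu)$ and $(Z_{\mf p, \mf q} \otimes Z_{\mf p, \mf q}, \bar\mu \otimes \bar\mu)$, whose inductive limits are $\ZZ$ and $\ZZ \otimes \ZZ$ respectively, and then check that the resulting isomorphism is approximately unitarily equivalent to the first factor embedding.

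First I would introduce the backward map $\nu \coloneqq \Phi \circ \dbtilde\mu \colon Z_{\mf p, \mf q} \otimes Z_{\mf p, \mf q} \to Z_{\mf p, \mf q}$ and take as the forward map the first factor embedding $\phi \coloneqq \id_{Z_{\mf p, \mf q}} \otimes 1_{Z_{\mf p, \mf q}}$. The two key identities are:
\begin{enumerate}[label=(\roman*)]
    \item $\nu \circ \phi = \bar\mu$ exactly, using only that $\mu$ is unital (so $\mu^{[2,4]}(1)=1$, hence $\dbtilde\mu(f \otimes 1) = \mu^{[1,3]}(f)$);
    \item $\phi \circ \nu \approx_{\T u} \bar\mu \otimes \bar\mu$, obtained by applying $\Phi \otimes \Phi$ to both sides of Proposition \ref{prop:intertwining} and noting that $(\Phi \otimes \Phi)\bigl((\id \otimes 1)\circ\dbtilde\mu(f)\bigr) = \nu(f)\otimes 1 = \phi(\nu(f))$.
\end{enumerate}
These are exactly the intertwining identities required by the Elliott argument.

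Next I would set up the approximate intertwining in the standard manner: choose exhausting finite subsets $\cF_n \subseteq Z_{\mf p, \mf q}$, $\cG_n \subseteq Z_{\mf p, \mf q} \otimes Z_{\mf p, \mf q}$ and a summable sequence $\e_n > 0$; use (ii) to pick unitaries $w_n \in Z_{\mf p, \mf q} \otimes Z_{\mf p, \mf q}$ with $\ad(w_n) \circ \phi \circ \nu \approx_{\e_n} \bar\mu \otimes \bar\mu$ on $\cG_n$; and replace $\phi$ at the $n$-th column by the perturbation $\tilde\phi_n \coloneqq \ad(w_1 w_2 \cdots w_n) \circ \phi$. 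Together with (i) this produces a diagram that commutes up to summable error, and passing to the inductive limit yields a $\Star$-isomorphism $\Theta \colon \ZZ \to \ZZ \otimes \ZZ$ with inverse induced by $\nu$. Since each $\tilde\phi_n$ is unitarily equivalent to $\phi$ in $Z_{\mf p, \mf q} \otimes Z_{\mf p, \mf q}$, the limit map $\Theta$ is approximately unitarily equivalent to the first factor embedding $\id_{\ZZ} \otimes 1_{\ZZ}$, giving strong self-absorption.

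For the second statement I would use Theorem \ref{th:aue-endo}: any two standard $\Star$-endomorphisms of $Z_{\mf p, \mf q}$ are approximately unitarily equivalent, and $\bar\mu$ is standard by construction. Given an arbitrary sequence $(\varphi_n)$ of standard endomorphisms, a routine one-sided telescoping (at step $n$ choose a unitary $u_n \in Z_{\mf p, \mf q}$ so that $\ad(u_n) \circ \varphi_n$ matches $\bar\mu$ to within $2^{-n}$ on the image of a suitable exhausting finite set) produces mutually inverse connecting $\Star$-homomorphisms between $\varinjlim(Z_{\mf p, \mf q}, \varphi_n)$ and $\ZZ = \varinjlim(Z_{\mf p, \mf q}, \bar\mu)$ at the level of the limits. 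The main (essentially only) obstacle has already been cleared by Proposition \ref{prop:intertwining}; the delicate remaining point is simply to track that the unitary perturbations on the forward maps survive the passage to the limit and yield the approximate unitary equivalence of $\Theta$ with $\id_{\ZZ} \otimes 1_{\ZZ}$.
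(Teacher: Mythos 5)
Your proposal is correct and mirrors the paper's own proof: both run an Elliott approximate intertwining with backward map $\Phi\circ\dbtilde\mu$ and forward map $\id\otimes 1$, using Proposition~\ref{prop:intertwining} for the approximate commutativity of one triangle and the exact computation $(\Phi\circ\dbtilde\mu)\circ(\id\otimes 1)=\bar\mu$ for the other, then conclude by R{\o}rdam's intertwining criterion, while the second part follows from Theorem~\ref{th:aue-endo} exactly as you say. The only tiny wrinkle is terminological: what you call a ``one-sided telescoping'' for the second statement is in substance a two-sided Elliott intertwining between $(Z_{\mf p,\mf q},\varphi_n)$ and $(Z_{\mf p,\mf q},\bar\mu)$ with identity maps as the other leg, which is exactly what the paper invokes via \cite[Corollary 2.3.3]{R}.
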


\begin{proof} 
	Let us consider the following diagram, where $A \coloneqq \varinjlim (Z_{\mf p, \mf q}, \varphi_n)$:
		\[	
		\xymatrix{
		\ar@{}[d]^{\text{(I)}} & 	Z_{\mf p, \mf q}  \otimes Z_{\mf p, \mf q}  \ar[r]^-{(\bar \mu)^{\otimes 2}} \ar[d]_{\Phi \circ \dbtilde{\mu}} & Z_{\mf p, \mf q}  \otimes Z_{\mf p, \mf q}  \ar[d]_{\Phi \circ \dbtilde{\mu}} \ar[r]^{(\bar \mu)^{\otimes 2}}  & Z_{\mf p, \mf q}  \otimes Z_{\mf p, \mf q}  \ar[d]_{\Phi \circ \dbtilde{\mu}}   \ar[r] & \cdots  \ar[r]  & \ZZ \otimes \ZZ \\
	 \ar@{}[d]^{\text{(II)}} & 	Z_{\mf p, \mf q}  \ar[ur]|-{\id \otimes 1} \ar[r]|-{\bar \mu}  & Z_{\mf p, \mf q}   \ar[ur]|-{\id \otimes 1} \ar[r]|-{\bar \mu} & Z_{\mf p, \mf q}   \ar[r]& \cdots \ar[r] & \ZZ \ar@{-->}[u]_\varphi^\cong \\
	& 	Z_{\mf p, \mf q}  \ar@{=}[u] \ar[ur]|-{\varphi_1} \ar[r]_{\varphi_1}  & Z_{\mf p, \mf q}  \ar@{=}[u]  \ar[ur]|-{\varphi_2} \ar[r]_{\varphi_2} & Z_{\mf p, \mf q} \ar@{=}[u]   \ar[r] & \cdots \ar[r] & A \ar@{-->}[u]^\cong
		}
	\]
	Let us first have a look at row (I). By Proposition \ref{prop:intertwining}, we have $(\id \otimes 1) \circ \dbtilde{\mu} \approx_{\T u} \mu^{[1,3]} \otimes \mu^{[1,3]}$. By composing with $\Phi \otimes \Phi$ it follows that 
	$$
		(\id \otimes 1) \circ (\Phi \circ \dbtilde {\mu}) \approx_{\T u} \bar \mu \otimes \bar \mu.
	$$  
	Furthermore, one  easily computes that 
	$$
		(\Phi \circ \dbtilde{\mu}) \circ (\id \otimes 1) = \Phi \circ \mu^{[1,3]} = \bar \mu.
	$$
	This shows that each triangle in (I) commutes approximately up to conjugating by a unitary.  By \cite[Corollary 2.3.3]{R} it follows that there exist unitaries $(u_n)_{n=1}^\infty$ in $Z_{\mf p, \mf q} \otimes Z_{\mf p, \mf q}$ such that if $\id \otimes 1$ is replaced by $\ad(u_n) \circ (\id \otimes 1)$, then the above diagram is an  approximate intertwining (in the sense of  \cite[Definition 2.3.1]{R}). By the same corollary the induced isomorphism $\varphi \colon \ZZ \to \ZZ \otimes \ZZ$ satisfies 
	$$
		\varphi(x) = \lim_{n \to \infty} u_n (x \otimes 1)u_n^* \qquad (x \in  \ZZ) ,
	$$
	where now each $u_n$ is considered as an element of $\ZZ \otimes \ZZ$. By definition, it follows that $\ZZ$ is strongly self-absorbing.
	\par Next, we look at row (II). Clearly, all lower triangles commute and by Theorem \ref{th:aue-endo} we have that $\varphi_n \approx_{\T u} \bar \mu$, for each $n \in \N$\footnote{Note that $\bar \mu$ is still standard.}. Again, by an approximate intertwining argument, it follows that $A \cong \ZZ$.
\end{proof}

\begin{remark}
The previous theorem picks up the spirit of \cite[Theorem 3.4]{RW} and shows that any stationary inductive limit with a standard (hence trace collapsing) $\Star$-endomorphism is strongly self-absorbing. In our approach however, we do not have to compare the limit to the Jiang-Su algebra $\cZ$.
\end{remark}

\begin{lemma}
	\label{lem:dimension-drop-embedding}
	Let $n \in \N$. Then, there exists $k \in \N$ and a unital $\Star$-homomorphism $Z_{n,n+1} \to Z_{2^k,2^k+1}$.
\end{lemma}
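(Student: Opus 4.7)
My plan is to construct the unital $\Star$-homomorphism directly as a continuous family. Specifically, I would define $\varphi \colon Z_{n,n+1} \to Z_{2^k, 2^k+1}$ by $\varphi(f)(t) = \psi_t(f)$, where $\psi_t \colon Z_{n,n+1} \to M_{2^k(2^k+1)}$ is a continuous path of unital $\Star$-homomorphisms whose image at $t = 0$ lies in $M_{2^k} \otimes 1_{2^k+1}$ and at $t = 1$ lies in $1_{2^k} \otimes M_{2^k+1}$.

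The arithmetic setup is as follows. Since $\gcd(n, n+1) = 1$, the Sylvester--Frobenius theorem ensures that every integer $\geq n(n-1)$ can be written as $a n + b(n+1)$ with $a, b \in \N$. For $k$ large, fix
\[
2^k = \alpha_0 n + \beta_0(n+1), \qquad 2^k + 1 = \alpha_1 n + \beta_1(n+1).
\]
Using this data I define boundary maps $\varphi_0 \colon Z_{n,n+1} \to M_{2^k}$ by $\varphi_0(f) = f(0)^{\oplus \alpha_0} \oplus f(1)^{\oplus \beta_0}$ and analogously $\varphi_1 \colon Z_{n,n+1} \to M_{2^k+1}$; these will furnish the values of $\varphi$ at $t = 0$ and $t = 1$.

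To interpolate, I would exploit the standard structure theorem for $\Star$-homomorphisms $Z_{n,n+1} \to M_N$: up to unitary equivalence, each is a direct sum of irreducible evaluations, yielding blocks of size $n$ (evaluation at $0$), $n+1$ (at $1$), and $n(n+1)$ (at interior points). I write
\[
\psi_t(f) = U(t)\left[\,\bigoplus_i f(\lambda_i(t)) \otimes 1_{m_i}\,\right] U(t)^*
\]
with continuous evaluation paths $\lambda_i \colon [0,1] \to [0,1]$ and a continuous unitary path $U(t)$ in $M_{2^k(2^k+1)}$. The key observation is that when an interior path reaches $0$ (resp.\ $1$), a single interior block of size $n(n+1)$ degenerates into $n+1$ copies of the evaluation-at-$0$ (resp.\ $n$ copies of evaluation-at-$1$). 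Choosing $\alpha$ always-at-$0$ blocks, $\beta$ always-at-$1$ blocks, and $\gamma$ interior paths distributed between $0$ and $1$ at each endpoint, the requirement that the resulting block structures at $t = 0$ and $t = 1$ match those of $\varphi_0 \otimes 1_{2^k+1}$ and $1_{2^k} \otimes \varphi_1$ yields a system of linear Diophantine equations in $\alpha, \beta, \gamma$ and the numbers of transitional paths. Reducing the defining equations $2^k = \alpha_0 n + \beta_0(n+1)$ and $2^k+1 = \alpha_1 n + \beta_1(n+1)$ modulo $n+1$ and $n$ yields the congruences $\alpha_1 \equiv \alpha_0 - 1 \pmod{n+1}$ and $\beta_1 \equiv \beta_0 + 1 \pmod n$, which make the required residue conditions on $\alpha$ and $\beta$ automatically compatible, so that for $k$ sufficiently large a non-negative integer solution exists.

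The main obstacle is this combinatorial bookkeeping. Once compatible multiplicities are in hand, the continuous unitary path $U(t)$ is obtained from the path-connectedness of the unitary group of $M_{2^k(2^k+1)}$: both boundary block models have the same multiplicity data as the chosen interior block-diagonal model, so each is unitarily equivalent to it and the required endpoint rearrangements can be realized by a continuously varying unitary. Assembling $\psi_t$ then produces $\varphi \colon Z_{n,n+1} \to Z_{2^k, 2^k+1}$ as desired.
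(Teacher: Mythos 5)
Your proposal takes a genuinely different route from the paper, and the underlying idea is sound. The paper does not build the map fiberwise: it takes $k$ with $2^k \geq n(n+1)$, invokes a result of Sato (\cite[Proposition 2.1]{SATO}) to produce elements $s,c_1,\dots,c_{2^k}$ in $Z_{2^k,2^k+1}$ obeying order-zero-type relations, assembles from the $c_j$ a system of $n$ pairwise orthogonal, mutually equivalent positive elements $b_1,\dots,b_n$, checks the Cuntz comparison $1-(b_1+\cdots+b_n) \lesssim (b_1-\varepsilon)_+$ via the way-below relation using the Euclidean division $2^k = nd + r$ with $d \geq n+1 > r+1$, and then cites the abstract embedding criterion \cite[Proposition 5.1(ii)]{RW}. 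You instead construct $\varphi$ directly as a continuous family $\psi_t$ of finite-dimensional representations of $Z_{n,n+1}$, built from a fixed block-diagonal model and a continuous unitary path, and you solve a Diophantine system in block multiplicities so that $\psi_0$ and $\psi_1$ factor through $M_{2^k}$ and $M_{2^k+1}$ respectively. Your congruences $\alpha_1 \equiv \alpha_0 - 1 \pmod{n+1}$ and $\beta_1 \equiv \beta_0 + 1 \pmod n$ are indeed the exact obstructions and they do resolve, and the total-dimension identity is automatically implied by $\alpha_0 n + \beta_0(n+1) = 2^k$ and $\alpha_1 n + \beta_1(n+1) = 2^k+1$. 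What each approach buys: the paper's argument is short, outsourcing the work to Cuntz comparison and an external existence theorem; yours is longer but entirely hands-on, needing only the elementary classification of finite-dimensional representations of the subhomogeneous algebra $Z_{n,n+1}$ and the endpoint degenerations (an $n(n+1)$-block splitting into $n{+}1$ copies of the $n$-dimensional fibre at $0$, and into $n$ copies of the $(n{+}1)$-dimensional fibre at $1$). To make your sketch complete you would need to carry out the routine estimate showing non-negative integer solutions exist once $k$ is large, and to state explicitly that $U(0)$ and $U(1)$ must conjugate the block model literally into $M_{2^k}\otimes 1_{2^k+1}$ and $1_{2^k}\otimes M_{2^k+1}$, not merely into something unitarily equivalent to them, which is precisely what the multiplicity matching guarantees.
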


\begin{proof}
	Choose $k \in \N$ such that $2^k \geq n(n+1)$ and write $2^k = n d + r$,  where $r \in \{0,1,\cdots,n-1\}$. Then clearly $d \geq  n +1  > r+1$. By \cite[Proposition 2.1]{SATO}, there exist elements $s,c_1,c_2, \cdots, c_{2^k}$ in $Z_{2^k,2^k+1}$ such that 
	 $$
	 	c_1 \geq 0, \quad c_ic_j^* = \delta_{i,j} c_1^2,  \quad s^*s + \sum_{j=1}^{2^k} c_j^*c_j = 1 \  \text{ and } \ c_1s = s.
	 $$
	 For $i = 1,2, \cdots, n$ define  
	 $$
	 	x_i \coloneqq \sum_{j = (i-1)\cdot d +1 }^{i \cdot d} c_j  \quad \text{ and } \quad   b_i \coloneqq x_i^*x_i = \sum_{j = (i-1) \cdot d +1}^{i \cdot d} c_j^*c_j.
	 $$
	  Note that the $b_i$ are mutually equivalent orthogonal positive elements. By the relations on the $c_j$ and $s$ it is easy to see that  
	 \begin{enumerate}[label=(\alph*)]
	 		\item $1 = b_1 + b_2 + \cdots b_n + (c_{nd +1}^*c_{nd+1} + \cdots + c_{nd+r}^*c_{nd+r}) + s^*s$,
	 		\item $[s^*s] \leq [c_1]$ and $[c_1] = [c_1^2] = [c_1^*c_1]$,
	 		\item $ d[c_1]  = [b_1]$.
	 \end{enumerate}
	 Here $[a]$ denotes the class of an element $a$ in $W(Z_{2^k,2^k+1})$, the Cuntz semigroup of $Z_{2^k,2^k+1}$. Now we have 
	\begin{align*}
		[1-(b_1+\cdots + b_n)] &  \overset{(a)} =  [(c_{nd +1}^*c_{nd+1} + \cdots + c_{nd+r}^*c_{nd+r}) + s^*s] \\
		& \leq [c_{nd+1}^*c_{nd+1}] + \cdots + [c_{nd+r}^*c_{nd+r}] + [s^*s] \\
		& = r [c_1^*c_1] + [s^*s]  \overset{(b)} \leq r[c_1^*c_1] + [c_1] = (r+1)[c_1] \\
		& \ll d [c_1] = [b_1].
	\end{align*}
	The last line follows since $r+1 < d$. Here $\ll$ denotes the \textit{way-below} relation, see for example \cite[Section 3]{NAWATA}. Hence, there exists $\e > 0$ such that 
	$$
		1-(b_1 + \cdots + b_n) \lesssim (b_1 - \e )_+.
	$$		
	 By \cite[Proposition 5.1 (ii)]{RW}, it follows that  $Z_{n,n+1}$ embeds unitally into $Z_{2^k,2^k+1}$.
\end{proof}

\begin{prop}
	\label{prop:dimension-drop-embedding-ssa}
	Let $p,q \in \N$ be coprime. Then $Z_{p,q}$ maps unitally into any strongly self-absorbing $\cstar$-algebra.
\end{prop}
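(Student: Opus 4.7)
The plan is to factor the desired unital map $Z_{p,q} \to D$ through the Jiang-Su-like algebra $\ZZ$ built in Section~4. Choose $\mf p := p^\infty$ and $\mf q := q^\infty$, which are coprime supernatural numbers of infinite type. The canonical unital inclusions $M_p \hookrightarrow M_\mf p$ and $M_q \hookrightarrow M_\mf q$, applied fiberwise, respect the boundary conditions defining the dimension drop algebras and so induce a unital $\Star$-homomorphism $Z_{p,q} \hookrightarrow Z_{\mf p, \mf q}$. Composing with the canonical inductive limit map $Z_{\mf p, \mf q} \to \ZZ = \varinjlim(Z_{\mf p, \mf q}, \bar \mu)$, with $\bar \mu$ obtained from a unitarily suspended $\mu$ via Theorem~\ref{th:existence-unitarily-suspended}, produces a unital $\Star$-homomorphism $Z_{p,q} \to \ZZ$.

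The second and key step is to show that $\ZZ$ maps unitally into every SSA $D$, i.e., that $\ZZ$ is initial in the category of SSA $\cstar$-algebras. Both $\ZZ$ (Theorem~\ref{th:SSA}) and $D$ being SSA have approximately inner half flip, and the description $\ZZ = \varinjlim(Z_{\mf p, \mf q}, \bar \mu)$ invites an approximate intertwining: one would build a sequence of unital $\Star$-homomorphisms $\psi_n \colon Z_{\mf p, \mf q} \to D$ satisfying $\psi_{n+1} \circ \bar \mu \approx_\T{u} \psi_n$ on an increasing sequence of finite subsets, inducing the desired $\ZZ \to D$ via the standard intertwining argument of \cite[Corollary 2.3.3]{R}. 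Composing with the first step then yields $Z_{p, q} \to D$.

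The main obstacle is to produce even a single unital $\Star$-homomorphism $\psi \colon Z_{\mf p, \mf q} \to D$. In the UHF setting of Section~3, Lemma~\ref{lem:commuting-cones} directly supplies commuting c.p.c.\ order zero maps $M_\mf p \to U$ and $M_\mf q \to U$ with units summing to $1$, but an arbitrary SSA $D$ need not absorb $M_\mf p$ or $M_\mf q$ tensorially. One would have to exploit the SSA isomorphism $D \cong D \otimes D$ (approximately unitarily equivalent to the first factor embedding) together with the infinite type of $\mf p$ and $\mf q$ in order to build an appropriate pair of commuting c.p.c.\ order zero maps in $D$ whose units sum to $1_D$; these then assemble to the desired $\psi$ by the universal property of $Z_{\mf p, \mf q}$ as generated by its two commuting cones. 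The uniqueness needed to close up the intertwining should follow by an adaptation of Theorem~\ref{th:aue-endo} to the target $D$, with the role of the unique trace in the UHF arguments being played by the central $D$-copies supplied by the half flip.
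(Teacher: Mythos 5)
Your proposal is not a proof; it sketches a programme with two essential gaps, and the route you take is fundamentally different from the paper's, which bypasses the difficulties by quoting external results. The paper simply chains three unital embeddings:
\[
	Z_{p,q}\longrightarrow Z_{kp,lq}=Z_{n,n+1}\longrightarrow Z_{2^k,2^k+1}\longrightarrow D,
\]
using Jiang--Su's \cite[Proposition 2.5]{JS} for the first arrow, the paper's own Lemma~\ref{lem:dimension-drop-embedding} for the second, and Winter's \cite[Theorem 3.1]{WSSA} (that $Z_{2^k,2^k+1}$ maps unitally into any strongly self-absorbing $\cstar$-algebra) for the third. The substantial content of ``embedding a prime dimension drop algebra into an arbitrary SSA $D$'' is delegated entirely to Winter's theorem.

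Your plan is to factor through $\ZZ$ and show $\ZZ$ is initial among strongly self-absorbing $\cstar$-algebras. The first gap is the one you yourself flag: you need at least one unital $\Star$-homomorphism $Z_{\mf p,\mf q}\to D$ to seed the intertwining. Nothing in Section~3 helps here --- Lemma~\ref{lem:commuting-cones} and its consequences (Corollary~\ref{cor:existence-finite}, Lemma~\ref{lem:standard-supernatural}) produce such maps only into UHF-algebras, relying heavily on real rank zero, unique trace and the explicit Lebesgue-contraction constructions, none of which are available in a general SSA $D$. The vague remark about ``exploiting $D\cong D\otimes D$'' is exactly where Winter's theorem does the work, and there is no indication how you would reproduce it; observe also that a unital map $Z_{\mf p,\mf q}\to D$ for $\mf p=p^\infty$, $\mf q=q^\infty$ is strictly stronger than a unital map $Z_{p,q}\to D$, so you have reduced the proposition to a harder statement. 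The second gap is the uniqueness needed to close up the intertwining: Theorem~\ref{th:aue-endo} is proved through Lemma~\ref{lem:aue-dimension-drop} and Lemma~\ref{lem:BHL-dimension-drop}, which hinge on the codomain fibres being UHF-algebras (strong comparison of projections via the trace, real rank zero, stable rank one, the Basic Homotopy Lemma of \cite{BEEK}). There is no ``adaptation'' to an arbitrary SSA $D$ offered or evident; this would require a genuine new argument. Finally, note that in the paper the initiality of $\ZZ$ (Theorem~\ref{thm:structure}(i)) is proved via Lemma~\ref{lem:initial-ssa}, which itself invokes Proposition~\ref{prop:dimension-drop-embedding-ssa}; so an attempt to derive the proposition from initiality must carefully avoid implicitly reusing that chain, as otherwise it becomes circular.
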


\begin{proof}
	Let $p,q \in \N$ be coprime. We can find $k,l \in \N$ such that $lq - kp = 1$ and $2l > p, \ 2k > q$. With $n \coloneqq kp$, there exists by \cite[Proposition 2.5]{JS} a unital $\Star$-homomorphism $Z_{p,q} \to Z_{kp,lq} = Z_{n,n+1}$ and  by Lemma \ref{lem:dimension-drop-embedding}, $Z_{n,n+1}$ embeds unitally into $Z_{2^k,2^k+1}$, for some large enough $k$. Finally, by \cite[Theorem 3.1]{WSSA}, $Z_{2^k,2^k+1}$ maps unitally into any strongly self-absorbing $\cstar$-algebra.
\end{proof}

\begin{lemma}
	\label{lem:initial-ssa}
	Let $A$ be a strongly self-absorbing $\cstar$-algebra that is locally approximated by prime dimension drop algebras. Then $A$ embeds unitally into any strongly self-absorbing $\cstar$-algebra.
\end{lemma}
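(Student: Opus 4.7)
The plan is to construct a unital $\Star$-homomorphism $A \to D$ by an approximate intertwining along a local prime dimension drop approximation of $A$. Using the local approximation hypothesis, write $A = \overline{\bigcup_n A_n}$ for an increasing sequence of unital $\cstar$-subalgebras with $A_n \cong Z_{p_n,q_n}$, $p_n, q_n$ coprime primes. Choose an exhausting sequence $F_n \subseteq A_n$ of finite sets with dense union in $A$ and a summable sequence $\varepsilon_n > 0$.

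By Proposition \ref{prop:dimension-drop-embedding-ssa}, each $A_n$ admits a unital $\Star$-homomorphism into $D$. Inductively construct unital $\Star$-homomorphisms $\psi_n \colon A_n \to D$ and unitaries $u_n \in D$ such that
\[
\bigl\|\psi_{n+1}(a) - u_n\psi_n(a) u_n^*\bigr\| < \varepsilon_n \qquad (a \in F_n).
\]
At step $n+1$, Proposition \ref{prop:dimension-drop-embedding-ssa} provides some $\sigma \colon A_{n+1} \to D$. Assuming the uniqueness principle below, choose a unitary $u \in D$ with $\ad(u) \circ \sigma|_{A_n} \approx_{\varepsilon_n} \psi_n$ on $F_n$, and set $\psi_{n+1} \coloneqq \ad(u) \circ \sigma$, $u_n \coloneqq u$. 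A telescoping argument with the cumulative conjugations $\ad(u_1 u_2 \cdots u_n)$ then defines the desired unital $\Star$-homomorphism $\Psi \colon A \to D$ on the dense set $\bigcup_n F_n$, which extends by continuity.

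The main obstacle is the required \emph{uniqueness}: any two unital $\Star$-homomorphisms $\phi,\phi' \colon Z_{p,q} \to D$ should be approximately unitarily equivalent. This is not automatic --- $D$ may carry several tracial states whose compositions with $\phi$ and $\phi'$ induce inequivalent traces on $Z_{p,q}$ --- so one first restricts attention to the normalised embeddings $Z_{p,q} \to Z_{2^k,2^k+1} \to D$ produced by Proposition \ref{prop:dimension-drop-embedding-ssa}, which factor through the Winter embedding $Z_{2^k,2^k+1} \hookrightarrow D$ of \cite[Theorem 3.1]{WSSA} and thereby induce a common trace profile. To establish approximate unitary equivalence of two such embeddings, I would proceed by a tensor-product/rotation trick in the spirit of paragraph \ref{rotation-unitaries} and Proposition \ref{prop:intertwining}: form $\phi \otimes 1_D$ and $1_D \otimes \phi'$ as unital $\Star$-homomorphisms $Z_{p,q} \to D \otimes D$ with commuting images, rotate the two copies into each other via a continuous unitary path built from the approximate innerness of the flip on the UHF subalgebras of $D$ supplied by its strong self-absorption, and then collapse via an ssa isomorphism $D \otimes D \cong D$ (approximately unitarily equivalent to the first factor embedding) to recover the required unitary in $D$. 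Carrying out this rotation-and-collapse argument cleanly inside the general ssa target $D$ --- without the explicit UHF-absorption structure that underwrote the corresponding arguments in Section 2 --- and controlling the accumulated errors so that they remain summable across the induction, is the principal technical burden.
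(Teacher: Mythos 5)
Your strategy---a one-sided approximate intertwining along an exhaustion by prime dimension drop algebras---founders on an obstruction you have correctly identified but not resolved: it needs an approximate uniqueness theorem asserting that any two unital $\Star$-homomorphisms $Z_{p,q} \to D$ are approximately unitarily equivalent, and no such result is available for a general strongly self-absorbing $D$. Your proposed rotation-trick fix does not close this gap. The twisting unitaries of \ref{rotation-unitaries} are built inside $M_{\mf p} \otimes M_{\mf p}$, and the whole device requires the codomain to absorb the relevant UHF-algebras tensorially (indeed the entire uniqueness machinery of Section 3 is set up exclusively for UHF-absorbing targets); a general strongly self-absorbing $D$ --- for instance $D = \cZ$ itself --- need not contain any unital copy of a UHF-algebra, so the construction cannot even be launched. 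Moreover, even granting that the flip on $D \otimes D$ is approximately inner, this yields only almost-commutativity of $\phi \otimes 1_D$ and $1_D \otimes \phi'$; by itself it does not produce a unitary conjugating one prescribed embedding onto the other, which is what one would need to feed back into the intertwining estimate. There is also a preliminary gap: ``locally approximated'' does not immediately give an increasing chain of literal unital subalgebras $A_n \cong Z_{p_n,q_n}$; passing from local approximation to an inductive limit presentation requires the semiprojectivity of prime dimension drop algebras.

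The paper avoids uniqueness entirely. It first invokes Loring's theorem \cite[Theorem 3.8]{LOR} to upgrade local approximation to an inductive limit $A = \varinjlim Z_{p_n,q_n}$. Proposition \ref{prop:dimension-drop-embedding-ssa} provides a unital embedding of each $Z_{p_n,q_n}$ into $D$, and since $D$ is strongly self-absorbing it embeds unitally into its own central sequence algebra $D_\omega \cap D'$ \cite[Theorem 2.2]{TW}, so each $Z_{p_n,q_n}$ embeds unitally into $D_\omega \cap D'$. A McDuff-type absorption criterion \cite[Proposition 2.2]{TWASH} then yields $D \cong A \otimes D$, and the desired unital embedding of $A$ into $D$ is the first factor map $a \mapsto a \otimes 1_D$ composed with this isomorphism. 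No approximate intertwining, and hence no uniqueness result for maps into $D$, is required; existence alone suffices once it is run in the central sequence algebra.
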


\begin{proof}
	 Let $D$ be a strongly self-absorbing $\cstar$-algebra. Since $A$ is locally approximated by prime dimension drop algebras $Z_{p,q}$ with $p,q \in \N$, it follows by \cite[Theorem 3.8]{LOR} that $A$ is an inductive limit of such dimension drop algebras. By Proposition \ref{prop:dimension-drop-embedding-ssa}, each of these embed unitally into $D$ and hence into the central sequence algebra $D_\omega \cap D'$, see \cite[Theorem 2.2]{TW}. By \cite[Proposition 2.2]{TWASH}\footnote{The assumption of $\K_1$-injectivity is this proposition is superfluous.} we see that $D \cong A \otimes D$ and hence $A$ embeds unitally into $D$.
\end{proof}

For the next theorem  we first recall Kirchberg's definition of a  central sequence algebra (cf.~\cite[Definition 1.1]{K}).

\begin{defn}
	Let $A$ be a $\cstar$-algebra. We then define 
	$$
		F(A) \coloneqq \frac {A_\omega \cap A'}{\T{Ann}(A,A_\omega)},
	$$
	where $\T{Ann}(A,A_\omega) \coloneqq \{x \in A_\omega : xA = Ax = \{0\} \}$. 
\end{defn}	

\begin{remark}
	If $A$ is $\sigma$-unital and $(h_n)_{n=1}^\infty$ is an approximate unit for $A$, then $F(A)$ is unital with unit $[(h_n)_{n=1}^\infty] + \T{Ann}(A,A_\omega)$.
\end{remark}

\begin{theorem}
	\label{thm:structure}
		Let $\mf p$ and $\mf q$ be coprime supernatural numbers of infinite type and let $\{\varphi_n \colon Z_{\mf p, \mf q} \to Z_{\mf p, \mf q}\}_{n=1}^\infty$ be a sequence of standard $\Star$-endomorphisms. Let us denote 
		$$
			A \coloneqq \varinjlim (Z_{\mf p, \mf q}, \varphi_n).
		$$
		Then the following holds:
		\begin{enumerate}[label=$\mathrm{(\roman*)}$]
			\item $A$ is the initial object in the category\footnote{The morphisms in this category are approximate unitary equivalence classes of $\Star$-homomorphisms. This ensures that the initial object is unique, cf.~\cite[Proposition 5.12]{TW}.} of strongly self-absorbing $\cstar$-algebras.
			\item If $B$ is any separable $\cstar$-algebra such that  there exists a unital \\ $\Star$-homomorphism $Z_{2,3} \to F(B)$,  then $B \cong B \otimes A$,
			\item If $B$ is any separable $\cstar$-algebra and $Z_{2,3}$ maps unitally into $M(B)_\omega \cap B'$, then $B \cong B \otimes A$, and where $M(B)$ denotes the multiplier algebra of $B$.
	\end{enumerate}
\end{theorem}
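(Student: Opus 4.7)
For part (i), the strong self-absorption of $A$ is already Theorem \ref{th:SSA}. To upgrade this to initiality, I would observe that $A$ is locally approximated by prime dimension drop algebras: each generalized $Z_{\mf p, \mf q}$ in the defining limit is itself an inductive limit of prime $Z_{P_n, Q_n}$ (Convention \ref{def:dim-drop-limit}), so reindexing the resulting double inductive limit exhibits $A$ in this way. Lemma \ref{lem:initial-ssa} then supplies a unital $\Star$-homomorphism $A \to D$ into every strongly self-absorbing $\cstar$-algebra $D$, and uniqueness of such maps up to approximate unitary equivalence is the standard approximate-intertwining consequence of the approximately inner half-flip enjoyed by every SSA $\cstar$-algebra (cf.~\cite[Section 1]{TW}). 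These together make $A$ initial.

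For part (ii), the plan is to invoke the central sequence algebra characterization of $D$-absorption: by \cite[Theorem 2.3]{TW} together with Kirchberg's extension (\cite{K}), for a strongly self-absorbing $\cstar$-algebra $D$ and a separable $\cstar$-algebra $B$, $B \cong B \otimes D$ is equivalent to the existence of a unital $\Star$-homomorphism $D \to F(B)$. This reduces (ii) to producing a unital $\Star$-homomorphism $A \to F(B)$ from the given $Z_{2,3} \to F(B)$. Since $F(B)$ is unital for separable $B$ and $A$ is an inductive limit of prime dimension drop algebras $Z_{P_n, Q_n}$, it suffices to assemble a compatible family of unital $\Star$-homomorphisms $Z_{p,q} \to F(B)$, one for each stage. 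Each one I would build from $Z_{2,3} \to F(B)$ via the chain $Z_{p,q} \to Z_{n, n+1} \to Z_{2^k, 2^k+1}$ (\cite[Proposition 2.5]{JS} and Lemma \ref{lem:dimension-drop-embedding}) together with the Sato-type relations in $Z_{2,3}$ (\cite[Proposition 2.1]{SATO}), which generate the additional elements of $F(B)$ needed to realize the larger prime dimension drop algebras unitally. Compatibility along the inductive system would then be imposed using the asymptotic uniqueness result Lemma \ref{lem:aue-dimension-drop}, now applied to maps into the unital algebra $F(B)$. The hard part will be precisely this bootstrapping step—leveraging the single copy of $Z_{2,3}$ in $F(B)$ to manufacture unital copies of all the larger prime dimension drop algebras, which is essentially the combinatorial heart of Lemma \ref{lem:dimension-drop-embedding} carried out one level up.

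For part (iii), I would reduce to (ii). Given a unital $\Star$-homomorphism $Z_{2,3} \to M(B)_\omega \cap B'$, the natural quotient $M(B)_\omega \cap B' \twoheadrightarrow (M(B)_\omega \cap B')/\T{Ann}(B, M(B)_\omega)$—which for separable $B$ is canonically isomorphic to the paper's $F(B) = B_\omega \cap B' / \T{Ann}(B, B_\omega)$ (cf.~\cite{K})—yields a unital $\Star$-homomorphism $Z_{2,3} \to F(B)$. Applying (ii) then gives $B \cong B \otimes A$.
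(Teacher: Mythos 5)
Parts (i) and (iii) match the paper in substance: for (i) you combine Theorem~\ref{th:SSA} with Lemma~\ref{lem:initial-ssa} as the paper does, and for (iii) you replace the paper's explicit unital $\Star$-homomorphism $M(B)_\omega \cap B' \to F(B)$ (given by $[(x_n)] \mapsto [(x_n h_n)] + \T{Ann}(B,B_\omega)$, with $(h_n)$ an approximate unit) by a citation to Kirchberg's identification of the two descriptions of $F(B)$; both routes are fine, though the paper's is more self-contained.

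For part (ii) there is a real gap. You correctly isolate the goal — upgrade the given unital $\Star$-homomorphism $Z_{2,3} \to F(B)$ to a unital $\Star$-homomorphism $A \to F(B)$ — but the path you sketch for doing this, ``bootstrapping'' from a single copy of $Z_{2,3}$ in $F(B)$ to unital copies of $Z_{p,q}$ for all coprime $p,q$ using the Sato relations, is both unsubstantiated and hard: the Sato relations inside one copy of $Z_{2,3}$ do not by themselves manufacture unital copies of, say, $Z_{5,7}$ in $F(B)$. What makes the paper's proof work is a specific tensorial absorption property of $F(B)$ that you never invoke, namely Kirchberg's \cite[Corollary~1.13]{K}: a unital $\Star$-homomorphism $D \to F(B)$ automatically yields a unital $\Star$-homomorphism $D^{\otimes\infty} \to F(B)$. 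Applied to $D = Z_{2,3}$, and using the unital inclusion $Z_{2^n,3^n} \subseteq Z_{2,3}^{\otimes n}$, this gives unital maps $Z_{2^n,3^n} \to F(B)$ for all $n$ — precisely the prime dimension drop algebras one needs, with no bootstrapping. One then assembles these into a unital map $\overset{\longrightarrow}{Z^{\mu}_{2^\infty,3^\infty}} \to F(B)$ by a one-sided intertwining argument as in \cite[Proposition~2.2]{TWASH}.

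A second, smaller imprecision in your (ii): the central-sequence characterization of absorption for \emph{non-unital} separable $B$ via $F(B)$ is not just \cite[Theorem~2.3]{TW}; the relevant statements (e.g.\ \cite[Proposition~4.11(1)]{K} or \cite[Proposition~5.1]{NAWATA}) require $\K_1$-injectivity of the strongly self-absorbing algebra, which the paper supplies by citing \cite{J}. Without noting this hypothesis your reduction of (ii) to ``unital map $A \to F(B)$ implies $B \cong B\otimes A$'' is not literally correct as stated.
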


\begin{proof}
	(i): By Theorem \ref{th:SSA} it follows that $A$ is strongly self-absorbing. Then Lemma \ref{lem:initial-ssa} shows that $A$ is the initial object in the category of strongly self-absorbing $\cstar$-algebras. (ii): By \cite[Corollary 1.13]{K}, there is a unital \\ $\Star$-homomorphism $Z_{2,3}^{\otimes \infty} \to F(B)$ and hence any dimension drop algebra $Z_{2^n,3^n} \subseteq Z_{2,3}^{\otimes n}$ maps unitally into $F(B)$.  Let $(\mf r, \mf s) \coloneqq (2^\infty, 3^\infty)$. By  a similar argument as in \cite[Proposition 2.2]{TWASH}, it follows that $\overset \longrightarrow {Z_{\mf r, \mf s}^\mu}$ (see Definition \ref{def:APQ}) maps unitally into $F(B)$. By \cite{J}, the algebra $\overset \longrightarrow {Z_{\mf r, \mf s}^\mu}$ is $\K_1$-injective\footnote{The result there is stated for the Jiang-Su algebra $\cZ$, however it only uses that $\cZ$ is a simple inductive limit of prime dimension drop algebras.}. Then the same proof as in \cite[Proposition 5.1]{NAWATA} applies to show that $B \cong B \otimes \overset \longrightarrow {Z_{\mf r, \mf s}^\mu}$. See also \cite[Proposition 4.11 (1)]{K}. By (i) we know that $A \cong \overset \longrightarrow {Z_{\mf r, \mf s}^\mu}$ and hence $B \cong B \otimes A$. (iii): Note that there is a unital $\Star$-homomorphism
	  $$
	  	 M(B)_\omega \cap B' \to F(B) : [(x_n)_{n=1}^\infty] \mapsto [(x_nh_n)_{n=1}^\infty] + \T{Ann}(B,B_\omega) ,
	  $$
	  where $(h_n)_{n=1}^\infty$ is an approximate unit for $B$. It follows that $Z_{2,3}$ maps unitally into $F(B)$ and (ii) applies.
\end{proof}

\begin{remark}
	It is now particularly easy to see that any prime dimension drop algebra $Z_{\mf p, \mf q}$ embeds unitally into any $A$, where $A$ is as in Theorem \ref{thm:structure}. Furthermore, it is encoded in the construction, that these embeddings are standard.
\end{remark}

\bibliographystyle{alpha}
\bibliography{Z}

\end{document}